\theoremstyle{plain}
\newtheorem{theorem}{Theorem}[section]
\newtheorem{corollary}[theorem]{Corollary}
\newtheorem{conj}[theorem]{Conjecture} 
\newtheorem{prop}[theorem]{Proposition}
\newtheorem{lemma}[theorem]{Lemma}
\theoremstyle{definition}
\newcommand{\Z}{\mathbb{Z}}
\newcommand{\Q}{\mathbb{Q}}
\newcommand{\N}{\mathbb{N}}
\newcommand{\R}{\mathbb{R}}
\newcommand{\p}{\mathbb{P}}
\newcommand{\E}{\mathbb{E}}
\newcommand{\indic}{\mathbf{1}}
\newcommand{\fl}[1]{{\left\lfloor #1 \right\rfloor}}
\newcommand{\sset}{\subset}
\newcommand{\lf}{\left}
\newcommand{\rg}{\right}
\newcommand{\mathand}{\;\text{and}\;}
\newcommand{\ga}{\gamma}
\newcommand{\Ga}{\Gamma}
\newcommand{\ep}{\epsilon}
\newcommand{\ka}{\kappa}
\newcommand{\de}{\delta}
\newcommand{\be}{\beta}
\newcommand{\sig}{\sigma}
\newcommand{\la}{\lambda}
\newcommand{\al}{\alpha}
\newcommand{\Rd}{\mathbb{R}^4_\uparrow}
\newcommand{\sA}{\mathcal{A}}
\newcommand{\sF}{\mathcal{F}}
\newcommand{\sL}{\mathcal{L}}
\newcommand{\sR}{\mathcal{R}}
\newcommand{\sS}{\mathcal{S}}
\newcommand{\sT}{\mathcal{T}}
\newcommand{\sY}{\mathcal{Y}}
\newcommand{\fh}{\mathfrak{h}}
\newcommand{\fg}{\mathfrak{g}}
\newcommand{\close}[1]{\mkern 1.5mu\overline{\mkern-1.5mu#1\mkern-1.5mu}\mkern 1.5mu}
\newcommand{\supp}{\text{supp}}
\DeclareMathOperator*{\argmax}{arg\,max}
\newcommand{\eqd}{\stackrel{d}{=}}
\newcommand{\cvgd}{\stackrel{d}{\to}}
\newcommand{\X}{\times}
\newcommand{\cvgdown}{\downarrow}
\renewcommand{\P}{\mathbb{P}}
\title{Non-uniqueness times for the maximizer of the KPZ fixed point}
\author{Duncan Dauvergne}
\begin{document}
	
	\maketitle
	
	\begin{abstract}
	Let $\fh_t$ be the KPZ fixed point started from any initial condition that guarantees $\fh_t$ has a maximum at every time $t$ almost surely. For any fixed $t$, almost surely $\max \fh_t$ is uniquely attained. However, there are exceptional times $t \in (0, \infty)$ when $\max \fh_t$ is achieved at multiple points. Let $\sT_k \sset (0, \infty)$ denote the set of times when $\max \fh_t$ is achieved at exactly $k$ points. We show that almost surely $\sT_2$ has Hausdorff dimension $2/3$ and is dense, $\sT_3$ has Hausdorff dimension $1/3$ and is dense, $\sT_4$ has Hausdorff dimension $0$, and there are no times when $\max \fh_t$ is achieved at $5$ or more points. This resolves two conjectures of Corwin, Hammond, Hegde, and Matetski. 
	\end{abstract}
	
	\section{Introduction}
	
	The KPZ (Kardar-Parisi-Zhang) universality class is a large collection of $1$-dimensional models of random interface growth and $2$-dimensional random metrics. The past twenty-five years have seen a period of intense and fruitful research on this class, with progress propelled by the discovery of a handful of exactly solvable models, including tasep, last passage percolation, directed polymers in a random environment, and the KPZ equation itself. See the books and expository articles \citep{ ferrari2010random,quastel2011introduction, corwin2012kardar, romik2015surprising, borodin2016lectures} and references therein for background on the KPZ universality class and related areas.
	
	Growth models in the KPZ universality class have a height function $t \mapsto h_t, t \ge 0$, which at every time $t$ returns a one-dimensional interface, often simply a continuous function from $\R \to \R$. Typically, the height function is a Markov process in $t$ started from some initial condition $h_0$. In \cite{matetski2016kpz}, Matetski, Quastel, and Remenik identified a limiting continuous-time Markov process for these height functions known as the \textbf{KPZ fixed point}. This object is a fundamental limit within the KPZ universality class, and height functions of all KPZ models should converge to it. This was shown for tasep in \cite{matetski2016kpz} and extended to other integrable models in \cite{nica2020one, dauvergne2021scaling, quastel2020convergence, virag2020heat}.
	
	The domain for the KPZ fixed point is the space of upper semicontinuous functions $h:\R \to \R \cup \{-\infty\}$
	such that $h \not \equiv -\infty$. We write $\fh_t(y; h_0)$ for the value of the KPZ fixed point at time $t \ge 0$ and location $y \in \R$ started from an initial condition $h_0$. The KPZ fixed point is well-defined for all times $t \ge 0$ as long as the initial condition grows sub-parabolically at $\infty$, see \cite{sarkar2020brownian}. Its transition probabilities can be described in terms of a Fredholm determinant, see\cite{matetski2016kpz},  or alternately through a variational problem involving a random metric introduced in \cite{DOV}. Moreover, $\fh_t(y; h_0)$ is real-valued for all $t > 0$, locally Brownian in $y$, and H\"older-$1/3^-$ continuous in $t$ for $t > 0$. 
	 
	 As with many scaling limits in probability, the KPZ fixed point exhibits remarkable fractal geometry.
	 In \cite{corwin2021exceptional}, Corwin, Hammond, Hegde, and Matetski started to investigate one of these fractal aspects related to an old conjecture of Johansson and the uniqueness of fixed point maximizers.
	 
	 \subsection{Johansson's conjecture and non-uniqueness times}
	 
	 While the construction of the KPZ fixed point is quite recent, some of its marginals have been known for much longer. For example, when the initial condition is a narrow wedge $\de_0: \R\to \R$, given by $\de_0(0) = 0$ and $\de_0(x) = -\infty$ for all other $x$, then for every fixed $t$ we have $\fh_t(y; \de_0) = t^{1/3} \sA(t^{-2/3} x) - x^2/t$, where $\sA:\R\to \R$ is an Airy (or Airy$_2$) process.
	 
	 Pr\"ahofer and Spohn \cite{prahofer2002scale} first identified the Airy process as a scaling limit of the polynuclear growth model (PNG). Shortly afterwards, Johansson \cite{johansson2003discrete} proved an analogous convergence statement for a discrete version of PNG. In that paper, Johansson also made the following conjecture.
	 
	 \begin{conj}[Conjecture, 1.5, \cite{johansson2003discrete}]\label{C:johansson}
	 	Almost surely, the parabolically shifted Airy process $x \mapsto \sA(x) - x^2$ attains its maximum at a unique point $Y \in \R$.
	 \end{conj}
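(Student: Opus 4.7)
The plan is to reduce Johansson's conjecture to two ingredients: tail decay of the Airy process, which localizes the maximizer to a random compact set, and the Brownian Gibbs property of the Airy line ensemble, which supplies Brownian absolute continuity on compact intervals.

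First, using the one-point tail bound $\P(|\sA(x)| \ge \la) \le C\exp(-c\la^{3/2})$ (uniform in $x$ by stationarity of $\sA$), together with its H\"older continuity and a Borel-Cantelli argument over integer intervals $[n,n+1]$, one obtains $\sup_{|x| \ge L}(\sA(x) - x^2) \to -\infty$ almost surely as $L \to \infty$. Combined with the a.s.\ finiteness of $\sA(0)$, this shows the maximum is attained and the set of maximizers lies in a random compact set. It thus suffices to prove, for each $L \in \N$, that almost surely the maximum of $\sA(x) - x^2$ on $[-L,L]$ is uniquely attained.

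For this I would invoke the Brownian Gibbs property (Corwin-Hammond): conditionally on $\sA|_{\R \smin (-L,L)}$ and on the second Airy line $\sA_2|_{[-L,L]}$, the top curve $\sA|_{[-L,L]}$ is distributed as a Brownian bridge between $\sA(-L)$ and $\sA(L)$ conditioned to stay above $\sA_2$. This conditional law is mutually absolutely continuous with the unconditioned Brownian bridge. It therefore suffices to verify the classical fact that for a Brownian bridge $B$ and any continuous deterministic function $g$ (here $-x^2$), the maximizer of $B + g$ on $[-L,L]$ is almost surely unique. If two distinct maximizers existed, by continuity there would be disjoint closed rational subintervals $I_1, I_2 \sset [-L,L]$ with $\max_{I_1}(B+g) = \max_{I_2}(B+g)$; but for fixed such a pair with $I_1$ to the left of $I_2$, the Markov property decomposes the difference of these two maxima as a non-degenerate Gaussian increment plus an independent term, so it has a continuous distribution and equals zero with probability zero. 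A countable union bound over rational interval pairs finishes the argument, and a final union bound over $L \in \N$ yields the conjecture.

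The main obstacle is deploying the Brownian Gibbs property rigorously, including ensuring that the two-layer conditioning is well-defined (a genuine subtlety in the Corwin-Hammond framework) and that the resulting Radon-Nikodym derivative transfers uniqueness from Brownian bridge to $\sA$. Given this input the rest is soft probabilistic reasoning. An alternative, more analytic route would use the explicit formula for the density of the Airy argmax derived by Moreno Flores, Quastel, and Remenik from the Fredholm determinant representation of $\sA$; I would favor the Gibbs-based route as it is more robust and closer in spirit to the techniques used elsewhere in this paper.
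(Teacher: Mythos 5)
This statement is Johansson's conjecture, which the paper does not prove: it is presented as a cited conjecture and the paper explicitly attributes its resolution to Corwin and Hammond \cite{CH}, ``by showing that $\sA$ is locally absolutely continuous with respect to Brownian motion of variance $2$'' (with alternate proofs in \cite{pimentel2014location, flores2013endpoint}). Your sketch follows essentially that same route---tail decay to localize the argmax, Brownian Gibbs to get local absolute continuity of $\sA$ with respect to a Brownian bridge, and the classical a.s.\ uniqueness of the argmax of Brownian motion plus a continuous deterministic drift---and is consistent with the cited proof; the subtleties you flag (well-posedness of the two-layer conditioning, strict positivity of the avoidance probability) are real and are precisely what \cite{CH} establishes.
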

 
 Johansson's interest in Conjecture \ref{C:johansson} was in understanding the point-to-line geodesic in a related KPZ model known as geometric last passage percolation. He proved that given Conjecture \ref{C:johansson}, the scaling limit of the endpoint of the point-to-line geodesic in geometric lat passage percolation is given by $Y$. Conjecture \ref{C:johansson} was proven by Corwin and Hammond \cite{CH} by showing that $\sA$ is locally absolutely continuous with respect to Brownian motion of variance $2$\footnote{Here we say a Brownian motion has variance $2$ if its quadratic variation on an interval $[a, b]$ is $2(b-a)$. Throughout the paper, whenever we refer to a Brownian motion (or a Bessel process) it will have variance $2$, as this is the convention for KPZ limits.}. Alternate proofs were found by \cite{pimentel2014location, flores2013endpoint}.

Rephrased in terms of the KPZ fixed point, Johansson's conjecture states that $\fh_1(\cdot; \de_0)$ has a unique argmax almost surely. The same turns out to be true for $\fh_1(\cdot; h_0)$ for any $h_0$ as long as $\fh_1(\cdot; h_0)$ has a maximum. This follows from an extension of the Corwin-Hammond absolute continuity result by Sarkar and Vir\'ag \cite{sarkar2020brownian}, which shows that 
$\fh_1(\cdot; h_0)$ is locally absolutely continuous with respect to Brownian continous for essentially any $h_0$.

A more nuanced situation occurs when we start to vary $t$. Indeed, it turns out that there is a fractal structure to the set of times when the KPZ fixed point attains its maximum at multiple points, first investigated in \cite{corwin2021exceptional}.
 To describe this more precisely, we first restrict to the set of initial conditions $h_0$ such that
\begin{equation}
\label{E:abstract-condition}
\P(\{\fh_t(x; h_0) : x \in \R\} \text{ has a maximum for all } t > 0) = 1.
\end{equation}
This is the set of initial conditions for which it makes sense to consider questions of non-unique maxima. 
It is not difficult to give concrete bounds on the types of initial conditions that satisfy \eqref{E:abstract-condition}. Indeed, Lemma \ref{L:whats-in-the-set} shows that the condition
$$
\limsup_{|x| \to \infty} \frac{h_0(x)}{\log^{2/3}|x|} = - \infty
$$
implies \eqref{E:abstract-condition}, and \eqref{E:abstract-condition} implies that $h_0(x) \to -\infty$ as $|x| \to \infty$. These bounds are not optimal but give a good sense for the types of initial conditions we can work with.

Next, for a KPZ fixed point $\fh = \fh(\cdot; h_0)$ where $h_0$ satisfies \eqref{E:abstract-condition}, let
$$
\argmax \fh_t = \{y \in \R : \max_{x \in \R} \fh_t(x) = \fh_t(y)\}
$$
and for $k \in \N$ define sets of \textbf{non-uniqueness times}
\begin{equation}
\label{E:nonun-times}
\sT_k(h_0) = \{t \in (0, \infty) : |\argmax \fh_t| = k\}, \quad \sT_{\ge k}(h_0) = \{t \in (0, \infty) : |\argmax \fh_t| \ge k\}.
\end{equation} 
In \cite[Theorem 1.3]{corwin2021exceptional}, Corwin, Hammond, Hegde, and Matetski showed that for one-sided initial conditions $h_0$ satisfying a certain parabolic decay condition (Assumption 1.1 in \cite{corwin2021exceptional}), for any $b>0$ we have
\begin{equation}
\label{E:chhm-result}
\P(\dim(\sT_{\ge 2}(h_0) \cap [0, b]) = 2/3 \mid  \sT_{\ge 2}(h_0) \cap [0, b] \ne \emptyset) = 1 \quad \text{ and } \quad \P(\sT_{\ge 2}(h_0) \ne \emptyset) > 0.
\end{equation}
Here and throughout $\dim A$ refers to the Hausdorff dimension of a set $A$. They also showed that $\P(\sT_{\ge 2}(h_0) \cap [0, b] \ne \emptyset) = 1$ when $h_0$ is the narrow wedge and hence that $\P(\dim(\sT_{\ge 2}(h_0) \cap [0, b]) = 2/3) = 1$ in this special case. The heuristics in their paper suggested two natural conjectures regarding the behaviour of the $\sT_k$ that would refine \eqref{E:chhm-result}. These can be loosely summarized as follows.
\begin{conj}[Conjectures 1.5, 1.6, \cite{corwin2021exceptional}]
	\label{C:chhm}
For nice enough initial conditions $h_0$, almost surely $\sT_{\ge 2}(h_0)$ is dense in $[0, \infty)$, $\sT_{\ge 5}(h_0)$ is empty, and $\sT_{2}(h_0), \sT_3(h_0),$ and $\sT_4(h_0)$ have Hausdorff dimensions $2/3, 1/3,$ and $0$ respectively. 
\end{conj}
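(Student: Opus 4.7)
The plan is to exploit the variational formula $\fh_t(y;h_0)=\sup_{x\in\R}[h_0(x)+\sL(x,0;y,t)]$ in the directed landscape $\sL$, and to reduce the entire question to a joint time-regularity statement for $\fh_t$ at finitely many spatial points. Fix a window $[a,b]\sset(0,\infty)$. Near a candidate time $t_0\in\sT_k$, I would select $k$ continuous branches $y_1(t)<\ldots<y_k(t)$ of would-be local maximizers of $\fh_t$ and observe that $t\in\sT_{\ge k}$ iff the $k-1$ height gaps $D_i(t):=\fh_t(y_i(t))-\fh_t(y_{i+1}(t))$ vanish simultaneously. The central technical engine is a \emph{joint Brownian-type comparison in time}: after scale-free normalization, the tuple $(D_1,\ldots,D_{k-1})$ should behave like $k-1$ asymptotically independent H\"older-$1/3^-$ processes, each with zero set of Hausdorff dimension $2/3$ and $O(1)$ one-dimensional densities near zero. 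I would obtain this by lifting the spatial Brownian comparisons of \cite{CH,sarkar2020brownian} to the time direction via a quantitative analysis of geodesic coalescence in $\sL$ near the $k$-tuple of maximizers.

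\textbf{Upper bounds and emptiness of $\sT_{\ge 5}$.} Partition $[a,b]$ into $\sim 2^n$ dyadic intervals $I$ of length $2^{-n}$. If $I\cap \sT_{\ge k}\neq\emptyset$, then the H\"older-$1/3^-$ regularity of $\fh$ in $t$ forces $|D_i|\le C 2^{-n/3}$ at the left endpoint of $I$ for every $i\le k-1$. By the Brownian comparison each $D_i$ at a fixed time has $\P(|D_i|\le C2^{-n/3})\lesssim 2^{-n/3}$, and asymptotic independence yields $\P(I\cap\sT_{\ge k}\neq\emptyset)\lesssim 2^{-n(k-1)/3}$. Summing gives expected bad count $\lesssim 2^{n(4-k)/3}$, hence $\dim\sT_k\le\dim\sT_{\ge k}\le(4-k)/3$ by the standard cover bound. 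For $k\ge 5$ the exponent is strictly negative, the expected counts are summable in $n$, and Borel-Cantelli forces $\sT_{\ge 5}\cap[a,b]=\emptyset$ almost surely; taking $[a,b]$ over a countable exhaustion gives $\sT_{\ge 5}=\emptyset$.

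\textbf{Lower bounds and density.} Since $\sT_{\ge 2}=\sT_2\sqcup\sT_{\ge 3}$ and $\dim\sT_{\ge 3}\le 1/3<2/3$, the CHHM result \eqref{E:chhm-result} immediately gives $\dim\sT_2=2/3$. For the dimensions of $\sT_{\ge 3}$ and $\sT_{\ge 4}$ I would build Frostman measures $\mu_k$ on $\sT_{\ge k}$ as vague limits of $\eps^{-(k-1)}\indic\{|D_1|,\ldots,|D_{k-1}|<\eps\}\,dt$ (suitably normalized); asymptotic independence of the $D_i$ with Hurst-$1/3$-type two-point bounds makes the $\al$-energy $\E\int\!\!\int |t-s|^{-\al}d\mu_k(t)d\mu_k(s)$ finite for all $\al<(4-k)/3$ by a direct second-moment computation, which by the mass distribution principle gives $\dim\sT_{\ge k}\ge (4-k)/3$, matching the upper bound. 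Density of $\sT_{\ge 2}$ in $(0,\infty)$ follows from the Markov property of $\fh$ together with the CHHM positivity on every $[0,\eps]$ by a standard $0$--$1$ argument, and density of $\sT_2$ and $\sT_3$ follows by iterating this with the Frostman measures, which by asymptotic independence of an ``extra'' gap $D_k$ assign zero mass to $\sT_{\ge k+1}$.

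\textbf{Main obstacle.} The hardest and most delicate step is the joint Brownian comparison in time. Corwin--Hammond and Sarkar--Vir\'ag provide spatial absolute continuity of $\fh_t$ with respect to Brownian motion, but the required statement is about the \emph{joint} time-evolution of several height differences near their simultaneous zeros, which is an entirely different regularity problem. Proving it will require a fine analysis of how the backward geodesic tree of $\sL$ coalesces at a $k$-fold maximizing configuration, together with a reparametrization that exposes an approximate Brownian filtration in time. If this comparison can be established with the claimed quantitative density bounds and approximate independence, then the dimension count and Borel-Cantelli arguments sketched above deliver the full theorem; without it, neither the upper nor the lower bounds of the right order seem accessible.
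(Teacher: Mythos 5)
Your high-level skeleton (dyadic cover for upper bounds, Frostman/second-moment for lower bounds) does resemble the paper's, but the single ingredient your proposal leans on everywhere — a ``joint Brownian-type comparison in time'' giving $k-1$ asymptotically independent H\"older-$1/3^-$ gap processes with $O(1)$ one-dimensional densities near zero — is precisely what is \emph{not} available, and the paper says so explicitly: time regularity ``appears to require nuance that is very difficult to access with Brownian comparisons alone, hence the partial use of Fredholm determinants'' (Section 1.4). You identify this as the main obstacle but then treat it as a tractable extension of \cite{CH,sarkar2020brownian}; the paper's actual resolution is a different architecture that avoids ever proving such a statement.

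Concretely, the paper replaces your hypothetical time-comparison by three ideas. First, a transfer principle (Proposition \ref{P:alk}): using absolute continuity from \cite{sarkar2020brownian} together with ergodicity and scale invariance of the Bessel initial condition $-R$, it shows the Hausdorff dimensions and cardinalities of $\sT_{\ge k}(h_0)\cap I$ are deterministic constants independent of $h_0$ and $I$. Your proposal never addresses why the answer is the same ``for nice enough $h_0$''; the transfer principle is what makes it legitimate to prove each bound with a single well-chosen initial condition. Second, for the upper bound, the paper works with $h_0=-R$, where $T_{Y_t}\fh_t$ is \emph{exactly} $-R$ at every fixed $t$ (Theorem \ref{T:Bessel-IC}); this converts the density estimate you need into a concrete Bessel-process calculation (Lemma \ref{L:Bessel-bd}), requiring no independence claim in time, only H\"older-$1/3^-$ continuity. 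Third, for the lower bound, the paper works with $k$ superimposed narrow wedges so that the $k$ competing maxima are genuinely independent at small scales (Proposition \ref{P:landscape-mixing}), and then estimates transition probabilities of $\max\fh_t$ via a Fredholm determinant formula (Proposition \ref{P:kpz-fp-prop}). Crucially, that density bound is \emph{not} $O(1)$: it blows up doubly exponentially as the maximum decreases, which would kill the naive second-moment computation you sketch. The paper tames this by thinning the exceptional set using a law of the iterated logarithm (Lemma \ref{L:iterated-log}) together with a spatial localization estimate (Lemma \ref{L:noniterated-log}) before running the Frostman argument. Your assertion that ``$\P(|D_i|\le C2^{-n/3})\lesssim 2^{-n/3}$'' by Brownian comparison quietly assumes bounded densities and would fail to survive the conditioning implicit in a second-moment estimate; the double-exponential obstruction and the LIL workaround are the heart of the paper's lower bound, and they are missing from your proposal. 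Your density argument (via ``CHHM positivity on $[0,\eps]$ plus a $0$--$1$ argument'') is also not well-founded: \eqref{E:chhm-result} only gives positive (not full) probability of nonemptiness for general $h_0$; the paper instead obtains density from $\be_2=\be_3=\infty$ via the transfer principle.

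Until the joint time-comparison you posit can be established — and the paper's discussion suggests it cannot, at least not by lifting spatial comparisons — your plan does not yield either bound at the correct exponent. The paper's actual route (stationary Bessel IC for the upper bound, narrow wedges plus Fredholm density plus LIL thinning for the lower bound, transfer principle to glue them) is a genuinely different and substantially more indirect argument.
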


The main result of this paper establishes Conjecture \ref{C:chhm}.

\begin{theorem}
	\label{T:main}
	Let $\fh(\cdot; h_0)$ be a KPZ fixed point where $h_0$ satisfies \eqref{E:abstract-condition}. Then almost surely, we have the following assertions:
	\begin{enumerate}[label=\arabic*.]
		\item For any interval $I = [a, b] \sset (0, \infty)$ with $a < b$, we have $\dim(\sT_2(h_0) \cap I) = 2/3$ and $\dim(\sT_3(h_0) \cap I) = 1/3$.
		\item $\dim(\sT_4(h_0)) = 0$ and $\sT_{\ge 5}(h_0) = \emptyset$.
	\end{enumerate}
\end{theorem}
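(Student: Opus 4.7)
The plan is to prove matching upper and lower bounds on the Hausdorff dimensions of the sets
$\sT_{\ge k}(h_0) = \{t \in (0,\infty) : |\argmax \fh_t| \ge k\}$.
Since $\sT_k = \sT_{\ge k} \smin \sT_{\ge k+1}$, the dimension assertions reduce to
\[
\dim (\sT_{\ge k} \cap I) = (4-k)/3 \text{ for } k = 2,3,4 \text{ and every } I \sset (0, \infty), \quad \sT_{\ge 5} = \emptyset.
\]
I first reduce to a compact time interval $[a,b] \sset (0,\infty)$ on which all maximizers of $\fh_t$ lie in a random bounded spatial window $[-R, R]$. The hypothesis \eqref{E:abstract-condition} together with the Markov property applied at time $a > 0$ ensures that $\fh_a$ is locally absolutely continuous with respect to Brownian motion of variance $2$ (Sarkar--Vir\'ag \cite{sarkar2020brownian}), providing the sharp local regularity inputs used below.

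The upper bounds rely on a single counting scheme built on two regularity properties: $\fh_t$ is locally Brownian in space at fixed times, and $t \mapsto \fh_t(y) - \fh_t(y')$ is H\"older-$1/3^-$ in $t$ uniformly for $y, y'$ in a compact set. Partitioning $[a,b]$ into intervals of length $\delta = 2^{-n}$, the existence of a $k$-tie time in an interval $J$ forces, at the left endpoint of $J$, the existence of $k$ points on which $\fh$ agrees up to height $C\delta^{1/3}$. By local Brownianity this event has probability of order $\delta^{(k-1)/3}$, so summing over the $\delta^{-1}$ intervals gives an expected count of order $\delta^{(k-4)/3}$. A standard covering estimate yields $\dim(\sT_{\ge k} \cap [a,b]) \le (4-k)/3$ for $k = 2,3,4$, and for $k = 5$ the exponent is $+1/3$, which is summable in $n$; Borel--Cantelli then forces $\sT_{\ge 5} \cap [a,b] = \emptyset$ almost surely, since any point of $\sT_{\ge 5}$ would lie in a $\delta$-interval meeting $\sT_{\ge 5}$ for every $n$.

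For the lower bounds I would adapt the construction of \cite{corwin2021exceptional} to higher multiplicities. For each $k \in \{2,3,4\}$ and $\eps > 0$, define the random measure on $[a,b]$
\[
d\mu_k^\eps(t) = \eps^{-(k-1)} \mathbf{1}\{\text{there exist } y_1 < \cdots < y_k \text{ with } \max_j \fh_t(y_j) - \min_j \fh_t(y_j) \le \eps\}\, dt.
\]
The first moment $\E[\mu_k^\eps([a,b])]$ is bounded below by a positive constant via Brownian absolute continuity. A matching second moment estimate $\E[\mu_k^\eps(ds)\, \mu_k^\eps(dt)] \lesssim |s-t|^{-(k-1)/3}\, ds\, dt$ yields finite $\alpha$-energy for every $\alpha < (4-k)/3$, so subsequential weak limits as $\eps \to 0$ give a nontrivial measure supported on $\sT_{\ge k} \cap [a,b]$, and Frostman's lemma gives $\dim(\sT_{\ge k} \cap [a,b]) \ge (4-k)/3$. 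Since the same argument works on every subinterval of $(0,\infty)$, density of $\sT_2$ and $\sT_3$ in $(0,\infty)$ is then automatic: for $k \in \{2,3\}$ the lower bound is strictly positive on every subinterval, so $\sT_k = \sT_{\ge k} \smin \sT_{\ge k+1}$ is nonempty in every subinterval.

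The main obstacle is the second moment bound for $k = 3, 4$. While the intuition that a $k$-tie is a codimension-$(k-1)/3$ event is clean, turning it into a quantitative estimate requires control on the joint law of $\fh$ at two nearby times $s, t$ conditional on the existence of $k$ near-maxima at both. This is substantially more delicate than the single-tie case handled in \cite{corwin2021exceptional}. I would approach it by decomposing $[s,t]$ at its midpoint and applying the Markov property to reduce to two essentially independent copies of a shorter-time KPZ fixed point, then combining the spatial Brownian structure near the maximum with quantitative modulus-of-continuity bounds in time to control the conditional densities of the rescaled height profile near a $k$-fold tie.
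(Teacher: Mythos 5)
Your high-level plan—match upper bounds via a covering/counting argument to lower bounds via an energy (Frostman) argument on $\sT_{\ge k}$, for a convenient interval $I$—is the right skeleton, and your reduction from $\sT_k$ to $\sT_{\ge k}$ is correct. But the proposal misses the paper's central organizing idea, and it has two genuine gaps precisely where the hard work is.

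First, the paper does not try to prove the bounds directly for an arbitrary always-decaying $h_0$. Instead it first establishes a \emph{transfer principle} (Proposition \ref{P:alk}): using the Bessel-$3$ initial condition $-R$, whose post-recentering evolution is time-stationary (Theorem \ref{T:Bessel-IC}) and scale-invariant, together with a $0$-$1$ law, one shows that $\dim(\sT_{\ge k}(h_0)\cap I)$ is a deterministic constant $\al_k$ \emph{independent of $h_0$ and $I$}, and $\#(\sT_{\ge k}(h_0)\cap I)$ is a deterministic $\be_k\in\{0,\infty\}$. This reduces the entire theorem to exhibiting one initial condition with the right upper bound and another with the right lower bound. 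Your proposal lacks this reduction, and that is what makes both of your remaining steps break.

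Second, your upper bound argument asserts that ``by local Brownianity'' the event of a near-$k$-tie at the left endpoint of a $\delta$-interval has probability of order $\delta^{(k-1)/3}$. Qualitative absolute continuity with respect to Brownian motion (or Bessel-$3$ near the max, via Theorem \ref{T:bessel}) gives you that this probability tends to $0$, but it does \emph{not} give you the rate, because the Radon--Nikodym derivative is uncontrolled and, more importantly, you would need this estimate \emph{uniformly over the $\delta^{-1}$ time slices}, with constants that do not degrade as you refine. The paper's workaround is to use the Bessel IC, for which the recentered spatial marginal at every fixed time is exactly $-R$; then Lemma \ref{L:Bessel-bd} gives the precise $\ep^{k-1}$ bound, and the transfer principle propagates the result to all $h_0$.

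Third, your lower bound plan—energy method plus ``decompose at the midpoint, use the Markov property and Brownian structure to control conditional densities''—is exactly the route the paper identifies as not accessible by Brownian comparisons alone. The key second-moment input is a bound on the conditional density of $\max\fh_{t+s}$ given the profile at time $t$, and the paper obtains this from a Fredholm-determinant estimate (Proposition \ref{P:kpz-fp-prop}). That estimate has a doubly exponential blow-up as the density argument drops to $-\infty$, and a naive energy argument would not be integrable; the paper fixes this by thinning $A_k$ by the good event $B_{\al,k}$, where a law-of-the-iterated-logarithm bound (Lemma \ref{L:iterated-log}) and a spatial localization of the argmax (Lemma \ref{L:noniterated-log}) restrict the density argument so that the double exponential is harmless. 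The paper also sets up the lower bound with the initial condition $h_k=\de_1\vee\dots\vee\de_k$, which at short times decouples into $k$ independent narrow-wedge copies (Lemma \ref{L:h2-behaviour}), making $A_k$ a level set of independent processes; this decoupling is essential for the first and second moment computations and is absent from your plan. Without the Fredholm density bound and the LIL-based thinning, the second moment estimate you need for $k=3$ does not follow from the tools you cite.
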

Our proof method also goes a bit further towards understanding the fractal structure of the sets $\sT_k(h_0)$ by relating them to shift and scale-invariant ergodic processes, see Lemma \ref{L:Bessel-lemma} and Section \ref{S:bessel}.

One question left open by Theorem \ref{T:main} is whether $\sT_4$ is almost surely empty. I believe this to be the case (as do \cite{corwin2021exceptional}, see the line following Conjecture 1.5 therein), but the tools in this paper are not currently refined enough to handle this border case. Note that we \emph{can} show that either $\sT_4$ is almost surely empty for every $h_0$ satisfying \eqref{E:abstract-condition}, or almost surely dense for every satisfying \eqref{E:abstract-condition}, see Proposition \ref{P:alk}.

Just as Johansson's original conjecture is connected with point-to-line geodesics, so is our Theorem \ref{T:main}. There is a way in which the maximum of the KPZ fixed point can be written as follows:
\begin{equation*}
\max_{y \in \R} \fh_t(y; h_0) = \max_{y \in \R} \max_{x \in \R} \max_{g:(x, 0) \to (y, t)} \|g\|_\sL + h_0(x).
\end{equation*}
Here the rightmost maximum is over all paths $g$ between spacetime points $(x, 0)$ and $(y, t)$, and $\|g\|_\sL$ is the length of $g$ as measured in a certain limiting random directed metric $\sL$, the directed landscape. A function-to-line geodesic (or point-to-line if $h_0$ is the narrow wedge) is any $g$ realizing the above maximum. Theorem \ref{T:main} then describes times when there are function-to-line geodesics with \emph{different} endpoints realizing this maximum. We refer the reader to Section \ref{S:prelim} and \cite{DOV, dauvergne2021scaling} for more detail on this perspective.

A broader question is to describe the set of times when there are multiple function-to-line geodesics realizing this maximum, possibly even with \emph{the same} endpoint. While this question is not explored in this paper, I believe that almost surely for all $t$, any pair of distinct function-to-line geodesics have different endpoints and so Theorem \ref{T:main} still describes Hausdorff dimensions in this broader setting.

\subsection{Proof sketch}
\label{S:proof-technique}
Unsurprisingly, the proof of Theorem \ref{T:main} can be broken into two distinct parts:
\begin{enumerate}[nosep, label=(\roman*)]
	\item Upper bounds on the Hausdorff dimension and size of the sets $\sT_{\ge k}(h_0)$ for $k = 2, 3, 4, 5$.
	\item Lower bounds on the Hausdorff dimension of the sets $\sT_{\ge k}(h_0)$ for $k = 2, 3$.
	\end{enumerate}
When thinking about (i) and (ii), I found 
that some initial conditions lent themselves to proving upper bounds and some lent themselves to proving lower bounds (and most initial conditions resisted any clean analysis at all). Because of this, a key ingredient in the paper is a transfer principle for going between initial conditions, based on the Brownian absolute continuity result of \cite{sarkar2020brownian}. 

Let us explain how the transfer principle works in a simpler setting. Let $B$ be a Brownian motion, let $A$ be a set of continuous functions, and suppose that by some $0$-$1$ law we can show that $\P(B \in A) \in \{0, 1\}$. If we can then find some process $Y$ which is absolutely continuous with respect to Brownian motion and for which $\P(Y \in A) > 0$, then $\P(X \in A) = 1$ for all processes $X$ which are absolutely continuous with respect to Brownian motion. In the setting of the KPZ fixed point there are additional technical complications, but nonetheless we can use \cite{sarkar2020brownian} to show the following for many sets $A$:
\begin{equation}
\label{E:transfer}
\text{If $\P(\fh_\cdot(\cdot; h_0) \in A) > 0$  from some $h_0$, then $\P(\fh_\cdot(\cdot; h_0) \in A) = 1$ for all $h_0$}.
\end{equation}
We prove a version of \eqref{E:transfer} tailored to our setting in Proposition \ref{P:alk}. Because of \eqref{E:transfer}, to prove our main theorems it will suffice to prove each of the statements (i) and (ii) for exactly one choice of $h_0$.

As is typical, the proof of the upper bound (i) is easier. In this case, we work with the random initial condition $h_0 = -R$, where $R$ is a two-sided Bessel-$3$ process. The advantage of this choice is that the evolution of $\fh_t(\cdot; -R), t \ge 0$ is stationary in $t$ after recentering at the maximum, see Theorem \ref{T:Bessel-IC}. Because of this, the upper bound reduces to a simple bound on a standard Bessel-$3$ process and a H\"older-$1/3^-$ continuity estimate for the KPZ fixed point. Moreover, $\fh_t(\cdot; - R)$ has scale invariance and ergodicity properties that allow us to prove the kind of $0$-$1$ laws that underlie the transfer principle \eqref{E:transfer}.

The proof of the lower bound (ii) is much harder. Here, the correct choice is to let $h_0$ consist of a sum of $k$ narrow wedges, where $k = 2, 3$. At small time scales, $\fh_t(\cdot; h_0)$ splits into $k$ independent narrow wedges, which will create competing maxima that evolve independently. The proof then boils down to estimating the transition probability  from time $t$ to time $t + s$ for the maximum of $\fh_t$ started from a single narrow wedge.

This is the sort of task for which the Fredholm determinant formula from \cite{matetski2016kpz} is ideally suited. However, there turns out to be a major difficulty here in that the estimate coming from the Fredholm determinant formula increases as a double exponential in one parameter direction.
To deal with this, we find a way of thinning out the sets $\sT_{\ge k}(h_0)$ to mitigate the effect from this weak bound. It turns out that a law of the iterated logarithm for the KPZ fixed point can be used in the thinning to just about nullify the effects of the double exponential, see Section \ref{S:lower} for details.

We note that our method is quite different than the method used in \cite{corwin2021exceptional} to prove \eqref{E:chhm-result}. In \cite{corwin2021exceptional}, the computations required for bounding Hausdorff dimension are tackled fairly directly. In this paper we take a more indirect route, using the ideas discussed above (e.g. transfer principle, finding a stationary process, set thinning) to shift difficulty away from the final computations. One advantage of this approach is that because the final computations are straightforward, we are able to strengthen \eqref{E:chhm-result} and prove Theorem \ref{T:main} without additional work.

\subsection{Some related work} This paper follows a recent line of research based around exploiting and understanding the locally Brownian structure of the Airy process $\sA$ and the KPZ fixed point $\fh_t$. For example, quantitative Brownian comparisons from \cite{dauvergne2021bulk} underlie the construction of the directed landscape \cite{DOV}.
Many other recent results about the KPZ fixed point, the directed landscape, and related objects, e.g. see \cite{hammond2020exponents, ganguly2020stability, basu2021temporal, dauvergne2021disjoint}, have been proven with the aid of quantitative comparisons between $\sA$ and Brownian motion from \cite{hammond2016brownian} and \cite{CHH20}. 
 The absolute continuity for $\fh_t$ from \cite{sarkar2020brownian} seems harder to quantify, but
nonetheless has proved useful for showing qualitative results, e.g. see \cite{dauvergne2020three}.

This paper and \cite{corwin2021exceptional} are also not the first to study fractal geometry in a KPZ limit. For example, \cite{basu2021fractal, bates2019hausdorff} use Brownian absolute  continuity results to study fractal geometry in the directed landscape. One major difference between the problem studied here and in \cite{corwin2021exceptional} and those previous works is that here, we are trying to understand what happens as the \emph{time} coordinate changes. This appears to require nuance that is very difficult to access with Brownian comparisons alone, hence the partial use of Fredholm determinants both here and in \cite{corwin2021exceptional}.

Transferring results from one initial condition to others is also a frequently used idea in KPZ, though not in the exact guise of \eqref{E:transfer}. For example, in many models it is possible to show that on small scales, height profiles from different initial conditions agree. This idea underlies the patchwork quilt description of the KPZ fixed point from \cite{hammond2019patchwork} and the papers \cite{pimentel2018local, balazs2021local, busani2020universality} which extend bounds and geodesic properties from the stationary Brownian initial condition to more general ones.  
\subsection{Brief outline of the paper}

Section \ref{S:prelim} introduces definitions and results required in the paper. Sections \ref{S:ic-abs-cont} introduces always decaying initial conditions and provides absolute continuity estimates that build on \cite{sarkar2020brownian}. Section \ref{S:bessel} focuses on the Bessel initial condition, its stationarity and ergodicity properties, and how these imply a transfer principle for Hausdorff dimension. Section \ref{S:upper} then proves the upper bounds required for Theorem \ref{T:main} and Section \ref{S:lower} proves the lower bound, up to a technical proposition that we leave for Section \ref{A:fredholm-properties}.
	
	\section{Preliminaries}
	\label{S:prelim}
	
	\subsection{The KPZ fixed point and the directed landscape}
	The KPZ fixed point can be described in two ways. The original description in \cite{matetski2016kpz} is through a Fredholm determinant formula. We will only require this formulation for the proof of one key proposition in the paper, so we will set this description aside for now and return to it when it is required in Section \ref{A:fredholm-properties}.
	
	For the most part, we will understand the KPZ fixed point by appealing to a variational formula in terms of another KPZ limit, the directed landscape.
	The \textbf{directed landscape} is a random continuous function $\sL:\Rd \to \R$, where $\Rd := \{(x, s; y, t) \in \R^4: s < t\}$. It is best thought of as a metric on the spacetime plane, where $\sL(p; q)$ is thought of as a distance from $p = (x, s)$ to $ q = (y, t)$. However, it is not a true metric: it is not necessarily positive, it is not symmetric, and it satisfies the triangle inequality backwards:
	\begin{equation}
	\label{E:triangle-ineq}
	\sL(p; q) \le \sL(p; r) + \sL(r; q).
	\end{equation}
	For any initial condition $h_0$, the KPZ fixed point $\fh(\cdot; h_0)$ can be expressed using a variational problem involving $\sL$:
	\begin{equation}
	\label{E:hopf-lax}
	\fh_t(y) = \sup_{x \in\R} h_0(x) + \sL(x, 0; y, t).
	\end{equation}
	Unless otherwise stated, we always assume $\fh_t$ and $\sL$ are related by \eqref{E:hopf-lax}.
	
	The directed landscape is built from a random continuous function $\sS:\R^2 \to \R$ known as the \textbf{Airy sheet} whose explicit description we do not require in this paper. The directed landscape is the unique function (in distribution) satisfying the following almost sure properties:
	\begin{enumerate}[label=\Roman*.]
		\item (Metric composition) For any $s < r < t$ and $x, y \in \R$ we have
		$$
		\sL(x, s; y, t) = \max_{z \in \R} \sL(x, s; z, r) + \sL(z, r; y, t).
		$$
		\item (Independent increments) For any disjoint intervals $(s_i, t_i), i = 1, \dots, k$, the random functions $\sL(\cdot, s_i; \cdot, t_i):\R^2 \to \R, i = 1, \dots, k$ are independent.
		\item (Airy sheet marginals) For any $t \in \R, s > 0$ we have
		$$
		s^{-1}\sL(s^2x, t; s^2y, t + s^3) \eqd \sS(x, y),
		$$
		where the equality in distribution is as a function of $x$ and $y$.
	\end{enumerate}
Like many scaling limits, the directed landscape has many symmetries that we will use throughout the paper, frequently without reference.
\begin{prop}[Lemma 10.2, \cite{DOV}]
		\label{P:land-props}
		We have the following equalities in distribution as random continuous functions from $\Rd \to \R$. Here $r, z \in \R$, and $q > 0$.
		\begin{enumerate}[nosep, label=\arabic*.]	
			\item KPZ scale invariance:
			$
			\sL(x, t ; y, t + s) \eqd  q \sL(q^{-2} x, q^{-3}t; q^{-2} y, q^{-3}(t + s)).
			$
			\item Time stationarity:
			$
			\sL(x, t ; y, t + s) \eqd \sL(x, t + r ; y, t + s + r).
			$
			\item Spatial stationarity:
			$
			\sL(x, t ; y, t + s) \eqd \sL(x + z, t; y + z, t + s).
			$
			\item Skew stationarity:
			$$
			\sL(x, t ; y, t + s) \eqd \sL(x + zt, t; y + zt + sz, t + s) + s^{-1}[(x - y)^2 - (x - y - sz)^2].
			$$
		\end{enumerate}
\end{prop}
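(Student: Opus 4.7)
My plan is to invoke the uniqueness in distribution of the directed landscape as the random continuous function characterized by properties I (metric composition), II (independent increments), and III (Airy sheet marginals), stated immediately before the proposition. For each of the four identities, denote the right-hand side by $\tilde\sL$, viewed as a random continuous function of $(x, s; y, t) \in \Rd$. I would show that $\tilde\sL$ satisfies I--III; uniqueness then forces $\tilde\sL \eqd \sL$.

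The verifications in cases (1)--(3) are essentially routine. Each of these transformations is an affine change of the time and space variables, composed in (1) with a scalar factor $q$. Property I is preserved by the corresponding change of variable in the intermediate maximum; property II is preserved since disjoint time intervals remain disjoint under time shifts/rescalings; property III reduces to the $t$-independence of the Airy sheet marginal identity in (2), to spatial stationarity of $\sS$ in (3), and to the scaling identity $r^{-1}\sL(r^2 x, 0; r^2 y, r^3) \eqd \sS(x,y)$ applied at the rescaled time increment in (1).

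Case (4) requires more care. The parabolic correction simplifies to $s^{-1}[(x-y)^2 - (x-y-sz)^2] = 2z(x-y) - sz^2$. To verify property I for $\tilde\sL$ under composition at intermediate time $t+u$ with $u \in (0, s)$ and intermediate point $w$, substitute $w' = w + z(t+u)$ so that the two $\sL$-pieces align with the shifted coordinates, and compute the sum of the two parabolic corrections:
\[
\bigl[2z(x-w) - uz^2\bigr] + \bigl[2z(w-y) - (s-u)z^2\bigr] = 2z(x-y) - sz^2,
\]
which is independent of $w$ and equals the single parabolic correction on the other side. Hence the intermediate maximum over $w$ reduces, after the substitution, to property I for $\sL$ applied to the shifted endpoints. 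Property II is immediate since the transformation acts separately on each time strip. For property III, I would appeal to a corresponding skew symmetry of the Airy sheet with matching quadratic correction, a known consequence of the parabolic invariance of the Airy$_2$ line ensemble.

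The main obstacle is the algebra in case (4): one must verify both that the parabolic correction telescopes cleanly under composition (the display above) and that it matches the Airy sheet's own skew symmetry at the level of marginals. Once that telescoping identity is in hand, the rest of the verification reduces to direct substitutions using the corresponding symmetries of $\sS$, after which uniqueness finishes the proof.
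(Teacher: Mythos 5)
This proposition is quoted directly from \cite{DOV} (Lemma 10.2); the present paper gives no proof of its own, so there is nothing in the paper to compare against. Your plan — verify that the transformed field satisfies the three defining properties I–III and then invoke the uniqueness of the directed landscape — is the natural route, and is essentially how the symmetries are organized in \cite{DOV} as well. Your check of property~I in case~(4), in particular the telescoping identity $[2z(x-w) - uz^2] + [2z(w-y) - (s-u)z^2] = 2z(x-y) - sz^2$, is correct and is the one genuinely non-obvious computation.

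Two caveats. First, your verification of property~III in cases~(3) and~(4) hinges on symmetries of the Airy sheet $\sS$ — diagonal stationarity $\sS(x+c, y+c) \eqd \sS(x,y)$ and a shear symmetry of $\sS$ in one coordinate — that this paper deliberately does not introduce ("whose explicit description we do not require in this paper"). Those Airy-sheet symmetries are exactly where the substance lies, and in \cite{DOV} they are established separately from a Brownian last-passage prelimit; citing them as "a known consequence of the parabolic invariance of the Airy$_2$ line ensemble" is acceptable as a sketch, but you should be aware that the hard content has been pushed into that citation rather than eliminated. Second, a one-point sanity check against the stated marginal $\sL(x,t;y,t+s) \eqd s^{1/3}\sL(0,0;0,1) - (x-y)^2/s$ shows that, as printed, the correction in item~4 appears to have the wrong sign: matching one-point laws requires $s^{-1}[(x-y-sz)^2 - (x-y)^2]$ rather than $s^{-1}[(x-y)^2 - (x-y-sz)^2]$. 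Your telescoping works for either sign, but when you come to match against the Airy-sheet skew symmetry in property~III you will need the corrected sign, so it is worth flagging.
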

In particular, these symmetries imply that for any fixed $x, s, y, t$ we have $\sL(x, t; y, t + s) \eqd s^{1/3}\sL(0,0; 0, 1) - (x-y)^2/s$. The random variable $\sL(0,0; 0, 1)$ is a standard GUE Tracy-Widom random variable; this goes back to \cite{baik1999distribution, johansson2000shape}. 

In order to more easily follow arguments in the paper, the reader should also have a good sense of the rough shape of the directed landscape and some of its continuity properties. Propositions \ref{P:corollary107} and \ref{P:corollary107-airysheet} record estimates on how close the directed landscape and Airy sheet are to their expected parabolic shapes. Proposition \ref{P:modulus-bound} records a continuity estimate for $\sL$.

\begin{prop}[Corollary 10.7, \cite{DOV}]
\label{P:corollary107}
	For all $u = (x, t; y, t + s) \in \R^4_\uparrow$, we have
	$$
	\lf|\sL(x, t; y, t + s) + \frac{(x - y)^2}{s} \rg|\le C s^{1/3} \log^{4/3}\lf(\frac{2(\|u\|_2 + 2)^{3/2}}{s}\rg)\log^{2/3}(\|u\|_2 + 2).
	$$
	Here
	$C > 0$ is a random constant with $\E a^{C^{3/2}} < \infty$  for some $a > 1$.
\end{prop}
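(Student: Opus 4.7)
The plan is to reduce the uniform estimate to a one-point tail bound via the symmetries of $\sL$, then upgrade to the uniform statement through a multi-scale chaining argument.

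First, by time and spatial stationarity together with KPZ scale invariance with $q = s^{1/3}$ (parts 1--3 of Proposition \ref{P:land-props}), for fixed $u = (x, t; y, t+s)$ one has
$$\sL(x, t; y, t+s) + \frac{(x-y)^2}{s} \eqd s^{1/3}\bigl[\sL(0, 0; z, 1) + z^2\bigr], \quad z = \frac{y-x}{s^{2/3}}.$$
The right-hand side is $s^{1/3}$ times a parabolically shifted Airy sheet value at a single pair of points. Classical GUE Tracy--Widom estimates, applied to $\sL(0,0;0,1)$ and transferred via stationarity of the parabolic Airy process, give a uniform one-point bound of the form $\P(|\sL(0,0;z,1) + z^2| > M) \le C_1 \exp(-c_1 M^{3/2})$ for all $z \in \R$ and $M \ge M_0$. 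This $3/2$-power tail is the key ingredient driving the integrability $\E a^{C^{3/2}} < \infty$.

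To make the estimate uniform in $u$, I would partition $\Rd$ into dyadic shells indexed by pairs $(m,n)$ with $\|u\|_2 \in [2^m, 2^{m+1})$ and $s \in [2^{-n}, 2^{-n+1})$. Within each shell I would lay down a grid of centers whose spacing is fine enough that the modulus of continuity estimate for $\sL$ (Proposition \ref{P:modulus-bound}) controls the supremum of $|\sL(\cdot) + \text{parabola}|$ over a cell in terms of the value at the center, up to a negligible error. The number $N_{m,n}$ of grid centers is polynomial in $2^m$ and $2^n$; a union bound of the one-point estimate then shows the $s^{1/3}$-normalized shell-supremum exceeds $K(\log N_{m,n})^{2/3}$ with probability at most $N_{m,n}^{-c K^{3/2}}$. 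A Borel--Cantelli argument over all $(m,n)$ produces the desired random constant $C$, whose $3/2$-power tail is inherited directly from the one-point bound.

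The two logarithmic factors have natural origins: by KPZ scaling, a spacetime window of extent $\|u\|_2$ has characteristic time scale $\|u\|_2^{3/2}$, so the ratio $(\|u\|_2 + 2)^{3/2}/s$ measures the number of ``natural scales'' of size $s$ fitting inside the window. The exponent $4/3 = 2\cdot(2/3)$ reflects the product of a $\log^{2/3}$ union-bound factor within one dyadic shell and an extra logarithmic factor from summing over $s$-scales, while $\log^{2/3}(\|u\|_2 + 2)$ is the pure spatial-range contribution. The main obstacle is getting these exponents exactly right: coarser versions of this argument easily give a bound of the correct order of growth, but matching the sharp $4/3$ and $2/3$ requires careful tuning of the grid mesh to both the KPZ space scale $s^{2/3}$ and time scale $s$, together with a matching-strength modulus of continuity estimate on $\sL$.
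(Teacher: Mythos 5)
The statement you are trying to prove is not actually proved in this paper at all; it is a direct citation to \cite{DOV} (Corollary 10.7 there), so there is no ``paper's own proof'' to compare against. Your blind reconstruction of the external argument is nonetheless worth assessing.

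Your high-level strategy --- reduce via KPZ scaling and stationarity to a one-point tail bound of the form $\P(|\sL(0,0;z,1)+z^2|>M)\le c\,e^{-d M^{3/2}}$, then upgrade to a uniform statement by chaining over a grid within dyadic shells and applying Borel--Cantelli --- is the right idea, and it does match the broad structure used in \cite{DOV}. However, there is a concrete gap in the way you invoke Proposition \ref{P:modulus-bound}. That modulus estimate holds on a fixed compact set $K$ with a random constant $C_K$ whose tail constant $a$ is itself $K$-dependent; as you move out to shells with $\|u\|_2\sim 2^m$ and $s\sim 2^{-n}$, the compact sets on which you apply the modulus bound grow, and the effective constants $C_K$ degrade. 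You treat Proposition \ref{P:modulus-bound} as if it gave you a single uniform constant for free, but the quantitative degradation of $C_K$ in $m$ and $n$ is precisely what produces the $\log^{4/3}$ factor; without tracking it, your union-bound bookkeeping only yields a $(\log N_{m,n})^{2/3}\sim(m+n)^{2/3}$ fluctuation, which is noticeably \emph{smaller} than the right-hand side $\asymp(m+n)^{4/3}m^{2/3}$ in the target estimate. That discrepancy should have been a warning sign that a contribution is being dropped.

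The second issue is that you explicitly concede you have not carried out the exponent-matching, and you offer a heuristic ($4/3=2\cdot\tfrac{2}{3}$) in its place. Because the exponents are the entire content of the statement, this is not a minor loose end; it is the proof. In short: the reduction-plus-chaining skeleton is sound and appropriate, but as written the argument neither accounts for the $K$-dependence of the modulus constant nor carries the exponent computation through, so it does not yet constitute a proof of the stated bound.
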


\begin{prop}[Lemma 5.3, \cite{dauvergne2020three}]
\label{P:corollary107-airysheet}
There exists a constant $c > 0$ such that for all $x, y \in \R$ we have
\begin{equation*}
|\sS(x,y)+(x-y)^2|\le C+c\log^{2/3}(2+|x|+|y|).
\end{equation*}
	Here
$C > 0$ is a random constant with $\E a^{C^{3/2}} < \infty$  for some $a > 1$.
\end{prop}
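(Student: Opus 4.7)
The plan is to combine the sharp one-point tail of the Airy sheet with a modulus of continuity estimate via a multiscale union bound. Set $F(x,y) := \sS(x,y) + (x-y)^2$. The one-point input is that for every fixed $(x,y)$, the random variable $F(x,y)$ has the standard GUE Tracy--Widom law (equivalently $F(x,y) \eqd \sL(0,0;0,1)$), so that
\begin{equation*}
\P(F(x,y) > u) \le C_1 \exp(-c_1 u^{3/2}) \mathand \P(F(x,y) < -u) \le C_2 \exp(-c_2 u^3)
\end{equation*}
for all $u \ge 0$. The exponent $2/3$ in the target bound is dictated precisely by the exponent $3/2$ in this upper tail, via the standard calculation $u^{3/2} \sim \log N \Rightarrow u \sim (\log N)^{2/3}$.

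For $R \ge 2$, I would cover $[-R,R]^2$ by a grid of spacing $\de = R^{-A}$ for a large fixed $A > 0$, yielding $O(R^{2(1+A)})$ grid points. Applying the upper tail at threshold $u = K(\log R)^{2/3}$ and a union bound give
\begin{equation*}
\P\lf(\max_{(x,y) \in \text{grid}} F(x,y) > K (\log R)^{2/3}\rg) \lesssim R^{2(1+A)} \exp(-c_1 K^{3/2} \log R),
\end{equation*}
which is summable over dyadic $R$ once $K$ is large. The lower tail is treated identically, with easier constants thanks to the $u^3$ exponent. To fill in between grid points I would invoke a H\"older-$(1/2-\eta)$ modulus of continuity for $\sS$ on $[-R,R]^2$, with random prefactor polynomial in $R$ and stretched-exponential tails (obtainable e.g.\ from Kolmogorov's criterion using two-point bounds on $\sS$, or directly from the chaining estimates in \cite{DOV}). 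Over a grid cell of diameter $R^{-A}$, the local oscillation is then $O(R^{-A(1/2-\eta)+o(1)})$, which is much smaller than $(\log R)^{2/3}$ once $A$ is chosen large enough.

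A Borel--Cantelli argument over dyadic rings $|x| + |y| \asymp 2^n$ then packages these estimates into the claimed almost sure bound $|F(x,y)| \le C + c \log^{2/3}(2 + |x| + |y|)$. The finiteness of $\E a^{C^{3/2}}$ for some $a > 1$ follows by tracking tail probabilities: since the deviation at level $u = K(\log R)^{2/3}$ decays like $\exp(-c_1 K^{3/2} \log R)$, the induced one-sided deviations of $C$ of size $t$ occur with probability $\exp(-c t^{3/2})$, which is exactly the stretched-exponential integrability claimed. The main obstacle, and the step requiring genuine care, is ensuring the modulus of continuity used off the grid is sharp enough not to leak a worse logarithmic exponent than $2/3$ into the final bound; in particular the general directed-landscape estimate of Proposition \ref{P:corollary107} would only deliver a $\log^2$ bound, and a regularity input more specific to the fixed-time Airy sheet is required.
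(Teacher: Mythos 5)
This proposition is not proved in the paper; it is cited verbatim from Lemma 5.3 of \cite{dauvergne2020three}, so there is no in-paper argument to compare against. Your strategy is the standard one for this type of shape theorem and is very plausibly the route taken in the cited source: set $F = \sS + (x-y)^2$, use the one-point Tracy--Widom tails $\P(F > u) \lesssim e^{-c u^{3/2}}$, $\P(F<-u)\lesssim e^{-cu^3}$, take a union bound over a fine grid on dyadic annuli $|x|+|y|\asymp 2^n$ at threshold $K n^{2/3}$, fill in off-grid with a H\"older modulus, and apply Borel--Cantelli, with the stretched-exponential integrability of $C$ read off by re-running the union bound at shifted thresholds $t + c' n^{2/3}$. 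You also correctly identify why the exponent is $2/3$, and why Proposition~\ref{P:corollary107} cannot be a black-box input: its bound degrades to $\log^2$ even at unit time separation, whereas the $\log^{4/3}$ factor there is paying for uniformity in the time coordinate and must be absent for the fixed-time Airy sheet.

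Two points deserve more care than your sketch gives them. First, when you pass from grid points to the whole annulus you should control oscillation of the recentered field $F$, not $\sS$ itself: $\sS$ has Lipschitz constant $\Theta(R)$ on $[-R,R]^2$ from the parabola, so a raw H\"older bound on $\sS$ would not beat the grid spacing without explicitly subtracting $(x-y)^2$ first. Second, your claim that the cell oscillation is $O(R^{-A(1/2-\eta)+o(1)})$ tacitly assumes the random H\"older prefactor on $[-R,R]^2$ grows sub-polynomially in $R$; what one actually has from Kolmogorov/chaining on a box of side $R$ is a prefactor with stretched-exponential tails whose location may grow polynomially in $R$, say like $R^{B}$. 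This is still fine --- take $A > 2B/(1-2\eta)$ --- but the inequality should be stated with that prefactor present, and its tails need to be tracked when you prove $\E a^{C^{3/2}}<\infty$, since $C$ is a supremum over both the grid-point deviations and the off-grid oscillations. With those adjustments the argument closes.
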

\begin{prop}[Proposition 1.6, \cite{DOV}]
	\label{P:modulus-bound}
Let $\sR(x,t;y,t+s)=\sL(x,t;y,t+s) + (x-y)^2/s$ denote the stationary version of the directed landscape. Let $K\subset \R^4_\uparrow$ be a compact set. Then
$$
|\sR(u) - \sR(v)| \le C_K \lf(\tau^{1/3}\log^{2/3}(\tau^{-1}+1) + \xi^{1/2}\log^{1/2}(\xi^{-1}+1) \rg)
$$
for all points $u = (x, s; y, t), v = (x', s'; y', t') \in K$ where $\xi := \|(x, y) - (x', y')\|_2, \tau := \|(s, t) - (s', t')\|_2$. Here
$C_K > 0$ is a random constant depending on $K$ with $\E a^{C_K^{3/2}} < \infty$  for some $a > 1$.
\end{prop}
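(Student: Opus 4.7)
The plan is to establish the two-parameter modulus of continuity by combining pointwise increment bounds with a chaining argument, with the temporal and spatial fluctuations treated separately and then combined via the triangle inequality in $\Rd$.

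First I would reduce to one-variable estimates. Given $u = (x,s;y,t)$ and $v = (x',s';y',t')$ in $K$, I would split $\sR(u) - \sR(v)$ into a purely spatial increment $\sR(u) - \sR(x',s;y',t)$ (at fixed times $s,t$) and a purely temporal increment $\sR(x',s;y',t) - \sR(v)$ (at fixed endpoints $x',y'$), and bound each. The spatial piece is controlled by the known local Brownian regularity of the Airy sheet: by the Airy sheet marginal property (III) combined with KPZ scale invariance, $\sR(\cdot,s;\cdot,t)$ equals $(t-s)^{1/3}\sS$ up to an affine change of variables, and $\sS$ is known to be locally absolutely continuous with respect to Brownian motion in each variable. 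This yields a pointwise Gaussian tail $\P(|\sR(u)-\sR(x',s;y',t)| \ge \lambda \xi^{1/2}) \le C\exp(-c\lambda^2)$ for a single pair at spatial distance $\xi$, on the event that the involved points lie in a bounded region, which we can quantify using the parabolic tail estimate Proposition \ref{P:corollary107-airysheet}.

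For the temporal piece, I would use the metric composition property (I) together with KPZ scale invariance (Proposition \ref{P:land-props}(1)). Writing $\sL(x',s;y',t') = \max_{z} [\sL(x',s;z,s') + \sL(z,s';y',t')]$ when $s < s' < t'$, the maximizer $z^\star$ concentrates around $x' + (y'-x')(s'-s)/(t'-s)$ at scale $(s'-s)^{2/3}$, and $\sL(x',s;z,s')$ fluctuates on scale $(s'-s)^{1/3}$. Truncating the supremum to $|z-z^\star| \le M(s'-s)^{2/3}$ using the quantitative parabolic estimate of Proposition \ref{P:corollary107} and applying the Airy-sheet tail bound on the truncated range gives a pointwise estimate $\P(|\sR(x',s;y',t) - \sR(v)| \ge \lambda \tau^{1/3}) \le C\exp(-c\lambda^{3/2})$, where the exponent $3/2$ reflects the need to union-bound over a $\tau^{2/3}$-scale window of intermediate points and matches the $a^{C_K^{3/2}}$ tail in the conclusion.

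Second, I would upgrade these pointwise estimates to a uniform modulus by a dyadic chaining argument over $K$. Discretize $K$ at spatial scales $2^{-n}$ and temporal scales $2^{-m}$ independently. On scale $(2^{-n},2^{-m})$ there are $O(2^{4n+2m})$ pairs to control; the pointwise stretched-exponential tails above allow me to take $\lambda$ of order $\log^{1/2}(1/\xi)$ on the spatial side and $\log^{2/3}(1/\tau)$ on the temporal side (since $\lambda^{3/2} \sim \log(1/\tau)$), and a Borel--Cantelli argument produces the random constant $C_K$ with the claimed $3/2$-exponential moment. Summing the telescoping increments along the chain from $u$ to $v$ through the dyadic grid produces exactly the bound $C_K(\tau^{1/3}\log^{2/3}(\tau^{-1}+1) + \xi^{1/2}\log^{1/2}(\xi^{-1}+1))$.

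The main obstacle will be the tail bound on temporal increments: unlike the spatial case, Brownian absolute continuity is not directly available, so one must carefully control the variational $\max_z$ uniformly over $z$ lying in a region much larger than the typical $(s'-s)^{2/3}$ scale, using a combination of Proposition \ref{P:corollary107} to suppress the parabola outside this window and Proposition \ref{P:corollary107-airysheet} for the stationary fluctuations inside. Getting the precise $3/2$-tail (rather than, say, a weaker subexponential tail) is what ultimately makes the chaining produce the sharp logarithmic exponents $2/3$ and $1/2$ matching the Hölder exponents $1/3$ and $1/2$.
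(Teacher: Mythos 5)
This proposition is cited from \cite{DOV} (Proposition 1.6 there), so this paper contains no proof to compare against; the relevant comparison is with the argument in \cite{DOV}, which is indeed a chaining argument combining spatial and temporal increment tails. Your high-level outline — split into a spatial increment and a temporal increment, get pointwise stretched-exponential tails, then chain dyadically — matches that proof in structure and correctly identifies that the $\log^{1/2}$ vs.\ $\log^{2/3}$ corrections should come from Gaussian vs.\ $3/2$-exponential tails.

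However, two of your pointwise bounds are not justified as written, and both are exactly the kind of gap that sinks a chaining proof. First, local absolute continuity of $\sS$ with respect to Brownian motion does \emph{not} by itself give the Gaussian tail $\P(|\sR(u)-\sR(x',s;y',t)| \ge \lambda \xi^{1/2}) \le C\exp(-c\lambda^2)$. Absolute continuity is a qualitative statement; to extract a tail you need quantitative control on the Radon--Nikodym derivative, and even the strong quantitative comparison of \cite{CHH20} gives an $L^p$ bound, not an $L^\infty$ bound, so the resulting tail picks up a polynomial correction. In \cite{DOV} the spatial tail estimate is instead obtained directly (via the Brownian-Gibbs structure of the Airy line ensemble and last passage across it, or from prelimiting models), not by invoking absolute continuity. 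Second, your explanation that the temporal $3/2$ exponent ``reflects the need to union-bound over a $\tau^{2/3}$-scale window'' is not correct: a union bound over polynomially many intermediate points changes only the prefactor, not the exponent in the exponential. The $3/2$ exponent in the temporal tail is inherited from the Tracy--Widom lower-tail/upper-tail behavior of the one-point distribution $\sL(0,0;0,1)$ that enters after truncating the variational formula via Proposition \ref{P:corollary107}. Without fixing both of these, the chaining argument does not close, because the exponents in the pointwise tails are precisely what determine the logarithmic corrections in the final modulus.
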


The key thing to remember from Proposition \ref{P:modulus-bound} is that $\sL$ is H\"older-$1/3^-$ as we vary the time coordinates. This estimate is crucial for understanding how the Hausdorff dimensions in Theorem \ref{T:main} arise.

The directed landscape also has mixing properties both as we shift in time and space and as we rescale. We will only need a few of these properties moving forward.
	
\begin{prop}
		\label{P:tail-trivial}
		For $t > 0$, define $\sF_t = \sig\{\sL(x, s; y, r) : x, y \in \R, s < r \in [0, t]\}$. Then for any $A \in \sF_0^+ :=\bigcap_{t > 0} \sF_t$, we have $\P A \in \{0, 1\}$.
\end{prop}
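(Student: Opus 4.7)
The plan is to show that every $A \in \sF_0^+$ is independent of itself, which forces $\P(A) = \P(A)^2 \in \{0, 1\}$. The main tool is the independent increments property (II) of $\sL$, combined with joint continuity.

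For each $\epsilon > 0$, I would introduce the complementary $\sigma$-algebra
$$
\sG_\epsilon := \sigma\{\sL(x, s; y, r) : x, y \in \R, \, \epsilon \le s < r\}.
$$
Property (II) applied to disjoint open time intervals lying inside $(0, \epsilon)$ and $(\epsilon, \infty)$ gives joint independence of any finite collection of generators of $\sF_\epsilon$ and $\sG_\epsilon$. Promoting this to independence of the full $\sigma$-algebras $\sF_\epsilon$ and $\sG_\epsilon$ is a standard Dynkin/$\pi$--$\lambda$ argument, using that each side is generated by a countable subfamily (by continuity of $\sL$). Since $\sF_0^+ \subseteq \sF_\epsilon$, any $A \in \sF_0^+$ is independent of $\sG_\epsilon$ for every $\epsilon > 0$.

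The family $\{\sG_\epsilon\}_{\epsilon > 0}$ is increasing as $\epsilon \downarrow 0$, so $\bigcup_{\epsilon > 0} \sG_\epsilon$ is a $\pi$-system generating $\sG_{0+} := \bigvee_{\epsilon > 0} \sG_\epsilon$. A monotone-class argument then upgrades independence of $A$ from each $\sG_\epsilon$ to independence of $A$ from $\sG_{0+}$. I would next use joint continuity of $\sL$ at $s = 0$ to show that in fact $\sF_0^+ \subseteq \overline{\sG_{0+}}$ (the $\P$-completion): for any fixed $r > 0$ and $x, y \in \R$, one has $\sL(x, 0; y, r) = \lim_{s \downarrow 0} \sL(x, s; y, r)$ almost surely, and each approximant lies in $\sG_{0+}$. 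The remaining generators of $\sF_\epsilon$ have $s > 0$ and already belong to $\sG_{0+}$. Hence $A \in \overline{\sG_{0+}}$, so $A$ is independent of itself, giving $\P(A) \in \{0, 1\}$.

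I expect the main (though essentially routine) obstacle to be the Dynkin-class upgrade from (II), which as stated gives independence only for finitely many specific landscape slices, to the independence $\sF_\epsilon \perp \sG_\epsilon$ of full $\sigma$-algebras. Once that is in place, the rest is standard germ-$\sigma$-algebra/$0$-$1$-law bookkeeping combined with continuity of $\sL$ at time $0$.
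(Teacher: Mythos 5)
Your proposal is correct and follows essentially the same route as the paper: use continuity of $\sL$ to show that every generator $\sL(x,0;y,r)$ of $\sF_t$ is an a.s.\ limit of $\sL(x,s;y,r)$, $s\downarrow 0$, each of which is independent of $\sF_0^+$ by the independent-increments property, and conclude that $\sF_0^+$ is independent of itself. The paper states the Dynkin/$\pi$--$\lambda$ bookkeeping implicitly; your version spells it out via the auxiliary $\sigma$-algebras $\sG_\epsilon$ and the completion $\overline{\sG_{0+}}$, but the underlying argument is identical.
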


\begin{proof}
	Almost surely, for any $(x, s; y, t)\in \Rd$ with $0 \le s$, we have
	$$
	\sL(x, s; y, t) = \lim_{r \to 0^+} \sL(x, r; y, t)
	$$
	by continuity. For $r > 0$, $\sL(x, r; y, t)$ is independent of $\sF_0^+$ by the independent increment property of $\sL$. Therefore the whole process $\sL(\cdot, s; \cdot, \cdot), 0 \le s$ is independent of $\sF_0^+$. On the other hand, $\sF_0^+$ is contained in the $\sig$-algebra generated by $\sL(\cdot, s; \cdot, \cdot), 0 \le s$, so $\sF_0^+$ is independent of itself, yielding the result.
\end{proof}

\begin{prop}
	\label{P:landscape-mixing}
	Fix $k \in \N$. For every $i = 1, \dots, k$ and $\ep > 0$, let 
	$$
	K_{i, \ep} = \{(x, s; y, t) \in \Rd: s, t \in [0, \ep], x, y \in [i - 1/4, i + 1/4]\}.
	$$ 
	We can couple $k+1$ copies of the directed landscape $\sL_0, \sL_1, \dots, \sL_k$ so that $\sL_1, \dots, \sL_k$ are independent and almost surely for all small enough $\ep > 0$ and $i = 1, \dots, k$ we have $\sL_0|_{K_{i, \ep}} = \sL_i|_{K_{i, \ep}}$.
\end{prop}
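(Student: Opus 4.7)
The plan is to combine localization via parabolic decay of $\sL$, KPZ scale invariance, and a spatial mixing statement for the landscape on a unit time strip, finishing with a Borel--Cantelli argument over dyadic scales.

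First, I would use parabolic decay (Proposition \ref{P:corollary107}) to show that almost surely there exists a random $\ep_0 > 0$ such that for all $\ep \le \ep_0$, any geodesic between two points of $K_{i, \ep}$ stays in $[i - 1/2, i + 1/2]$ throughout its life. This is because a deviation of size $\de$ in the maximizer of a metric composition produces an extra parabolic cost of order $\de^2/\ep$, which dominates the $O(\ep^{1/3}\log^{2/3})$ fluctuations of $\sL$. Consequently $\sL_0|_{K_{i, \ep}}$ is determined by $\sL_0$ on the compact region $\{(x, s; y, t) \in \Rd : x, y \in [i - 1/2, i + 1/2],\ s, t \in [0, \ep]\}$.

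Second, I would apply KPZ scale invariance (Proposition \ref{P:land-props}) with $q = \ep^{1/3}$. The law of $\sL|_{K_{i, \ep}}$ then coincides with that of $\ep^{1/3}$ times $\sL$ on a spatial window of size $\tfrac{1}{2}\ep^{-2/3}$ centered at $\ep^{-2/3}i$, with time coordinates in $[0, 1]$. For small $\ep$, the windows corresponding to distinct $i$ are separated by $\ep^{-2/3}|i - j| \to \infty$, reducing the problem to coupling a landscape on the unit time strip with $k$ independent copies in a way that they agree on $k$ far-apart spatial boxes.

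Third, I would invoke a quantitative spatial mixing statement: for each $k$ and $\de > 0$, if $R$ is sufficiently large, then a directed landscape $\sL^*$ on the unit time strip can be coupled with $k$ independent copies $\sL^*_1, \dots, \sL^*_k$ such that $\sL^* = \sL^*_i$ on a unit spatial box around $Ri$ with total failure probability at most $\de$. I expect such a statement to follow from the Brownian Gibbs structure of the underlying Airy line ensemble, using that restrictions of the line ensemble to far-apart horizontal intervals are close in total variation to a product. Applied at $R_n = \ep_n^{-2/3}$ for $\ep_n = 2^{-n}$ with summable $\de_n$, this yields a joint coupling of $\sL_0, \sL_1, \dots, \sL_k$ under which $\sL_0|_{K_{i, \ep_n}} = \sL_i|_{K_{i, \ep_n}}$ for all $i$ with summable failure probability in $n$.

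Finally, Borel--Cantelli gives an almost surely finite random $n_0$ beyond which the equality holds at every dyadic scale $\ep_n$, and the inclusion $K_{i, \ep} \subseteq K_{i, \ep_n}$ for $\ep \le \ep_n$ extends this to all $\ep \le \ep_{n_0}$, which is the claim. The principal obstacle I anticipate is the third step: a quantitative spatial decoupling of the landscape does not follow directly from the axioms of $\sL$ listed in Section \ref{S:prelim}, and establishing it with a summable error will require nontrivial line-ensemble or Brownian-comparison input.
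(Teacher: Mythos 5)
Your first two steps (geodesic localization via parabolic decay, then KPZ rescaling to well-separated spatial windows on a unit time strip) are sound in spirit and in fact the localization idea is the same one the paper uses. A minor quantitative issue: localizing only to $[i-1/2,\,i+1/2]$ is too loose, since after rescaling by $q=\ep^{1/3}$ adjacent windows touch; the actual localization (deviation $\lesssim \ep^{2/3}\log^{1/3}(1/\ep)$ off the endpoint interval $[i-1/4,\,i+1/4]$) is much tighter and does give growing separation, so this is fixable. The real problem is step 3, and you correctly identify it as the obstacle.

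The quantitative spatial decoupling you posit --- coupling $\sL^*$ on a unit time strip with $k$ independent copies so they agree on far-apart (and in fact growing) spatial boxes with summable error --- is not established anywhere in this paper, nor does the paper's toolkit (Brownian absolute continuity, Fredholm determinants) obviously yield it. Even granting it, the Borel--Cantelli step as written does not go through: you would produce a separate coupling of $(\sL_0,\dots,\sL_k)$ for each scale $\ep_n$, not a single coupling on which the failure events $E_n^c$ all live and have summable probability. Passing from a family of per-scale couplings to one coupling that works almost surely at all small scales simultaneously requires an additional construction (e.g., a limiting procedure) that your sketch does not supply.

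The paper sidesteps both difficulties by working in a prelimiting Poisson last passage model (Theorems \ref{T:poisson} and \ref{T:poisson-geods}). There, one can build the coupling \emph{exactly}: take $k+1$ Poisson processes $P^n_0,\dots,P^n_k$ with $P^n_1,\dots,P^n_k$ independent and $P^n_0$ literally equal to $P^n_i$ on the box $[i-1/3,\,i+1/3]\times[0,1]$. Geodesic localization (the analogue of your step 1, proved uniformly in $n$ via the convergence of geodesics) then forces the prelimiting distances $d^n_0$ and $d^n_i$ to agree exactly on $K_{i,1/J}$ for a random $J$. One then takes a subsequential distributional limit in $n$ to obtain coupled landscapes with the desired property. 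No quantitative mixing estimate or Borel--Cantelli over scales is needed: the agreement is exact, not high-probability, and it is exported to the limit directly. In short, your plan requires a substantial unproven ingredient plus repair of the Borel--Cantelli step; the paper avoids both by coupling the prelimiting noise rather than the limiting object.
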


The proof of Proposition \ref{P:landscape-mixing} follows a standard, but rather tedious, approximation from a prelimiting last passage model. We leave it to Appendix \ref{A:landscape-properties}.

We will require one more result about the KPZ fixed point.
For this next theorem and throughout the paper, for random continuous functions $F, G$ with a common domain $D$ we write $F \ll_c G$ if the law of $F|_K$ is absolutely continuous with respect to the law of $G|_K$ for any compact set $K \sset D$. We will also write $X \ll Y$ for two random variables $X, Y$ if the law of $X$ is absolutely continuous with respect to the law of $Y$.

\begin{theorem}[special case of Theorem 1.2, \cite{sarkar2020brownian}]
	\label{T:BrownianAC}
	Let $\fh_\cdot$ denote the KPZ fixed point run from any initial condition $h_0$ which is bounded above. Then for any $t > 0$, we have $ \fh_t(\cdot) - \fh_t(0) \ll_c B$, where $B:\R\to \R$ is a two-sided Brownian motion.
\end{theorem}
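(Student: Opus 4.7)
The plan is to reduce to the narrow-wedge case via the variational representation
\[
\fh_t(y; h_0) = \sup_{x \in \R} \bigl[h_0(x) + \sL(x, 0; y, t)\bigr],
\]
and exploit that for each fixed source $x_0$, the map $y \mapsto \sL(x_0, 0; y, t)$ is, up to a parabolic shift and deterministic scaling, an Airy$_2$ process, which is already known to be locally absolutely continuous with respect to Brownian motion by Corwin--Hammond \cite{CH}. First I would set up the argmax analysis. Let $X(y) := \argmax_x [h_0(x) + \sL(x, 0; y, t)]$. The hypothesis that $h_0$ is bounded above combined with the parabolic decay in Proposition \ref{P:corollary107} forces the supremum to be attained on a bounded random set, and the locally Brownian behaviour in $x$ of $\sL(\cdot, 0; y, t)$ ensures that for each fixed $y$, $X(y)$ is almost surely unique. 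On any compact interval $I \subset \R$ of $y$-values, continuity of $\sL$ and the parabolic decay provide a random $M = M(I) < \infty$ with $X(y) \in [-M, M]$ for all $y \in I$ almost surely.

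Next I would localize. Fixing $I = [-K, K]$ and working on the event $E_M := \{X(y) \in [-M, M] \text{ for all } y \in I\}$, I would write
\[
\fh_t(y; h_0) - \fh_t(0; h_0) = \bigl[\sL(X(y), 0; y, t) - \sL(X(0), 0; 0, t)\bigr] + \bigl[h_0(X(y)) - h_0(X(0))\bigr].
\]
The key step is to show that this quantity differs from $\sL(X(0), 0; y, t) - \sL(X(0), 0; 0, t)$ by a correction which is smoother than Brownian motion (H\"older-$2/3^-$ would suffice). This is a consequence of the optimality of $X(y)$: the loss incurred by evaluating at $X(0)$ rather than $X(y)$ is second-order in the transversal displacement, and the transversal fluctuation estimates for the directed landscape control that displacement. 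Granted this smoothness, the increment $\fh_t(\cdot; h_0) - \fh_t(0; h_0)$ on $I$ agrees with $\sL(X(0), 0; \cdot, t) - \sL(X(0), 0; 0, t)$ modulo a smoother remainder, and conditionally on $X(0) = x_0 \in [-M, M]$ the leading term is a shifted, parabolically corrected Airy$_2$ process, hence locally absolutely continuous with respect to $B$.

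The main obstacle is making the ``smoother than Brownian'' claim quantitative enough to absorb it into a Radon--Nikodym density, and then promoting the \emph{conditional} absolute continuity (given a single value of $X(0) = x_0$) to an unconditional statement after integrating over the law of $X(0)$. The first issue requires a careful second-order expansion of the variational problem near the maximizer, together with argmax stability for the directed landscape. The second requires a \emph{uniform} version of Corwin--Hammond on compact source-intervals, with Radon--Nikodym derivatives controlled uniformly as $x_0$ ranges over $[-M, M]$; the quantitative comparisons of \cite{hammond2016brownian, CHH20} are the natural tool. Once both pieces are in place, integrating over the law of $X(0)$ (restricted to $E_M$) and then letting $M \to \infty$ along a compact exhaustion yields $\fh_t(\cdot; h_0) - \fh_t(0; h_0) \ll_c B$ on every compact interval, as desired.
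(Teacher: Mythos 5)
The paper does not prove this statement: it is stated (and used) as a citation of Theorem 1.2 of Sarkar--Vir\'ag \cite{sarkar2020brownian}, so there is no ``paper's own proof'' against which to compare. Your proposal is thus attempting to reconstruct the Sarkar--Vir\'ag argument from first principles, and in that attempt there are two gaps that I believe are fatal as stated.

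First, the plan to absorb the correction term into a Radon--Nikodym density because it is ``smoother than Brownian motion (H\"older-$2/3^-$ would suffice)'' is not a valid step. Absolute continuity of $B + f$ with respect to $B$ requires $f$ to lie in the Cameron--Martin space, i.e.\ $f$ absolutely continuous with $\int |f'|^2 < \infty$. A H\"older-$2/3^-$ function need not even have bounded variation: for instance, adding an independent fractional Brownian motion of Hurst index close to $2/3$ to $B$ produces a process singular to $B$. So the claimed reduction does not follow from H\"older regularity of the remainder; you would need something like a.e.\ differentiability with locally square-integrable derivative, which is a qualitatively stronger claim and not obviously available here. Related to this, the argmax $X(y)$ is not continuous in $y$ --- it jumps --- so the correction you write down is itself not a nice function, and the ``second-order expansion near the maximizer'' picture does not directly apply across jump points of $X(\cdot)$.

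Second, the step ``conditionally on $X(0) = x_0$, the leading term $\sL(x_0, 0;\cdot,t) - \sL(x_0, 0; 0, t)$ is a shifted, parabolically corrected Airy$_2$ process'' is incorrect. The random variable $X(0)$ is a functional of the same landscape that produces $\sL(x_0, 0;\cdot,t)$, so conditioning on $\{X(0) = x_0\}$ changes the law of $\sL(x_0, 0;\cdot,t)$: you are conditioning the Airy sheet on its own variational structure, and the resulting conditional law is not a free Airy process. A correct argument (as in \cite{sarkar2020brownian}) has to handle the dependence between the argmax over sources and the spatial process jointly, rather than freezing the source and appealing to the unconditional Corwin--Hammond result. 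Integrating over the law of $X(0)$ at the end does not repair this, because the thing you would be integrating is not the claimed conditional law.

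Since the paper takes this theorem as a black box, the appropriate fix is simply to cite \cite{sarkar2020brownian} rather than to reprove it; if you do want a self-contained proof, the Cameron--Martin and conditioning issues above need to be addressed head on.
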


\subsection{Absolute continuity facts}

\begin{theorem} [Theorem 2.15, \cite{dauvergne2020three}]
	\label{T:bessel} Let $B$ be a Brownian motion on $[0,1]$, and let $T$ be the unique point of maximum of $B$. Let  $\mu$ of be the law of
	$$
	B(T)-B(T+t), \qquad t\in [-T, 1 - T]\,.
	$$
	Note that $\mu$ is the law of a random function defined on random interval. 
	Let $R$ be a two-sided Bessel-$3$ process independent of $T$, and let $\nu$ be the law of $R$ on the interval $[-T,1-T]$. 
	Then $\mu$ is absolutely continuous with respect to $\nu$.
\end{theorem}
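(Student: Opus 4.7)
The plan is to reduce the statement to two classical ingredients: Denisov's decomposition of Brownian motion at its argmax, and Imhof's absolute continuity relation between the Brownian meander and the Bessel-$3$ process.

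First I will condition on $T = t$. By Denisov's theorem, conditional on $T = t$ the two processes
\[
B^-(s) := B(T) - B(T - s),\ s \in [0, t], \qquad B^+(s) := B(T) - B(T + s),\ s \in [0, 1-t]
\]
are independent, with $B^-$ distributed as a Brownian meander on $[0,t]$ and $B^+$ distributed as a Brownian meander on $[0, 1-t]$. The function appearing in $\mu$ on the random interval $[-T, 1-T]$ is simply the concatenation of the time-reversal of $B^-$ (on $[-T, 0]$) with $B^+$ (on $[0, 1-T]$), so understanding these two meander pieces is equivalent to understanding $\mu$.

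Next, Imhof's relation says that a Brownian meander on $[0, L]$ is absolutely continuous with respect to a standard Bessel-$3$ process on $[0, L]$, with Radon--Nikodym density proportional to the reciprocal of the value at time $L$. Applying this to each of $B^-$ and $B^+$, and using conditional independence, the joint law of $(B^-, B^+)$ given $T = t$ is absolutely continuous with respect to the product law of two independent standard Bessel-$3$ processes on $[0, t]$ and $[0, 1-t]$. On the other hand, under $\nu$ conditional on $T = t$, the two-sided Bessel-$3$ process $R$ restricted to $[-t, 1-t]$ is exactly a pair of independent standard Bessel-$3$ processes started at $0$, covering $[0, t]$ and $[0, 1-t]$ respectively (one embedded into the negative half via time reversal). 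Hence $\mu(\cdot \mid T = t) \ll \nu(\cdot \mid T = t)$ for Lebesgue-a.e.\ $t \in (0,1)$.

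Finally, $T$ has the same marginal under both sides: on the $\mu$ side it is the arcsine-distributed argmax of $B$, while on the $\nu$ side it is the same random variable, independent of $R$. Integrating the conditional absolute continuity against this common marginal gives $\mu \ll \nu$. The one mild technicality I expect to be the main obstacle is the bookkeeping for interpreting $\mu, \nu$ as measures on pairs $(t, f)$ with $t \in [0,1]$ and $f \in C([-t, 1-t])$ (the paper emphasizes these are laws on a random interval); once this setup is made precise, absolute continuity of the conditionals lifts to absolute continuity of the mixtures, and the substantive content is entirely contained in Denisov's decomposition plus Imhof's relation, both of which are well-documented.
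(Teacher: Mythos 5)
Your argument is correct. Note first that the present paper does not itself prove this statement: it quotes Theorem~2.15 of \cite{dauvergne2020three} as a black box, so there is no internal proof to compare against. What you propose is the natural route, and it works. Denisov's decomposition gives exactly the conditional structure you use: given $T=t$, the two halves $s\mapsto B(T)-B(T-s)$ on $[0,t]$ and $s\mapsto B(T)-B(T+s)$ on $[0,1-t]$ are independent Brownian meanders (on intervals of the respective lengths), and Imhof's relation gives that a meander on $[0,L]$ is absolutely continuous with respect to a Bessel-$3$ process started at $0$ on $[0,L]$, with density proportional to the reciprocal of the endpoint value. Under $\nu$ conditional on $T=t$, the restriction $R|_{[-t,1-t]}$ is likewise a pair of independent Bessel-$3$ processes on $[0,t]$ and $[0,1-t]$, so the conditional laws satisfy $\mu(\cdot\mid T=t)\ll\nu(\cdot\mid T=t)$. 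The disintegration step you flag is indeed routine: since the random interval $[-T,1-T]$ determines $T$ as a measurable function of the sample, and $T$ has the same (arcsine) marginal under $\mu$ and $\nu$, a $\nu$-null set has $\nu_t$-null slices for a.e.\ $t$, hence $\mu_t$-null slices, hence $\mu$-measure zero. Two small bookkeeping remarks: (i) the paper's convention (see its footnote) is that Brownian motion and Bessel-$3$ have variance $2$, which is a deterministic rescaling and changes nothing about absolute continuity; (ii) you should use Brownian scaling to reduce the Imhof step on $[0,L]$ to the standard $[0,1]$ statement, as you implicitly do. With those noted, the proof is complete.
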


In Theorem \ref{T:bessel}, we can think of continuous functions on a random closed interval $I \sset [-1, 1]$ as continuous functions on $[-1, 1]$ by extending the functions to be constant off of $I$. Equipping these extensions with the uniform norm (and identifying functions with the same extension) makes the space of such functions a complete separable metric space.

Theorems \ref{T:BrownianAC} and \ref{T:bessel} together imply that the Bessel-$3$ process should play a crucial role in studying the KPZ fixed point near its maximum. We end this section with two more facts that will be useful for going between local absolute continuity statements and global ones.

\begin{lemma}
	\label{L:abs}
	Let $X, Y$ be two random variables, and suppose that $X_n$ is a sequence of random variables such that there exists a random $N \in \N$ with $X_N = X$ and such that $X_n \ll Y$ for all $n$. Then $X \ll Y$.
\end{lemma}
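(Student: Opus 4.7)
The plan is to verify absolute continuity directly from the definition: show that every $Y$-null event is also $X$-null. Fix a measurable set $A$ with $\P(Y \in A) = 0$. By hypothesis, $X_n \ll Y$ for every $n \in \N$, so $\P(X_n \in A) = 0$ for each $n$.

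The key observation is that $N$ takes values in the countable set $\N$, so we can decompose $\{X \in A\}$ according to the value of $N$. Since $X_N = X$ almost surely,
\[
\P(X \in A) = \P(X_N \in A) = \sum_{n=1}^\infty \P(X_n \in A,\, N = n) \le \sum_{n=1}^\infty \P(X_n \in A) = 0.
\]
Therefore $\P(X \in A) = 0$ whenever $\P(Y \in A) = 0$, which is precisely the assertion $X \ll Y$.

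There is really no obstacle here; the only thing to keep in mind is that the lemma silently requires $N$ and the $X_n$'s to be defined on a common probability space (so the joint event $\{X_n \in A, N = n\}$ makes sense), which is the natural reading of the hypothesis. The argument is a one-line countable-subadditivity reduction, and the role of the lemma later in the paper is presumably to bootstrap local absolute continuity (e.g.\ the Sarkar--Vir\'ag comparison on random compact intervals, as in Theorem \ref{T:bessel}) into a global statement by exhausting with a random index $N$.
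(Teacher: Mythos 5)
Your proof is correct and is essentially the paper's argument in contrapositive form: you start from $\P(Y\in A)=0$ and decompose $\{X\in A\}$ over the countable values of $N$, while the paper starts from $\P(X\in A)>0$, picks an $n$ with $\P(X\in A, N=n)>0$, and deduces $\P(Y\in A)>0$. Same decomposition, same use of countability of $N$; no meaningful difference.
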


\begin{proof}
	Consider an event $A$ with $\p(X \in A) > 0$. Then there exists $n \in \N$ for which $\p(X \in A, N = n) > 0$ and hence $\P(X_n \in A) > 0$. Since $X_n \ll Y$, $\p(Y \in A) > 0$ as well.
\end{proof}

\begin{lemma}[Lemma 4.3, \cite{dauvergne2020three}]
	\label{L:abs-cvgence}
	Let $D:\R^n \to \R$ be any stochastic process, let $B$ be an $n$-dimensional Brownian motion, and suppose that $D \ll_c B$. Then
	$$
	D_\ep(t) := \ep^{-1} D(\ep^2 t)
	$$
	converges in law to $B$ as $\ep \to 0$ in the uniform-on-compact topology.
\end{lemma}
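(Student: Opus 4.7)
The plan is to combine Brownian scaling with a reverse martingale convergence argument. In fact I would aim to prove the stronger statement that $D_\ep|_K$ converges to $B|_K$ in total variation for every compact $K \sset \R^n$. The key observation is that by Brownian scaling, $B_\ep(t) := \ep^{-1} B(\ep^2 t)$ satisfies $B_\ep \eqd B$ as a process. Consequently, after unrescaling via the map $T_\ep: g \mapsto \ep^{-1} g(\ep^2 \cdot)$, showing $\|\P(D_\ep|_K \in \cdot) - \P(B|_K \in \cdot)\|_{\mathrm{TV}} \to 0$ is equivalent to showing
$$
\| \P(D|_{\ep^2 K} \in \cdot) - \P(B|_{\ep^2 K} \in \cdot) \|_{\mathrm{TV}} \longto 0 \quad \text{as } \ep \to 0.
$$
Without loss of generality, take $K$ to be a closed ball around the origin, so that $\ep^2 K \sset K$ for all $\ep \in (0, 1]$.

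Since $D \ll_c B$, there is a Radon-Nikodym derivative $f := d\P(D|_K \in \cdot)/d\P(B|_K \in \cdot)$ defined on $C(K, \R)$. By the pushforward-under-restriction identity (equivalently, the tower property of conditional expectation), the Radon-Nikodym derivative of $\P(D|_{\ep^2 K} \in \cdot)$ with respect to $\P(B|_{\ep^2 K} \in \cdot)$ is
$$
f_\ep = \E[f \mid \sG_\ep], \qquad \sG_\ep := \sigma(B|_{\ep^2 K}).
$$
The $\sigma$-algebras $\sG_\ep$ are decreasing as $\ep \to 0$, and their intersection $\sG_0 := \bigcap_{\ep > 0} \sG_\ep$ is trivial up to $\P$-null sets by a Blumenthal 0-1 law applied to $B$ at the origin. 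Hence by the reverse martingale convergence theorem, $f_\ep \to \E f = 1$ almost surely and in $L^1$.

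The total variation distance then obeys
$$
\| \P(D|_{\ep^2 K} \in \cdot) - \P(B|_{\ep^2 K} \in \cdot) \|_{\mathrm{TV}} = \tfrac{1}{2}\,\E|f_\ep - 1| \longto 0,
$$
which yields convergence of $D_\ep$ to $B$ in the uniform-on-compact topology. The only delicate point is the triviality of the germ $\sigma$-algebra $\sG_0$: for $n = 1$ (the case relevant for the comparison with two-sided Brownian motion in Theorem \ref{T:BrownianAC}) this is the classical Blumenthal 0-1 law for Brownian motion at $0$, and the general $n$-parameter Brownian sheet case is analogous via independence of disjoint increments. Every other step is routine.
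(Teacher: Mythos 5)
Your argument is correct and, as far as I can tell, is essentially the proof given for Lemma~4.3 in the cited reference \cite{dauvergne2020three}: undo the rescaling by Brownian scaling so the question becomes small-scale behaviour of the Radon--Nikodym derivative, note that the RN derivative of $D|_{\ep^2 K}$ against $B|_{\ep^2 K}$ is the conditional expectation $\E[f\mid\sG_\ep]$, and invoke reverse-martingale convergence plus triviality of the germ $\sigma$-algebra $\sG_0$ to conclude $f_\ep\to 1$ in $L^1$, hence TV convergence. Two small remarks: (a) you actually prove the stronger total-variation statement, which is harmless; (b) the lemma's ``$n$-dimensional Brownian motion'' $B$ in this paper is an $\R^n$-valued process on the line (cf.\ the remark immediately after the lemma, where it is applied with $R=\|B\|_2$ for $3$-dimensional $B$), not a sheet on $\R^n$ --- so the relevant germ $\sigma$-algebra is that of a two-sided $\R^n$-valued Brownian motion at the origin. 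For the two-sided case, triviality does need the extra observation that the two one-sided germ $\sigma$-algebras are independent and each Blumenthal-trivial, and that an intersection of $\sigma$-algebras of the form $\sA_\ep\vee\sB_\ep$, with $\sA_\ep\cvgdown\{\emptyset,\Om\}$ and $\sB_\ep\cvgdown\{\emptyset,\Om\}$ and $\sA_1\perp\sB_1$, is trivial (mod null sets); this can be done, e.g., by conditioning on one side and applying reverse martingale convergence twice. You gesture at this but label it as the only delicate point, which is fair.
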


Lemma \ref{L:abs-cvgence} also applies if $B$ is replaced by a two-sided Bessel-$3$ process $R:\R \to \R$, since $R = \|B\|_2$, where $B$ is a $3$-dimensional Brownian motion.

\section{Always decaying initial conditions and absolute continuity}
	\label{S:ic-abs-cont}
We say that an initial condition $h_0:\R \to \R \cup \{-\infty\}$ is \textbf{always decaying} if for any fixed $t > 0$, a.s.\ we have
\begin{equation}
\label{E:xh-to-infty}
\lim_{|x| \to \infty} \mathfrak{h}_t(x; h_0) \to -\infty.
\end{equation}
The goal of this section is to prove a few basic properties of the KPZ fixed point run from an always decaying initial condition, culminating in two absolute continuity results relating always decaying initial conditions to the initial condition $h_0 = - R$, where $R$ is a Bessel-$3$ process. As part of this effort, we show that $h_0$ is always decaying if and only if it satisfies \eqref{E:abstract-condition}.

Our first lemma uses basic bounds to give criteria for when initial conditions are always decaying. We have not attempted to refine this lemma to get an optimal result. 

\begin{lemma}
	\label{L:whats-in-the-set}
	Let $h_0:\R \to \R \cup \{-\infty\}$ be an upper semicontinuous function.
	\begin{enumerate}[label=(\roman*)]
		\item If  $\displaystyle \lim_{|x| \to \infty} \frac{h_0(x)}{
			\log^{2/3}(|x|)} = -\infty$
		then $h_0$ is always decaying.
		\item Suppose that $\limsup_{|x| \to \infty} h_0(x) > -\infty$. Then for any $t > 0$, a.s.\ we have
		$$
		\limsup_{|x| \to \infty} \fh_t(x) = \infty.
		$$
		In particular, if $h_0$ is always decaying, then $\displaystyle \lim_{|x| \to \infty} h_0(x) = -\infty$.
	\end{enumerate}
\end{lemma}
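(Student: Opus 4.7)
The plan is to use the variational formula $\fh_t(y)=\sup_{x\in\R} h_0(x)+\sL(x,0;y,t)$ together with the parabolic bound for the (rescaled) Airy sheet from Proposition \ref{P:corollary107-airysheet}, which gives random constants $A_t,B_t<\infty$ (depending on $t$) such that
\[
\sL(x,0;y,t)\le -\frac{(x-y)^2}{t}+A_t+B_t\log^{2/3}(2+|x|+|y|)
\]
and a matching lower bound $\sL(x,0;y,t)\ge -(x-y)^2/t - A_t - B_t\log^{2/3}(2+|x|+|y|)$.

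For (i), the hypothesis gives, for any $K>0$, a (deterministic) $M=M(K)$ with $h_0(x)\le -K\log^{2/3}|x|$ for $|x|\ge M$; note also that $h_0$ is bounded above since $h_0\to-\infty$. I would fix a realization of $\sL$ (so that $A_t,B_t$ are constants), then choose $K=K(\omega)$ large enough relative to $B_t$, and split the supremum into three regions. Region (a) $|x|\le |y|/2$: the bound $-(x-y)^2/t\le -y^2/(4t)$ dominates all logarithmic terms. Region (b) $|y|/2<|x|\le 2|y|$: here $\log^{2/3}(2+|x|+|y|)$ and $\log^{2/3}|x|$ are both comparable to $\log^{2/3}|y|$, so choosing $K$ larger than a fixed multiple of $B_t$ makes the coefficient of $\log^{2/3}|y|$ strictly negative. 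Region (c) $|x|>2|y|$: the parabolic term $-(x-y)^2/t\le -x^2/(4t)$ easily absorbs $B_t\log^{2/3}|x|$ and drives the expression to $-\infty$. Combining the three regions gives $\fh_t(y)\to-\infty$ as $|y|\to\infty$.

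For (ii), let $x_n\to\infty$ with $h_0(x_n)\ge M>-\infty$ (the case $x_n\to-\infty$ is symmetric). By the variational formula, $\fh_t(x_n)\ge M+\sL(x_n,0;x_n,t)$, and by spatial stationarity (Proposition \ref{P:land-props}(3)) each $\sL(x_n,0;x_n,t)$ has the law of $t^{1/3}\chi$ with $\chi$ a GUE Tracy–Widom variable, whose law has full unbounded support on $\R$. It therefore suffices to show $\limsup_n \sL(x_n,0;x_n,t)=\infty$ almost surely. To do this I would pass to a well-separated subsequence (WLOG $|x_n-x_m|\ge 1$ for $n\ne m$), then use a metric-composition lower bound $\sL(x_n,0;x_n,t)\ge \sL(x_n,0;x_n,\epsilon)+\sL(x_n,\epsilon;x_n,t)$ for some small $\epsilon>0$. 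The first factor, localized in a small spacetime box near $(x_n,0)$, can be coupled (using a version of Proposition \ref{P:landscape-mixing} combined with spatial stationarity) to i.i.d.\ copies of a Tracy–Widom-scaled variable across $n$; the second factor is bounded below by $-A_t'-B_t'\log^{2/3}(2+|x_n|)$ via Proposition \ref{P:corollary107-airysheet}. A Borel–Cantelli argument on the i.i.d.\ small-time factors, using the Tracy–Widom upper tail, then gives $\limsup_n \sL(x_n,0;x_n,t)=\infty$ a.s., hence $\limsup_{|y|\to\infty}\fh_t(y)=\infty$.

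The main obstacle is the $\limsup$ argument in (ii): Proposition \ref{P:landscape-mixing} as stated only supplies i.i.d.\ coupling on small-time boxes near $(i,0)$, while we need to draw conclusions about $\sL$ at the fixed time $t$. The tension is that making $\epsilon$ small forces the coupled i.i.d.\ factor $\sL(x_n,0;x_n,\epsilon)\sim \epsilon^{1/3}\chi$ to be typically small, while the logarithmic penalty coming from the $\sL(x_n,\epsilon;x_n,t)$ factor is independent of $\epsilon$. Balancing this so that the Borel–Cantelli sum diverges requires care; a cleaner substitute, if available, is ergodicity of the diagonal shifted Airy sheet $x\mapsto \sS(x,x)$ under spatial translation (it is stationary by Proposition \ref{P:land-props}(3)), which would directly yield $\limsup_{x\to\infty}\sL(x,0;x,t)=+\infty$ from the fact that $\esssup \sL(0,0;0,t)=+\infty$.
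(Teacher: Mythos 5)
For part (i) your argument via the parabolic bound on the Airy sheet (Proposition \ref{P:corollary107-airysheet}) and the variational formula is essentially what the paper does, and your region-by-region verification is a reasonable way to fill in the details that the paper leaves implicit.

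For part (ii), the gap you flag yourself is real and your primary approach does not close. To be concrete: Proposition \ref{P:corollary107-airysheet}, after rescaling to the time interval $[\epsilon,t]$, gives a lower bound $\sL(x_n,\epsilon;x_n,t)\ge -A-B\log^{2/3}(2+|x_n|)$ with a \emph{deterministic} logarithmic coefficient $B\asymp t^{1/3}$, while your i.i.d.\ small-time factors $\sL(x_n,0;x_n,\epsilon)\sim\epsilon^{1/3}\chi_n$ have a Tracy--Widom upper tail $\P(\chi_n>s)\approx e^{-cs^{3/2}}$. For the Borel--Cantelli sum to diverge you need $\epsilon$ to be comparable to $t$, which destroys the small-time coupling; and in any case you have no control over the growth rate of the subsequence $x_n$, which the hypothesis only guarantees to exist, so by taking $x_n$ to grow super-exponentially the Borel--Cantelli sum is summable for any fixed $\epsilon$. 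Thus the metric-composition/coupling route really is blocked, not merely delicate.

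The paper's argument circumvents all of this by a scale-invariance trick that you did not find: by Proposition \ref{P:land-props}.1 with $q=x^{1/2}$,
\[
(\sL(0,0;0,1),\;\sL(x,0;x,1)) \eqd x^{1/2}\big(\sL(0,0;0,x^{-3/2}),\;\sL(1,0;1,x^{-3/2})\big),
\]
which converts ``the same time scale, spatially far apart'' into ``two fixed spatial points, vanishing time scale.'' Asymptotic independence as $x\to\infty$ then follows from the time-zero tail triviality in Proposition \ref{P:tail-trivial}, and stationarity in $x$ plus the unbounded support of the Tracy--Widom law finishes the job. Your alternative suggestion --- ergodicity of the stationary diagonal process $x\mapsto\sL(x,0;x,1)$ --- is conceptually the right object to appeal to, but the paper never establishes a spatial ergodic theorem directly; the scale-invariance-plus-time-tail-triviality identity above is precisely how the needed mixing is extracted from the tools actually available, and that is the step you would need to supply.
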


\begin{proof}
Part (i) follows immediately from the bound in Proposition \ref{P:corollary107-airysheet} on the Airy sheet, the fact that $\sL$ has Airy sheet marginals, and the variational representation \eqref{E:hopf-lax} for $\fh_t$.

For part (ii), suppose that $\limsup_{|x| \to \infty} h_0(x) > -\infty$. Then there is an unbounded sequence $x_n, n \in \N$ and a constant $c \in \R$ such that $h_0(x_n) \ge c$ for all $n$. We have
$$
\limsup_{|x| \to \infty} \fh_t(x) \ge \limsup_{n \to \infty} \fh_t(x_n) \ge c + \limsup_{n \to \infty} \sL(x_n, 0; x_n, 1).
$$ 
Now, the process $\sL(x, 0; x, 1), x \in \R$ is stationary (Proposition \ref{P:land-props}.3) and $\sL(0, 0; 0, 1)$ is a standard Tracy-Widom and hence has unbounded support. Therefore the limsup above is $\infty$ if we can show that the joint distribution of $\sL(0,0; 0, 1), \sL(x, 0; x, 1)$ converges as $x \to \pm \infty$ to that of two independent Tracy-Widom random variables. This follows from the independence established in Proposition \ref{P:tail-trivial} and KPZ rescaling (Proposition \ref{P:land-props}.1):
\[
(\sL(0,0; 0, 1), \sL(x, 0; x, 1)) \eqd x^{1/2}(\sL(0,0; 0, x^{-3/2}), \sL(1, 0; 1, x^{-3/2})). \qedhere
\]
\end{proof}

  Next, we use the independent increment property of $\sL$ prove two results about the KPZ fixed point from an always decaying initial condition, including the equivalence of \eqref{E:xh-to-infty} and \eqref{E:abstract-condition}.
 
 \begin{lemma}
 \label{L:as-decay-wehave}
 Let $h_0$ be an always decaying initial condition. Then for any compact interval $[a, b] \sset (0, \infty)$, a.s.\ we have 
 \begin{align}
 \label{E:lim-strong}
 \lim_{|x| \to \infty} \sup_{t \in [a, b]} \fh_t(x) = -\infty.
 \end{align} 
 \end{lemma}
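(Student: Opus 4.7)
The plan is to use the metric composition property of $\sL$ at an intermediate time $s_0 \in (0, a)$ together with the parabolic upper bound from Proposition~\ref{P:corollary107}. By Property I of the directed landscape, for every $t \in [a, b]$ and $y \in \R$,
\[
\fh_t(y) = \sup_{z \in \R}\bigl[\fh_{s_0}(z) + \sL(z, s_0; y, t)\bigr].
\]
By the always-decaying hypothesis applied at the single time $s_0$, almost surely $\fh_{s_0}(\cdot)$ attains a finite maximum and $\fh_{s_0}(z) \to -\infty$ as $|z| \to \infty$; I work on this event.

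Applying Proposition~\ref{P:corollary107} gives an a.s.\ finite random $C$ such that, using $-(z-y)^2/(t-s_0) \le -(z-y)^2/(b-s_0)$ for $t \in [a,b]$,
\[
\sup_{t \in [a, b]} \sL(z, s_0; y, t) \le -\frac{(z - y)^2}{b - s_0} + C\log^2(|z| + |y| + 2).
\]
Substituting,
\[
\sup_{t \in [a, b]} \fh_t(y) \le \sup_{z \in \R}\left[\fh_{s_0}(z) - \frac{(z - y)^2}{b - s_0} + C\log^2(|z| + |y| + 2)\right].
\]

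It remains to show the right-hand side tends to $-\infty$ as $|y| \to \infty$. I would split the supremum over $z$ into three regions with thresholds depending on $|y|$: (a) $|z| \le |y|/2$, where the parabolic term bounds the whole expression by $\max\fh_{s_0} - y^2/(4(b-s_0)) + C\log^2|y| \to -\infty$; (b) $|z - y|$ sufficiently large (relative to a power of $\log|y|$), where the parabolic term again overwhelms the polylog and $\fh_{s_0}$ is bounded above by its maximum; and (c) $|z|$ close to $|y|$ with $|z-y|$ small, so that $|z| \to \infty$ with $|y|$ and $\fh_{s_0}(z) \to -\infty$ by the always-decaying property at time $s_0$.

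The main obstacle is region (c): the polylog error from Proposition~\ref{P:corollary107} grows like $\log^2|y|$, while the always-decaying hypothesis only provides unquantified decay of $\fh_{s_0}$. To close this gap, one can either tune the region thresholds as functions of $|y|$ (e.g.\ letting the inner region scale like $|y|^{1/2}$ and the outer region like $\exp\sqrt{-F(|y|^{1/2})/C}$, where $F(r) := \sup_{|z|>r}\fh_{s_0}(z)$) so the polylog is absorbed by $F$, or bootstrap via iterated metric composition at a chain of intermediate times $0 < s_0' < s_0 < a$ to extract a quantitative decay rate for $\fh_{s_0}$. An alternative is to use the tighter Airy-sheet-marginal bound (Proposition~\ref{P:corollary107-airysheet}, giving $\log^{2/3}$ error) on fixed-$t$ slices and separately control the time direction using the H\"older modulus in Proposition~\ref{P:modulus-bound} applied to the stationary version $\sR$.
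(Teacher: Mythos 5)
Your approach is genuinely different from the paper's, and unfortunately the obstacle you identify in region (c) is a real gap that your proposed workarounds do not close. The fundamental issue is that the always-decaying hypothesis gives only \emph{unquantified} decay of $\fh_{s_0}(z)$ as $|z|\to\infty$, while the error term in Proposition~\ref{P:corollary107} is a \emph{quantified} growth of order $\log^2$. An unquantified decay cannot be played off against a quantified growth. Concretely, in your workaround (a) with inner threshold $|y|^{1/2}$: on the middle region $|y|^{1/2}\le |z|$, $|z-y|\le g(|y|)$, the bound is $F(|y|^{1/2}) + C\log^2(|y|+g(|y|))$, where $F(r):=\sup_{|z|>r}\fh_{s_0}(z)$. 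If $g(|y|)\lesssim |y|$ this is $F(|y|^{1/2}) + C\log^2|y|$, which you cannot send to $-\infty$ without knowing that $F$ decays faster than $\log^2$; if $g(|y|)\gg|y|$ and $g$ is tuned so $C\log^2 g(|y|) = -F(|y|^{1/2})$, the two terms cancel and you get roughly $0$, not $-\infty$. Workaround (b) (iterated composition) just accumulates more polylog errors, and workaround (c) fails because the H\"older constant $C_K$ of Proposition~\ref{P:modulus-bound} degrades as the compact set $K$ moves out with $|x|$.

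The paper sidesteps this entirely with a soft contradiction argument using the Markov/independent-increment property of $\sL$. Suppose the uniform-in-$t$ limit fails with positive probability, say exceeds $c+1$. Then for each $n$ there is a stopping time $T_n\in[a,b]$ and a point $X_n$ with $|X_n|>n$ and $\fh_{T_n}(X_n)\ge c$. By independent increments, $\fh_b(X_n)\ge c + \sL(X_n,T_n;X_n,b)$, and $\sL(X_n,T_n;X_n,b)\eqd (b-T_n)^{1/3}Y$ for an independent Tracy--Widom $Y$. Hence $\liminf_n\P(\fh_b(X_n)\ge c)>0$, which by reverse Fatou contradicts the a.s.\ decay of $\fh_b(x)$ as $|x|\to\infty$. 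The key insight is that the Markov property lets one transport a failure at an arbitrary (random, stopping) time in $[a,b]$ to a failure at the single fixed time $b$, where the always-decaying hypothesis applies directly --- no rate of decay is ever needed. You should study this argument: it is the natural tool here precisely because the hypothesis is qualitative.
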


\begin{proof}
Suppose that the left side of \eqref{E:lim-strong} is at least some constant $c + 1$ with positive probability. For each $n \in \N$, let 
$$
T_n = \inf \{ t \in [a, b] : \sup_{x \notin [-n, n]} \fh_{t}(x) \ge c + 1/2 \}.
$$ 
Let $X_n \in [-n, n]^c$ be any (measurable) point such that $\fh_{T_n}(X_n) \ge c$. Now, since each $T_n$ is a stopping time with respect to the filtration $\sF_t = \{\sL(x, s; y, r) : r \le t\}$, by the independent increment property of $\sL$, each of the random variables $\fh_b(X_n), n \in \N$ satisfies
$$
\fh_b(X_n) \ge \fh_{T_n}(X_n) + \sL(X_n, T_n; X_n, b) \ge c + \sL(X_n, T_n; X_n, b) \eqd c + (b-T_n)^{1/3} Y,
$$
where $Y$ is an independent Tracy-Widom GUE random variable and we use the convention that $\sL(X_n, T_n; X_n, b) = 0$ if $T_n = b$. In particular,
$$
\liminf_{n \to \infty} \P(\fh_b(X_n) \ge c) > 0,
$$
and so with positive probability $\fh_b(x) \not\to -\infty$ as $|x| \to \infty$. This is a contradiction.
\end{proof}

\begin{corollary}
\label{C:equivalence}
An initial condition $h_0$ is always decaying if and only if \eqref{E:abstract-condition} holds.
\end{corollary}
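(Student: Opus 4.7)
The plan is to prove the two directions separately. The forward implication is a direct consequence of Lemma \ref{L:as-decay-wehave}, while the reverse implication reduces to applying Lemma \ref{L:whats-in-the-set}(ii) past time $t_0$ via the Markov structure of the directed landscape.

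\textbf{Forward direction.} Assume $h_0$ is always decaying. For each integer $n \ge 1$, Lemma \ref{L:as-decay-wehave} applied to the compact interval $[1/n, n]$ gives a full-measure event on which $\sup_{t \in [1/n, n]} \fh_t(x) \to -\infty$ as $|x| \to \infty$. On this event, for every $t \in [1/n, n]$ the continuous real-valued function $\fh_t$ is bounded above and tends to $-\infty$ at infinity, so $\{x : \fh_t(x) \ge \fh_t(0) - 1\}$ is compact and $\fh_t$ attains its maximum. Intersecting these full-measure events over $n \in \N$ yields \eqref{E:abstract-condition}.

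\textbf{Reverse direction.} Assume \eqref{E:abstract-condition} and suppose for contradiction that $h_0$ is not always decaying, so there exists $t_0 > 0$ with
$$
\P\bigl(\limsup_{|x| \to \infty} \fh_{t_0}(x) > -\infty\bigr) > 0.
$$
The plan is to treat $\fh_{t_0}$ as a new (random) initial condition at time $t_0$ and apply Lemma \ref{L:whats-in-the-set}(ii). Define $\tilde\sL(\cdot, s; \cdot, r) := \sL(\cdot, t_0 + s; \cdot, t_0 + r)$. By time-stationarity (Proposition \ref{P:land-props}.2), $\tilde\sL \eqd \sL$; by the independent-increment property II, $\tilde\sL$ is independent of the $\sigma$-algebra generated by $\sL(\cdot, s; \cdot, r)$ with $r \le t_0$, and in particular of $\fh_{t_0}$. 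By the metric composition property I, $\fh_{t_0 + s}$ is the KPZ fixed point at time $s$ started from $\fh_{t_0}$ and driven by $\tilde\sL$. Under \eqref{E:abstract-condition}, the profile $\fh_{t_0}$ is almost surely continuous and attains a maximum, hence is bounded above and is a valid upper semicontinuous initial condition. Applying Lemma \ref{L:whats-in-the-set}(ii) conditionally on $\fh_{t_0}$ (valid because $\fh_{t_0} \perp \tilde\sL$ and $\tilde\sL \eqd \sL$), one concludes that on the positive-probability event $\{\limsup_{|x|\to\infty} \fh_{t_0}(x) > -\infty\}$, almost surely $\limsup_{|x|\to\infty} \fh_{t_0 + 1}(x) = \infty$. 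This forces $\fh_{t_0 + 1}$ to have no maximum on an event of positive probability, contradicting \eqref{E:abstract-condition}.

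\textbf{Main obstacle.} The only delicate step is the conditional application of Lemma \ref{L:whats-in-the-set}(ii) with the random initial condition $\fh_{t_0}$: this is where the independent-increment property of the directed landscape is essential, providing both the distributional identity $\tilde\sL \eqd \sL$ and the independence $\fh_{t_0} \perp \tilde\sL$ needed to transfer the almost-sure conclusion of Lemma \ref{L:whats-in-the-set}(ii) to this random setting.
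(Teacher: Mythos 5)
Your proof is correct and takes essentially the same route as the paper: forward direction via Lemma \ref{L:as-decay-wehave}, reverse direction by viewing $\fh_{t_0+s}$ as a KPZ fixed point restarted from the random initial condition $\fh_{t_0}$ (the paper uses time $2t_0$, you use $t_0+1$ — an immaterial choice) and invoking Lemma \ref{L:whats-in-the-set}(ii) via the independent-increment property. You simply spell out the conditioning argument in more detail than the paper does.
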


\begin{proof}
Condition \eqref{E:xh-to-infty} implies condition \eqref{E:abstract-condition} by Lemma \ref{L:as-decay-wehave}. We prove the opposite implication with the contrapositive. Fix $t > 0$ and suppose that \eqref{E:xh-to-infty} does not hold almost surely at $t$. Then by Lemma \ref{L:whats-in-the-set} (ii), with positive probability $\limsup_{|x| \to \infty} \fh_{2t}(x) = \infty$, since we can view $\fh_{2t}$ as a KPZ fixed point at time $t$ started from the random initial condition $\fh_t$. This contradicts \eqref{E:abstract-condition}.
\end{proof}

We now turn our attention to proving absolute continuity results for the KPZ fixed point. The key point for these results is Theorem \ref{T:BrownianAC}, which is applicable since Lemma \ref{L:whats-in-the-set}(ii) implies that always decaying initial conditions are bounded above. For $y \in \R$ define the shift operator $T_y$ on functions $f:\R \to \R$ by
$$
T_y f(x) = f(x + y) - f(y).
$$
\begin{lemma}
\label{L:unique-max}
If $h_0$ is an always decaying initial condition, then for any fixed $t > 0$, almost surely $\fh_t = \fh_t(\cdot; h_0)$ attains its maximum at a unique point $Y_t$. Moreover, $T_{Y_t} \fh_t \ll_c -R$, where $R:\R\to \R$ is a two-sided Bessel-$3$ process.
\end{lemma}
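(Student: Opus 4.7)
The lemma splits into two claims: uniqueness of the argmax of $\fh_t$, and local absolute continuity of the recentered profile with respect to $-R$. The first uses Theorem \ref{T:BrownianAC} together with the fact that Brownian motion has a unique argmax on any compact interval; the second combines Theorem \ref{T:BrownianAC} with Theorem \ref{T:bessel}.

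For uniqueness, Lemma \ref{L:as-decay-wehave} applied at the fixed time $t$ gives $\fh_t(x) \to -\infty$ as $|x| \to \infty$ almost surely. Therefore almost surely $\argmax \fh_t \sset [-M, M]$ for some (random) $M \in \N$. For each fixed $M$, Theorem \ref{T:BrownianAC} (applicable since always decaying implies bounded above by Lemma \ref{L:whats-in-the-set}(ii)) gives that $\fh_t|_{[-M, M]} - \fh_t(0)$ has law absolutely continuous with respect to Brownian motion on $[-M, M]$, and Brownian motion attains its maximum uniquely on $[-M, M]$ almost surely. Absolute continuity then transfers this to $\fh_t|_{[-M,M]}$, and a countable union over $M \in \N$ yields a unique global maximizer $Y_t$.

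For the absolute continuity, fix a compact $K = [-L, L]$ and a Borel set $A$ of continuous functions on $K$ with $\P(-R|_K \in A) = 0$; the goal is to show $\P(T_{Y_t} \fh_t|_K \in A) = 0$. For $M > L$, let $Y_t^M$ denote the (a.s.\ unique) argmax of $\fh_t|_{[-M, M]}$. When $Y_t \in [-M+L, M-L]$, $Y_t$ lies in the interior of $[-M, M]$ and is the global maximizer, so $Y_t^M = Y_t$. Since these events exhaust a full-probability set as $M \to \infty$, it suffices to show that for each fixed $M$,
$$
\P\big(T_{Y_t^M} \fh_t|_K \in A,\ Y_t^M \in [-M+L, M-L]\big) = 0.
$$
This event is a Borel functional of $\fh_t|_{[-M,M]} - \fh_t(0)$ (since the operator $T_y$ is invariant under adding constants), so by Theorem \ref{T:BrownianAC} the probability is zero as soon as it is zero with this process replaced by a two-sided variance-$2$ Brownian motion $\tilde B$ on $[-M, M]$. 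Theorem \ref{T:bessel}, scaled from $[0,1]$ to $[-M, M]$, then asserts that $\tilde B(\tilde T) - \tilde B(\tilde T + s)$ on the random interval $s \in [-M - \tilde T, M - \tilde T]$ has law absolutely continuous with respect to an independent two-sided Bessel-$3$ process $R$ on the same interval, where $\tilde T$ is the argmax of $\tilde B$. On $\{\tilde T \in [-M+L, M-L]\}$ we have $K \sset [-M-\tilde T, M - \tilde T]$, so restriction to $K$ is a measurable map on the Polish space of extended continuous functions; applying this map and using independence of $R$ and $\tilde T$, the hypothesis $\P(-R|_K \in A) = 0$ forces the corresponding event for $\tilde B$ to have probability zero.

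The principal technical point is the bookkeeping between the global argmax $Y_t$ and the finite-window proxies $Y_t^M$, which is what allows us to reduce a globally defined event to one measurable with respect to $\fh_t|_{[-M, M]}$ and then apply the local Brownian absolute continuity of Theorem \ref{T:BrownianAC}. Once this reduction is in place, the rest of the argument is a routine rescaling of Theorem \ref{T:bessel} and a pushforward under the restriction map; no further estimates are needed.
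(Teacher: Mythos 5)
Your proof is correct, and for the uniqueness half it coincides with the paper's. For the absolute-continuity half you take a genuinely different route. The paper defines auxiliary processes $f_m$ equal to $T_{Y^m}\fh_t$ on $[-m - Y^m, m - Y^m]$ and extends them outside that window by conditionally independent Bessel-$3$ bridges; Theorem \ref{T:bessel} plus the Markov property then gives \emph{global} absolute continuity $f_m \ll -R$ as random continuous functions on all of $\R$, after which Lemma \ref{L:abs} is invoked with $X_n = f_n|_K$, $X = T_{Y_t}\fh_t|_K$, $Y = -R|_K$. You instead argue at the level of events: fix $K$ and a Borel set $A$ null for $-R|_K$, exhaust by the events $\{Y_t \in [-M+L, M-L]\}$ on which the window argmax $Y_t^M$ agrees with $Y_t$, push the resulting event through Theorem \ref{T:BrownianAC}, and then apply Theorem \ref{T:bessel} (rescaled to $[-M, M]$) directly to the Brownian proxy $\tilde B$, using independence of $T$ and $R$ in that theorem to factor the joint event $\{ -R|_K \in A\} \cap \{K \sset [-M-\tilde T, M-\tilde T]\}$. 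The two arguments buy roughly the same thing; yours avoids constructing the Bessel bridge extensions $f_m$ and the appeal to Lemma \ref{L:abs}, at the cost of some bookkeeping to see that the constrained event is a measurable functional of $\fh_t|_{[-M,M]} - \fh_t(0)$ and that the window argmax coincides with the global one on the restricting event — which you handle correctly. One minor slip: you cite Lemma \ref{L:as-decay-wehave} for the fact that $\fh_t(x) \to -\infty$ at the fixed time $t$, but this is just the definition of always decaying (equation \eqref{E:xh-to-infty}); Lemma \ref{L:as-decay-wehave} is the uniform-in-$t$ strengthening, which you do not need here.
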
	

\begin{proof}
	By Theorem \ref{T:BrownianAC}, $T_0 \fh_t \ll_c B$, where $B$ is a Brownian motion. Therefore almost surely on every interval $[-m, m], m \in \N$, $\fh_t$ attains its maximum at a unique point $Y^m \in [-m, m]$. 
	Since $h_0$ is always decaying and $\fh_t$ is continuous, $\fh_t$ must have a global maximum. Any global argmax for $\fh_t$ must coincide with $Y^m$ for all large enough $m$. This yields the first part of the lemma, and gives that a.s.\ $Y^m = Y_t$ for all large enough $m$.
	
Next, for $m \in \N$ we define random continuous functions $f_m:\R \to \R$ as follows. First set $f_m$ equal to $T_{Y^m} \fh_t$ on $[-m - Y^m, m - Y^m]$. Next, define $f_m$ on $(-\infty, -m - Y^m] \cup [m - Y^m, \infty)$ so that the conditional law of $-f_m|_{(-\infty, -m - Y^m] \cup [m - Y^m, \infty)}$ given $f_m|_{[-m - Y^m, m - Y^m]}$ is that of a two-sided Bessel-$3$ process equal to $T_{Y^m} \fh_t(\pm m - Y^m)$ at $\pm m - Y^m$. 
	
	By Theorems \ref{T:BrownianAC} and \ref{T:bessel}, for every $m \in \N$ we have $f_m \ll - R$, where $R$ is a two-sided Bessel-$3$ process. This absolute continuity is global, not local (i.e. it is as continuous functions from $\R \to \R$).
	Moreover, for large enough $m$, on any compact set $K$ we have $f_{m}|_K = T_{Y_t} \fh_t$. Therefore by Lemma \ref{L:abs}, $T_{Y_t} \fh_t \ll_c - R$.
	\end{proof}

Next, consider an always decaying initial condition $h_0$ and let $\fh_t$ be the KPZ fixed point evolved from $h_0$ via a directed landscape $\sL$. For $t \in (0, \infty]$, define the set
\begin{align*}
\sY^t(h_0) &= \{(s, y) \in (0, t] \X \R : y \in \argmax (\fh_s(\cdot; h_0)) \}.
\end{align*}
When $t = \infty$ we simply write $\sY(h_0)$.
This next lemma concerns the distribution of the set $\sY^s(T_{Y_t}\tilde \fh_t)$. Here 
we are using an evolved KPZ fixed point $T_{Y_t}\tilde \fh_t(\cdot; h_0)$ as an initial condition, and the KPZ fixed point $\fh$ used in the definition of the set $\sY^s$ is independent of this initial condition.

\begin{lemma}
	\label{L:Bessel-lemma}
	Let $h_0$ be an always decaying initial condition, and let $R$ be a two-sided independent Bessel-$3$ process. For any $t, s \in (0, \infty)$ we have that 
	$$\sY^s(T_{Y_t}\tilde \fh_t) \ll \sY^s(-R).$$
\end{lemma}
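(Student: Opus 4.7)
The plan is to apply Lemma~\ref{L:abs} with $X = \sY^s(T_{Y_t}\tilde\fh_t)$ and $Y = \sY^s(-R)$, using as approximating sequence $X_m := \sY^s(f_m)$ for a coupling $f_m$ adapted from the proof of Lemma~\ref{L:unique-max}. Concretely, on a joint probability space carrying $\tilde\fh_t$, the directed landscape $\sL$ used to define $\sY^s$, the Bessel-$3$ process $R$, and auxiliary independent Bessel-$3$ randomness, I build random continuous functions $f_m:\R \to \R$ such that: (a) $f_m$ equals $T_{Y^m}\tilde\fh_t$ on $[-m-Y^m, m-Y^m]$, where $Y^m$ is the argmax of $\tilde\fh_t$ on $[-m,m]$; (b) off this interval $-f_m$ is a conditional Bessel-$3$ extension, independent of $\sL$ and $R$; (c) $f_m \ll -R$ globally as random elements of $C(\R)$. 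Property (c) is exactly the output of the proof of Lemma~\ref{L:unique-max} (with $\tilde\fh_t$ in place of $\fh_t$).

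The first step is to verify $X_m \ll Y$ for every fixed $m$. Writing $\sY^s(f) = \Psi(f, \sL)$ for a measurable $\Psi$ and using that $f_m$ and $-R$ are each independent of $\sL$, the assumption $\P(\Psi(-R,\sL) \in A) = 0$ implies that for $\P_\sL$-a.e.\ $\ell$ the set $\{g : \Psi(g,\ell) \in A\}$ is $\P_{-R}$-null; by the global absolute continuity in (c) this set is also $\P_{f_m}$-null, giving $\P(X_m \in A) = 0$. The second step is to show that a.s.\ $X_M = X$ for some random $M \in \N$. For this I combine: (i) Lemma~\ref{L:as-decay-wehave} applied to $T_{Y_t}\tilde\fh_t$ (which is always decaying by Corollary~\ref{C:equivalence}), yielding a random $K$ with $\argmax \fh_u(\cdot; T_{Y_t}\tilde\fh_t) \sset [-K,K]$ for all $u \in (0,s]$; (ii) the variational formula combined with the parabolic bound on $\sL$ from Proposition~\ref{P:corollary107} and the decay $T_{Y_t}\tilde\fh_t(y) \to -\infty$ (Lemma~\ref{L:whats-in-the-set}(ii)), which confines the maximizing $y$ to $[-N,N]$ uniformly over $(u,x) \in (0,s]\X[-K,K]$ for some random $N \ge K$; (iii) for $m$ large enough (depending on $\sL$, $\tilde\fh_t$, and the auxiliary Bessel randomness), the Bessel-$3$ tail of $f_m$ outside $[-m-Y_t, m-Y_t]$, which grows like $\sqrt{|y|}$, is strictly beaten by the $-y^2/u$ term in $\sL(y,0;x,u)$, so no $|y| > m-Y_t$ realizes the sup in $\fh_u(x; f_m)$ for $(u,x) \in (0,s]\X[-K,K]$. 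Together (i)--(iii) force $\fh_u(\cdot; f_m) = \fh_u(\cdot; T_{Y_t}\tilde\fh_t)$ on $[-K,K]$ for all large $m$, and both functions attain their maximum inside $[-K,K]$, so their argmax sets coincide. Lemma~\ref{L:abs} then gives $X \ll Y$.

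The main obstacle is step (iii): the parabolic decay in $\sL(y,0;x,u)$ degrades as $u \to 0^+$, so one cannot uniformly dominate the Bessel tail in a single bound across all $u \in (0,s]$. I would handle this by splitting $(0,s] = (0,a] \cup [a,s]$ for a suitably small random $a > 0$: on $[a,s]$, Proposition~\ref{P:corollary107} gives a uniform quantitative parabolic bound that directly beats the $\sqrt{|y|}$ growth of the Bessel tail; on $(0,a]$, the sup in the variational formula concentrates near $x \in [-K,K]$ by continuity of $T_{Y_t}\tilde\fh_t$ at its maximum together with the short-time modulus estimate from Proposition~\ref{P:modulus-bound}, confining the optimizing $y$ to a neighborhood of $[-K,K]$ on which $f_m$ and $T_{Y_t}\tilde\fh_t$ already agree once $m$ is large. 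Gluing the two regimes produces the required random threshold $M$.
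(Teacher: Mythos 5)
Your overall approach is the same as the paper's: build the truncated-then-Bessel-extended approximants $f_m$, establish $f_m \ll -R$ globally, deduce $\sY^s(f_m) \ll \sY^s(-R)$ using the independence of the initial condition and the driving landscape, show $\sY^s(f_M) = \sY^s(T_{Y_t}\tilde\fh_t)$ for a random $M$, and conclude via Lemma~\ref{L:abs}. Your steps (i)--(iii) track the paper's confinement conditions (i)--(iii) closely.

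However, the ``main obstacle'' you identify in step (iii), and the fix you propose, rest on a misunderstanding of the short-time behavior of $\sL$. You claim that ``the parabolic decay in $\sL(y,0;x,u)$ degrades as $u \to 0^+$.'' It does not; it \emph{strengthens}. By Proposition~\ref{P:corollary107}, $\sL(y,0;x,u) = -(x-y)^2/u + O\bigl(u^{1/3}\log^{4/3}(u^{-1})\text{poly}\log\bigr)$, and as $u\to 0^+$ the parabola $(x-y)^2/u$ steepens while the additive error tends to $0$. So the confinement of the optimizing $y$ to a $\log$-neighborhood of $x$ holds \emph{uniformly} over $u \in (0,s]$ in a single stroke, with the worst case occurring at $u = s$, not $u \to 0^+$. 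This is precisely how the paper's condition (i) is deployed, and it means no splitting of $(0,s]$ into $(0,a] \cup [a,s]$ is needed. Moreover, your proposed workaround on the short-time piece $(0,a]$ invokes Proposition~\ref{P:modulus-bound}, but that proposition only controls $\sR$ on compact subsets $K \subset \Rd$, which are by definition bounded away from the diagonal $s=t$; the points $(y,0;x,u)$ leave every such compact set as $u \to 0^+$, so the estimate is unavailable there. Strip out the incorrect ``obstacle'' paragraph, apply the uniform parabolic bound across all of $(0,s]$, and the argument lands where the paper's does.
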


\begin{proof}
	For ease of notation, let $F = T_{Y_t} \tilde \fh_t$. By Lemma \ref{L:unique-max}, $F \ll_c -R$. Next, for each $n$, define a continuous function $F_n:\R\to \R$ so that $F_n|_{[-n, n]} = F|_{[-n, n]}$, and the conditional law of $-F_n|_{[-n, n]^c}$ given $F_n|_{[-n, n]}$ is that of a two-sided Bessel-$3$ process whose endpoints at $\pm n$ are chosen so that $F_n$ is continuous. As in the proof of Lemma \ref{L:unique-max}, $F_n \ll -R$. 
	
	Next, we aim to show that for any $s > 0$, a.s.\ there exists $N \in \N$ for which
	\begin{align}
	\label{E:fsr}
	\qquad &\sY^s(F_N) = \sY^s(F).
	\end{align}
	If we can establish \eqref{E:fsr}, then the result will follow from Lemma \ref{L:abs}. First, observe that $F$ is a.s.\ always decaying by the definition \eqref{E:xh-to-infty} and since $\fh_{t+r}(\cdot + Y_t; h_0) - \fh_t(Y_t) \eqd \fh_r(\cdot; F)$ for any $r \ge 0$. Each $F_n$ is a.s.\ always decaying by Lemma \ref{L:whats-in-the-set}(i) and a standard bound on a Bessel process (Lemma \ref{L:bessel-1}). We require this for both sides of \eqref{E:fsr} to be a.s.\ well-defined. 
	To prove \eqref{E:fsr}, we will show that for every $\ep > 0$ we can find $m \in \N, c > 0$ such that for all large enough $n \in \N$, with probability at least $1-\ep$ all of the following statements hold:
	\begin{enumerate}[nosep, label=(\roman*)]
		\item For all $r \in (0, s]$ and $(x, y) \in \R^2$ with $|y - x| \ge c \log(2 + |x| + |y|)$ we have $\sL(x, 0; y, r) < -c < \sL(0, 0; 0, r)$. 
		\item For $r \in [0, s]$, we have
		$\sup_{x \notin [-m, m]} \fh_r(x; F) < - c$,
		\item For $r \in [0, s]$, we have $\sup_{x \notin [-m, m]} \fh_r(x; F_n) < -c$.
	\end{enumerate}
	Let us first explain why these together imply \eqref{E:fsr}. Let $r \in (0, s]$ and assume (i)-(iii) hold for some sufficiently large $n$. First, since $F(0) = 0$, we have that $\sup_{x \in \R} \fh_r(x; F) \ge \sL(0, 0; 0, r) > -c$. Therefore
	\begin{equation}
	\label{E:hrxFF}
	\max_{x \in \R} \fh_r(x; F) > \max_{x \in A^c} \fh(x; F), \qquad \text{ where } A = \{x : \fh_r(x; F) \ge -c\}.
	\end{equation}
	By (ii), the set $A = \{x : \fh_r(x; F) \ge -c\}$ is contained in the interval $[-m, m]$. Moreover, since $F \le 0$ everywhere, condition (i) implies that at every $x \in A$ we have
	\begin{align*}
	\fh_r(x; F) &= \sup \{ F(y) + \sL(y, 0; x, r) : |y - x| \le c \log (2 + |x| + |y|) \}.
	\end{align*}
	Since $x \in [-m, m]$, as long as $n$ is sufficiently large given $m, c$ this implies
	\begin{equation}
	\label{E:hrxF}
	\fh_r(x; F) =\sup \{ F(y) + \sL(y, 0; x, r) : |y| \le n \}.
	\end{equation}
	Putting together \eqref{E:hrxFF} and \eqref{E:hrxF}, we get that 
	\begin{equation}
	\label{E:sMF}
	\sY^s(F) = 
	\sY^s(F|_{[-n, n]}),
	\end{equation} where we use the notation $F|_K$ for the function equal to $F$ on $K$ and $-\infty$ elsewhere. By analogous reasoning with (ii) in place of (iii), equation \eqref{E:sMF} also holds with $F_n$ in place of $F$. Since $F|_{[-n, n]} = F_n|_{[-n, n]}$, this implies \eqref{E:fsr}.
	
	We turn our attention to proving (i)-(iii).
	The existence of a constant $c$ satisfying (i) with probability $1-\ep/4$ follows from Proposition \ref{P:corollary107}. Given $c$, condition (ii) holds with probability at least $1-\ep/4$ for large enough $m$ by the fact that 
	$$
	\sup_{r \in [0, s], x \notin [-m, m]} \fh_{r}(x; F) \eqd  \sup_{r\in [t, t + s], x \notin [-m, m]} \fh_{r}(x + Y_t; h_0) - \fh_t(Y_t) \to - \infty
	$$
	a.s.\ as $m \to \infty$ (Lemma \ref{L:as-decay-wehave}). The proof of (iii) is slightly more involved. First, by a standard bound on Bessel processes (Lemma \ref{L:bessel-1}) for all $n \in \N$ and $x \ge n$ we have
	$$
	F_n(x) \le X_n - (x - n)^{1/4} - |F(n)|^{1/4},
	$$
	where $X_n, n \in \N$ is a tight sequence of random variables. Combining this bound with Proposition \ref{P:corollary107}, we find that there is a tight sequence of random variables $C_n > 0$ such that for all $x \ge n, y \in \R$ and $r \in [0, s]$ we have
	\begin{equation}
	\label{E:L-bd}
	\sL(x, 0; y, r) + F_n(x) \le C_n \log^2(2 + |y-n| + |x - n|) -(x-y)^2 + X_n - (x - n)^{1/4} - |F(n)|^{1/4}.
	\end{equation}
	Here we are able to center the logarithmic error at the point $(n, n)$ using the spatial stationarity of $\sL$ (Proposition \ref{P:land-props}.3). By \eqref{E:L-bd}, the tightness of $X_n, C_n$ and the fact that $F(n) \to -\infty$ almost surely we get that
	$$
	\sup_{y \in \R, x \ge n, r \in [0, s]} \sL(x, 0; y, r) + F_n(x) \to -\infty \qquad \text{in probability as $n \to\infty$.}
	$$
	A symmetric statement holds when we take the supremum over
	$y \in \R, x \le - n$. Therefore 
	$$
	\sup_{x \in \R} \fh_r(x; F_n|_{[-n, n]^c}) < - c
	$$
	for all $r \in [0, s]$ with probability tending to $1$ as $n \to \infty$. Moreover, 
	$$
	\sup_{x \notin [-m, m]} \fh_r(x; F_n|_{[-n, n]}) = \sup_{x \notin [-m, m]} \fh_r(x; F|_{[-n, n]}) \le \sup_{x \notin [-m, m]} \fh_r(x; F).
	$$
	This is less than $- c$ with probability at least $1-\ep/4$ by (ii). Combining these implies that for all large enough $n$, (iii) holds with probability at least $1 - \ep/2$, and so (i)-(iii) simultaneously hold with probability $1-\ep$, as desired.
\end{proof}
 
\section{The Bessel Initial Condition and a transfer principle}
\label{S:bessel}

Lemma \ref{L:Bessel-lemma} demonstrates the importance of the negative $2$-sided Bessel initial condition $-R$ for analyzing times where $|\argmax \fh_t| > 1$. In this section we study the KPZ fixed point from this special initial condition and use this to give a precise formulation of the transfer principle \eqref{E:transfer} in our setting.
\begin{theorem}
	\label{T:Bessel-IC}
	Let $\fh_t = \fh_t(\cdot; -R)$. For any fixed $t > 0$, almost surely $\mathfrak{h}_t$ has a unique argmax $Y_t$ and
	\begin{equation}
	\label{E:mathfrakh}
	T_{Y_t} \mathfrak{h}_t \eqd -R.
	\end{equation}
\end{theorem}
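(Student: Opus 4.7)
The uniqueness of the argmax is the easier piece. A two-sided Bessel-$3$ process grows like $|x|^{1/2}$ almost surely, which dominates $\log^{2/3}|x|$, so $-R$ is always decaying by Lemma \ref{L:whats-in-the-set}(i). Lemma \ref{L:unique-max} then yields a unique argmax $Y_t$ almost surely for every fixed $t > 0$, and incidentally delivers $T_{Y_t}\fh_t \ll_c -R$.

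For the distributional identity $T_{Y_t}\fh_t \eqd -R$, my plan is to realize it as an $s \to \infty$ limit of the narrow-wedge evolution. Set $G^s := T_{Y_s^{\de}}\fh_s(\cdot;\de_0)$ for the parabolic Airy$_2$ profile recentered at its argmax. Using $\fh_s(y;\de_0) = s^{1/3}\sA(s^{-2/3}y) - y^2/s$, one computes directly that $Y_s^{\de} \eqd s^{2/3} Y_1^{\de}$ and $G^s(z) \eqd s^{1/3} G^1(s^{-2/3} z)$. Since $\de_0$ is always decaying, Lemma \ref{L:unique-max} applied to $h_0 = \de_0$ gives $G^1 \ll_c -R$; then Lemma \ref{L:abs-cvgence} (in the Bessel form noted just after its statement) with $\ep = s^{-1/3}$ yields
\begin{equation*}
G^s \longrightarrow -R \quad \text{in law, in the uniform-on-compacts topology, as } s \to \infty.
\end{equation*}

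I would combine this with the Markov structure of the KPZ fixed point. Using independent increments of $\sL$ to decouple the time intervals $[0,s]$ and $[s,s+t]$, spatial stationarity (Proposition \ref{P:land-props}.3) to absorb the random shift by $Y_s^{\de}$, and the identity $\fh_{s+t}(\cdot;\de_0) = \fh_t(\cdot;\fh_s(\cdot;\de_0))$ computed with respect to the landscape on $[s,s+t]$, one obtains the distributional equality
\begin{equation*}
G^{s+t} \eqd T_{\hat Y_t}\,\tilde \fh_t(\cdot; G^s),
\end{equation*}
where $\tilde \fh_t$ is the KPZ fixed point with respect to a standard directed landscape independent of $G^s$, and $\hat Y_t$ is the a.s.\ unique argmax of $\tilde\fh_t(\cdot; G^s)$ (well-defined because $G^s$ is always decaying, thanks to the parabolic decay of $\fh_s(\cdot;\de_0)$). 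Letting $s \to \infty$: the left hand side converges in law to $-R$ by the previous display, while the right hand side converges in law to $T_{Y_t}\fh_t(\cdot;-R)$ by continuity of the KPZ variational problem in its initial condition together with the a.s.\ uniqueness of the argmax at the limiting profile. This identifies $T_{Y_t}\fh_t \eqd -R$.

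The main technical obstacle is the last continuity step: upgrading $G^s \to -R$ on compacts to convergence of the centered evolved profiles $T_{\hat Y_t}\tilde\fh_t(\cdot; G^s) \to T_{Y_t}\fh_t(\cdot;-R)$. Since the variational problem for $\tilde\fh_t(\cdot; G^s)$ takes its supremum over all of $\R$, one needs a uniform-in-$s$ tail estimate for $G^s$ to rule out the maximizer and the argmax escaping to infinity in the limit. I would handle this in the style of the proof of Lemma \ref{L:Bessel-lemma}: combine standard Bessel-type tail bounds on $G^s$ with the parabolic decay of $\sL$ from Proposition \ref{P:corollary107} to truncate $G^s$ to a large interval, then estimate both the truncation error and the continuity of the remaining (compactly supported) variational problem uniformly in $s$.
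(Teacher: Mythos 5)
Your proposal is correct and follows essentially the same route as the paper's proof via Lemma \ref{L:NW-cvg}: the paper also obtains \eqref{E:mathfrakh} by recentering the narrow-wedge profile at its argmax, passing to the Bessel limit using Lemma \ref{L:abs-cvgence}, decoupling the increment landscape, and carefully controlling escape of the variational maximizer/argmax to infinity. The only difference is cosmetic: you let $s \to \infty$ in original coordinates and use the one-step Markov identity $G^{s+t} \eqd T_{\hat Y_t}\tilde\fh_t(\cdot;G^s)$, whereas the paper rescales around $t=1$ with a parameter $\ep \to 0$; these are related by $\ep = s^{-1/3}$.
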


The main step in the proof of Theorem \ref{T:Bessel-IC} is the following.

\begin{lemma}
\label{L:NW-cvg}
For $t > 0$, let $Y_t = \argmax_{y \in \R} \sL(0,0; y, t)$ and let 
$$
F_{t, \ep} (x) = \ep^{-1} \lf(\sL(0,0; \ep^2 x + Y_t, t) - \sL(0,0; Y_t, t)\rg).
$$ 
Similarly, let
$$
\sL_{\ep}(x, s; y, t) = \ep^{-1} \lf(\sL(\ep^2 x + Y_1, 1 + \ep^3 s; \ep^2y + Y_1, 1 + \ep^3 t) \rg)
$$
denote the shifted and rescaled landscape. Then letting $K = \{(x, s; y, t) \in \Rd : s, t \in [0, 1]\}$,
we have
$$
(F_{1, \ep}, \sL_\ep|_K, F_{1 + \ep^3, \ep})  \cvgd (-R, \sL'|_K, -R')
$$
as $\ep \to 0$, where $R, R'$ are Bessel-$3$ processes, $\sL'$ is a directed landscape independent of $R$, and
\begin{equation}
\label{E:MC-limit}
-R' = T_{\tilde Y_1} \fh_1(\cdot; - R),
\end{equation}
where $\fh_t$ is driven by the noise $\sL'|_K$ and $\tilde Y_1$ is the a.s.\ unique argmax of $\fh_1(\cdot; -R)$. Here the underlying topology is uniform-on-compact convergence of continuous functions from $\R \X K \X \R$ to $\R$.
\end{lemma}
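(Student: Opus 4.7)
The strategy is to get the three components of the limit individually, establish independence of the first two via the independent increment property of $\sL$, and then deduce the third by applying the metric composition law at the intermediate time $t=1$.

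First I would prove the marginal $F_{1,\ep}\cvgd -R$. Since $F_{1,\ep}(x)=\ep^{-1}T_{Y_1}\fh_1(\ep^2 x;\de_0)$, where $\fh_t(\cdot;\de_0):=\sL(0,0;\cdot,t)$ is the KPZ fixed point from the narrow wedge (an always decaying, bounded-above initial condition), Lemma \ref{L:unique-max} gives $T_{Y_1}\fh_1(\cdot;\de_0)\ll_c -R$. Applying the Bessel-$3$ version of Lemma \ref{L:abs-cvgence} noted immediately after its statement yields the claim in the uniform-on-compact topology.

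Second, I would show $\sL_\ep|_K\cvgd\sL'|_K$ with $\sL'$ independent of $F_{1,\ep}$. Both $Y_1$ and $F_{1,\ep}$ are measurable with respect to the landscape over times $[0,1]$, while $\sL_\ep|_K$ is measurable with respect to the landscape over times $[1,1+\ep^3]$; by Property II these families are independent. Conditioning on $Y_1$ and invoking spatial stationarity, time stationarity, and KPZ scale invariance (Proposition \ref{P:land-props}.3, .2, .1 with $q=\ep$) then shows $\sL_\ep|_K\eqd \sL|_K$ for every $\ep$. Combining with the first step, we obtain joint convergence $(F_{1,\ep},\sL_\ep|_K)\cvgd(-R,\sL'|_K)$ with independence of the limiting factors.

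Third, I would relate $F_{1+\ep^3,\ep}$ to the above via Property I (metric composition) applied at the intermediate time $1$: a direct substitution of $x=\ep^2\bar x+Y_1$ and $y=\ep^2 y'+Y_1$ gives
\begin{equation*}
\ep^{-1}\bigl[\sL(0,0;\ep^2 y'+Y_1,1+\ep^3)-\sL(0,0;Y_1,1)\bigr]=\tilde g_\ep(y'):=\max_{\bar x}\bigl[F_{1,\ep}(\bar x)+\sL_\ep(\bar x,0;y',1)\bigr].
\end{equation*}
Setting $\alpha_\ep:=\ep^{-2}(Y_{1+\ep^3}-Y_1)$, uniqueness of $Y_{1+\ep^3}$ identifies $\alpha_\ep$ as the unique argmax of $\tilde g_\ep$ and yields $F_{1+\ep^3,\ep}=T_{\alpha_\ep}\tilde g_\ep$. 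By the joint convergence from the previous step and continuous mapping (once the variational max is localized, see below), $\tilde g_\ep$ converges in law on compact sets to $\tilde g(y'):=\max_{\bar x}[-R(\bar x)+\sL'(\bar x,0;y',1)]=\fh_1(y';-R)$, where the fixed point is driven by $\sL'$. Since $-R$ is always decaying by Lemma \ref{L:whats-in-the-set}(i) (as $\sqrt{|x|}$ grows faster than $\log^{2/3}|x|$), Lemma \ref{L:unique-max} guarantees that $\tilde g$ has a unique argmax $\tilde Y_1$ a.s., so continuity of the argmax functional at functions with unique maxima gives $\alpha_\ep\cvgd\tilde Y_1$ and hence $F_{1+\ep^3,\ep}=T_{\alpha_\ep}\tilde g_\ep\cvgd T_{\tilde Y_1}\tilde g=-R'$, jointly with the other two coordinates.

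The main obstacle is justifying $\tilde g_\ep\cvgd\tilde g$ on compact $y'$-sets uniformly in $\ep$, since the $\max_{\bar x}$ ranges over all of $\R$. This reduces to an $\ep$-uniform tail bound confining the maximizer to a tight bounded region in $\bar x$, which can be obtained from parabolic decay of $\sL_\ep(\bar x,0;y',1)$ (via Proposition \ref{P:corollary107} and scale invariance) together with the $\sqrt{|\bar x|}$-type growth of $-F_{1,\ep}$ (via standard Bessel tail bounds along the lines of Lemma \ref{L:bessel-1} applied to the Bessel process dominating $-F_{1,\ep}$ through Lemma \ref{L:unique-max}). This localization is analogous to the tail control carried out in the proof of Lemma \ref{L:Bessel-lemma}.
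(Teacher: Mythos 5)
Your overall architecture matches the paper's: you write $F_{1+\ep^3,\ep}=T_{\alpha_\ep}\tilde g_\ep$ via metric composition, show $\tilde g_\ep\cvgd\tilde g=\fh_1(\cdot;-R)$ by joint convergence of the pieces plus localization of the $\bar x$-maximum, and then want $\alpha_\ep\cvgd\tilde Y_1$. The marginal convergence, independence of $(F_{1,\ep},\sL_\ep|_K)$, and the identification $\tilde g=\fh_1(\cdot;-R)$ are all correct and agree with the paper.

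There is, however, a genuine gap at the last step. Uniform-on-compact convergence $\tilde g_\ep\cvgd\tilde g$ together with a.s.\ uniqueness of $\argmax\tilde g$ does \emph{not} by itself give $\argmax\tilde g_\ep\cvgd\argmax\tilde g$: you must first establish that the argmax locations $\alpha_\ep$ form a tight family, i.e.\ that the $y'$-maximizer of $\tilde g_\ep$ does not escape to infinity along the sequence $\ep\to0$. You name the ``main obstacle'' as localizing the $\bar x$-max for fixed compact $y'$, but that part is comparatively easy (it only needs $F_{1,\ep}\le 0$, parabolic decay of $\sL_\ep$, and a tight lower bound on $\max\tilde g_\ep$; no growth bound on $-F_{1,\ep}$ is required). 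The real difficulty is tightness of $\alpha_\ep$, which you do not address, and which is where the paper expends its effort. For that step the paper controls the quantities $X_{s,\ep}=\sup\{x:F_{1,\ep}(x)\ge -|x|^{1/5}\}$ (and its infimum counterpart), showing they are tight in $\ep$.

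Moreover, the justification you sketch for a growth bound on $-F_{1,\ep}$ does not work as written. You invoke ``the Bessel process dominating $-F_{1,\ep}$ through Lemma~\ref{L:unique-max},'' but Lemma~\ref{L:unique-max} gives only \emph{local} absolute continuity $T_{Y_1}\fh_1\ll_c -R$ on compact sets; this is not a global stochastic domination, nor does it transfer to an $\ep$-uniform tail bound on $F_{1,\ep}$ over the growing window $|x|\le c\ep^{-2}$ where the localization is needed. The paper's fix is to split: for $|x|>c\ep^{-2}$ use the exact identity $F_{1,\ep}(x)=\ep^{-1}F_{1,1}(\ep^2x)$ together with the Airy-sheet parabolic decay bound (Proposition~\ref{P:corollary107-airysheet}), which gives $F_{1,\ep}(x)\le -|x|^{1/2}$ on a high-probability event; and only on the bounded window $|x|\le c\ep^{-2}$ use local absolute continuity with respect to $-R$ (via Lemma~\ref{L:unique-max} and Lemma~\ref{L:abs-cvgence}) plus the Bessel growth bound of Lemma~\ref{L:bessel-1}. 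Without some version of this two-regime argument, the tightness of $\alpha_\ep$ remains unproved and the final step of your proposal is unsupported.
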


\begin{proof}
The convergence of the marginals $F_{1, \ep}, F_{1 + \ep^3, \ep}$ to Bessel processes $R, R'$ is immediate from Lemma \ref{L:unique-max} and the comment following Lemma \ref{L:abs-cvgence}. Also, $\sL_\ep|_K \eqd \sL'|_K$ by shift and scale invariance of $\sL$ and $F_{1, \ep}$ and $L_{\ep}|_K$ are independent for all $\ep$.

Putting all these observations together, we get that the collection $(F_{1, \ep}, \sL_\ep|_K, F_{1 + \ep^3, \ep}), \ep > 0$ is tight in $\ep$, and any subsequential distributional limit $(-R, \sL'|_K, -R')$, has the desired marginal distributions for $-R, \sL'|_K, -R'$ and has $-R, \sL'|_K$ independent. To complete the proof we just need to establish \eqref{E:MC-limit}. We start with the version of \eqref{E:MC-limit} that holds in the prelimit. Setting
\begin{equation}
\label{E:1pe}
\fh_{1, \ep}(y) := \max_{x \in \R} F_{1, \ep}(x) + \sL_\ep(x, 0; y, 1),
\end{equation}
we have
$
F_{1 + \ep^3, \ep} = T_{Y_{1, \ep}} \fh_{1, \ep},
$
where $Y_{1, \ep}$ is the (almost surely unique) argmax of $\fh_{1, \ep}$. Concretely then, it suffices to show that $(F_{1, \ep}, \sL_{1, \ep}|_K, \fh_{1, \ep}, Y_{1, \ep}) \cvgd (-R, \sL'|_K, \fh_1, \tilde Y_1)$, where $\fh_1 = \fh_1(\cdot; - R)$ and $\tilde Y_1$ are as in the statement of the lemma. Here the topology of convergence for $\fh_{1, \ep}$ is the uniform-on-compact topology.
Since $Y_{1, \ep} = \argmax \fh_{1, \ep}$ and $Y_1 = \argmax \fh_1$, we can break this convergence up into two steps:
\begin{enumerate}[nosep, label=(\roman*)]
	\item Showing $(F_{1, \ep}, \sL_\ep|_K, \fh_{1, \ep}) \cvgd (-R, \sL'|_K, \fh_1)$.
	\item Showing $(\fh_{1, \ep}, Y_{1, \ep}) \cvgd (\fh_1, \tilde Y_1)$.
\end{enumerate}
For (i), for $y \in \R$ let $A_{\ep, y} = \argmax_{x \in \R} F_{1, \ep}(x) + \sL_\ep(x, 0; y, 1)$. By the joint uniform-on-compact convergence of $F_{1, \ep}, \sL_{\ep}|_K$, to prove (i) it suffices to show that the collections of random variables 
$$
\inf \{x \in \bigcup_{y \in [a, b]} A_{\ep, y} \}, \qquad \sup \{x \in \bigcup_{y \in [a, b]} A_{\ep, y} \}, \qquad \ep > 0
$$
are tight for every compact interval $[a, b]$. This follows from combining the following three observations.
\begin{enumerate}[nosep, label=(\alph*)]
	\item $F_{1, \ep} \le 0$ for all $\ep$.
	\item $|\sL_\ep (x, 0; y, 1) +(x-y)^2| \le C_\ep + c\log^{2/3}(2 + |x| + |y|)$ for a collection of identically distributed random variables $C_\ep,\ep > 0$ and some $c > 0$. This uses scale invariance of $\sL$ and Proposition \ref{P:corollary107-airysheet}.
	\item For any $a < b$, the collection of random variables $\inf_{y \in [a, b]} F_{1, \ep}(y), \ep > 0$ is tight in $(-\infty, 0]$ since $F_{1, \ep} \cvgd -R$, uniformly on compact sets.
\end{enumerate}
We move on to (ii). By the convergence in (i), $\fh_{1, \ep} \cvgd \fh_1$ where $\fh_1$ is a copy of the KPZ fixed point started from $-R$. The function $-R$ is almost surely an always decaying initial condition by Lemma \ref{L:whats-in-the-set} and standard Bessel properties (Lemma \ref{L:bessel-1}), and hence the argmax $\tilde Y_1$ of $\fh_1$ is a.s.\ unique. Point (ii) will then follow if we can show that $Y_{1, \ep}, \ep>0$ is tight. Define
$$
X_{s, \ep} = \sup \{x \in \R : F_{1, \ep}(x) \ge  - |x|^{1/5} \}, \qquad X_{i, \ep} = \inf \{x \in \R : F_{1, \ep}(x) \ge  - |x|^{1/5} \}.
$$
By the tail bounds in point (b) above on $\sL_\ep$, to prove that $Y_{1, \ep}, \ep \in (0, 1)$ is tight it suffices to show that $X_{s, \ep}, X_{i, \ep}, \ep \in (0, 1)$ are both tight. The arguments are symmetric, so we focus on $X_{s, \ep}$. First, $F_{1, 1}$ is simply $\sL(0, 0; \cdot, 1)$, shifted and recentered at its maximum. Therefore by Proposition \ref{P:corollary107-airysheet} we have
$$
F_{1, 1}(x) < - x^{3/2}
$$
for all $x > C$ for some random $C > 0$. Now fix $\de > 0$ and choose $c > 0$ so that $\P(C > c) \le \de/2$. Using that  
$
F_{1, \ep}(x) = \ep^{-1} F_{1, 1} (\ep^2 x),
$  
on the event $\{C > c\}$, for $x > c \ep^{-2}$ we have
$$
F_{1, \ep}(x) \le -\ep^{-1} \ep^{3} x^{3/2} \le -x^{1/2} < -x^{1/5}.
$$
Hence $X_{s, \ep} < c \ep^{-2}$ on the event $\{C > c\}$. On the other hand, the collection of random variables
$$
X^*_{s, \ep} := \sup \{x \in [-c\ep^2, c \ep^2] : F_{1, \ep}(x) \ge  - |x|^{1/5} \}
$$
is tight for $\ep \in (0, 1)$ by the local absolute continuity of $F_{1, 1}$ with respect to $-R$ (Lemma \ref{L:unique-max}), and the bound in Lemma \ref{L:bessel-1} on the growth of Bessel processes. Therefore for some constant $c' > 0$, $\p(X_{s, \ep}^* > c') \le \de/2$ for all $\ep \in (0, 1)$ and so
$$
\p(X_{s, \ep} > c') \le \p(C > c) + \p(X_{s, \ep}^* > c') \le \de,
$$ 
for all $\ep > 0$, yielding the desired tightness.
\end{proof}

\begin{proof}[Proof of Theorem \ref{T:Bessel-IC}]
The $t=1$ case follows from \eqref{E:MC-limit} in Lemma \ref{L:NW-cvg} and the general case follows from KPZ scale invariance of $(\sL, - R)$.
\end{proof}

Theorem \ref{T:Bessel-IC}, along with scale invariance of $\sL$ and $R$ give corresponding properties of the set $\sY(-R)$.
\begin{prop}
\label{P:special-IC}
Let $s > 0$, and let $Y_s = \argmax \fh_s(\cdot; -R)$. Then
$$
\sY(-R) \eqd \sY_{s, Y_s}(-R) :=\{(t-s, x - Y_s): (t, x) \in \sY(-R), t > s\}. 
$$
Also, for $q > 0$ define $\sL_q(x, s; y, t) = q^{-1} \sL_q(q^2 x, q^3 s; q^2 y, q^3 t), R_q(t) = q^{-1} R(q^2 t)$, and 
$$
\sY_q(-R) := \{(q^3 s, q^2 x) : (s, x) \in \sY_q(-R)\}.
$$
Then $(\sL, R, \sY(-R)) \eqd (\sL_q, R_q, \sY_q(-R))$.
\end{prop}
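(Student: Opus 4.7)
The plan is to prove Part 1 by identifying the shifted set $\sY_{s, Y_s}(-R)$ as the argmax set of a KPZ fixed point run from a fresh Bessel initial condition driven by an independent directed landscape, and to derive Part 2 from the joint scaling invariance of $(\sL, R)$.

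For Part 1, I would first introduce the time-shifted, space-shifted fixed point
\begin{equation*}
\tilde \fh_r(x) := \fh_{s+r}(x + Y_s; -R) - \fh_s(Y_s; -R), \qquad r \ge 0,
\end{equation*}
and the spacetime-shifted landscape $\tilde \sL(u, r_1; v, r_2) := \sL(u + Y_s, s+r_1; v+Y_s, s+r_2)$. Combining the metric composition property of $\sL$ with \eqref{E:hopf-lax} and changing variables gives
\begin{equation*}
\tilde \fh_r(x) = \sup_{y \in \R} T_{Y_s}\fh_s(y) + \tilde \sL(y, 0; x, r),
\end{equation*}
so $\tilde \fh$ is exactly the KPZ fixed point started from the initial condition $T_{Y_s}\fh_s$ driven by $\tilde \sL$. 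Since a common additive constant is irrelevant for argmaxes, $\sY_{s, Y_s}(-R)$ coincides with the set $\sY(T_{Y_s}\fh_s)$ computed via $\tilde \sL$. What remains is the key claim
\begin{equation*}
(T_{Y_s}\fh_s,\, \tilde \sL) \eqd (-R,\, \sL'),
\end{equation*}
where $R$ is a two-sided Bessel-$3$ process and $\sL'$ is an independent directed landscape; once this is established, the distributional identity for $\sY$ follows from the definition. The marginal $T_{Y_s}\fh_s \eqd -R$ is Theorem \ref{T:Bessel-IC}. For the joint statement, $(Y_s, T_{Y_s}\fh_s)$ is measurable with respect to $\sF_s := \sig\{\sL(\cdot, r_1; \cdot, r_2) : 0 \le r_1 < r_2 \le s\}$, and by the independent increment property $\sF_s$ is independent of the post-$s$ landscape increments. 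Conditionally on $\sF_s$, the shift $Y_s$ is a constant, so spatial and time stationarity (Proposition \ref{P:land-props}.2--3) imply that $\tilde \sL$ is a directed landscape; unconditioning yields that $\tilde \sL \eqd \sL$ and is independent of $T_{Y_s}\fh_s$.

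Part 2 then follows directly from joint scaling: Proposition \ref{P:land-props}.1 gives $\sL \eqd \sL_q$, while the Bessel-$3$ process has self-similarity $q^{-1} R(q^2 \cdot) \eqd R(\cdot)$. Since $R$ is independent of $\sL$ and the same spacetime rescaling is applied, $(\sL_q, R_q) \eqd (\sL, R)$. The set $\sY(-R)$ is a measurable functional of this pair, and $\sY_q(-R)$ is precisely the image of $\sY(-R)$ under the matching coordinate rescaling, giving the triple identity $(\sL, R, \sY(-R)) \eqd (\sL_q, R_q, \sY_q(-R))$. The main obstacle is the spatial-shift step in Part 1: applying spatial stationarity at the \emph{random} location $Y_s$ is only legitimate after conditioning on $\sF_s$, and one has to check that the resulting conditional distribution of $\tilde \sL$ does not depend on $Y_s$ before unconditioning. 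This is the technical heart of the argument; the rest reduces to routine manipulations of the Hopf--Lax formula and the symmetries of $\sL$.
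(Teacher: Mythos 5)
Your proposal is correct and matches the paper's approach: it unpacks the Markov/shift argument behind the paper's ``immediate from Theorem~\ref{T:Bessel-IC} and the independent increment property'' statement for Part~1, and uses the same joint scaling plus measurability argument for Part~2. One small slip: $(Y_s, T_{Y_s}\fh_s)$ is measurable with respect to $\sigma(R)\vee\sF_s$ rather than $\sF_s$ alone, since the initial condition $-R$ is itself random; this does not affect the argument because $R$ is independent of $\sL$, so $\sigma(R)\vee\sF_s$ is still independent of the post-$s$ landscape increments.
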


\begin{proof}
The first equality in distribution is immediate from Theorem \ref{T:Bessel-IC} and the independent increment property for $\sL$, which guarantees that values of $\sL$ at times in $[s, \infty)$ are independent of $\fh_s(\cdot; -R)$. The equality in distribution $(\sL, R) \eqd (\sL_q, R_q)$ follows from Brownian scale invariance of $R$ and KPZ scale invariance of $\sL$, and the stronger claim that $(\sL, R, \sY(-R)) \eqd (\sL_q, R_q, \sY_q(-R))$ follows since there is a measurable function $f$ such that $\sY_q(-R) = f(\sL_q, R_q)$ for any $q > 0$.
\end{proof}

We also have mixing and ergodicity properties under the shifts introduced in Proposition \ref{P:special-IC}. We will only need one such result here.
\begin{lemma}
\label{L:0-1law}
The sequence of processes
$
(\sL_q, R_q, \sY_q(-R)), q \in \{2^k: k \in \Z\}
$
is ergodic.
\end{lemma}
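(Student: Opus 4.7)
My plan is to identify the sequence shift with a single measure-preserving transformation on the underlying sample space and then establish ergodicity via a tail-triviality argument at the origin. Define $T(\sL, R) = (\sL_2, R_2)$. The scaling identities $(T\sL)_{2^k} = \sL_{2^{k+1}}$ and $(TR)_{2^k} = R_{2^{k+1}}$, combined with the fact that $\sY_q(-R)$ is a deterministic function of $(\sL, R)$, show that $T$ advances the sequence $X_k := (\sL_{2^k}, R_{2^k}, \sY_{2^k}(-R))$ by one index. Proposition \ref{P:special-IC} makes $T$ measure-preserving, so ergodicity of the sequence reduces to showing that every $T$-invariant event $A \in \sigma(\sL, R)$ has probability $0$ or $1$.

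For each $\delta > 0$, set $K_\delta = \{(x, s; y, t) \in \Rd : x, y, s, t \in [-\delta, \delta]\}$ and $\sG_\delta := \sigma(\sL|_{K_\delta}, R|_{[-\delta, \delta]})$. The first step is to push $A$ into the tail $\bigcap_\delta \overline{\sG_\delta}$. For each $M$, approximate $A$ by some $A_M \in \sG_M$ with $\P(A \triangle A_M) < 1/M$. Since $A = T^n A$ mod null sets for every $n \in \Z$, measure-preservation gives $\P(A \triangle T^n A_M) < 1/M$. Unwrapping the definition of $T$, $T^n A_M$ is measurable with respect to $\sG_{\delta_n}$ for $\delta_n := M 2^{-2n} \to 0$. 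Choosing $n$ large enough given $\delta > 0$ and then letting $M \to \infty$ places $A$ in $\overline{\sG_\delta}$ for every $\delta > 0$.

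The second step is to show $\bigcap_\delta \sG_\delta$ is $\P$-trivial. By the metric composition property of $\sL$, any value $\sL(x, s; y, t)$ with $s < 0 < t$ factors through $\max_z \sL(x, s; z, 0) + \sL(z, 0; y, t)$, so $\sigma(\sL|_{K_\delta}) = \sigma(\sL|_{K_\delta^+}) \vee \sigma(\sL|_{K_\delta^-})$ where $K_\delta^+, K_\delta^-$ are the restrictions to pairs with both times nonnegative, resp.\ nonpositive; the independent increments property makes these independent. Analogously $\sigma(R|_{[-\delta, \delta]}) = \sigma(R|_{[0, \delta]}) \vee \sigma(R|_{[-\delta, 0]})$ with independence, since $R(0) = 0$ for the two-sided Bessel-$3$ process. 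Since $\sL \perp R$, the four families are mutually independent, and so are their $\delta \uparrow \infty$ joins. Each $\delta \to 0$ tail is $\P$-trivial: the two $\sL$-tails by Proposition \ref{P:tail-trivial} (with time stationarity yielding the negative-side analogue) and the two $R$-tails by Blumenthal's $0$-$1$ law for Bessel-$3$. The standard commutation of $\bigcap$ and $\vee$ for independent decreasing families with mutually independent joins then gives triviality of $\bigcap_\delta \sG_\delta$, and combining with Step~1 yields $\P(A) \in \{0, 1\}$.

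The main obstacle is the compact-region approximation in Step~1, since $\sY(-R)$ is a priori a global functional of $(\sL, R)$. Producing $A_M \in \sG_M$ requires tightness of the argmaxes $\{Y_s : s \in (0, M]\}$ in some bounded spatial window with arbitrarily high probability; this follows from Lemma \ref{L:as-decay-wehave} applied to the always-decaying initial condition $-R$ (together with the observation that for $s$ near $0$ the argmax is near that of $-R$, hence near $0$). Given this tightness, $\sY(-R) \cap ((0, M] \times \R)$ becomes measurable with respect to $\sG_{M'}$ for $M'$ large enough up to small error, completing the approximation. The commutation of $\bigcap$ and $\vee$ invoked at the end of Step~2 is the classical Kolmogorov-style fact for independent product factors and goes through without additional input.
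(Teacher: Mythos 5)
Your approach matches the one the paper intends: Proposition~\ref{P:special-IC} makes the dyadic scaling map $T$ measure-preserving, and ergodicity reduces to triviality of the germ $\sigma$-algebra of $(\sL,R)$ at the spacetime origin, obtained by combining Proposition~\ref{P:tail-trivial}, Blumenthal's $0$-$1$ law for $R$, and independence of the pieces. Two small points worth flagging. The claimed equality $\sigma(\sL|_{K_\delta}) = \sigma(\sL|_{K_\delta^+}) \vee \sigma(\sL|_{K_\delta^-})$ with $K_\delta^\pm$ truncated to $|x|,|y|\le\delta$ is actually false as stated: for $s<0<t$ the metric composition $\sL(x,s;y,t)=\max_{z\in\R}\sL(x,s;z,0)+\sL(z,0;y,t)$ runs over all $z\in\R$, and the maximizing $z$ need not lie in $[-\delta,\delta]$, so the inclusion $\subseteq$ can fail. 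The fix is to drop the spatial truncation on the two time-halves and instead dominate $\sG_\delta$ by
$\sigma\bigl(\sL(x,s;y,t): 0\le s< t\le\delta,\ x,y\in\R\bigr) \vee \sigma\bigl(\sL(x,s;y,t): -\delta\le s< t\le 0,\ x,y\in\R\bigr) \vee \sigma(R|_{[0,\delta]}) \vee \sigma(R|_{[-\delta,0]})$;
the first factor is exactly the filtration in Proposition~\ref{P:tail-trivial}, the second its time-reflected analogue, the four factors remain mutually independent, and the rest of Step~2 goes through unchanged. Also, the closing paragraph on tightness of the argmaxes is not needed: since each $\sY_q(-R)$ is a measurable function of $(\sL,R)$, any $T$-invariant event $A$ already lies in $\sigma(\sL,R)=\bigvee_M \sG_M$, so the approximants $A_M\in\sG_M$ with $\P(A\symdif A_M)<1/M$ exist by the standard $\sigma$-algebra approximation lemma, with no geometric input about $\sY$.
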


\begin{proof}
This follows from Proposition \ref{P:special-IC}, Blumenthal's $0$-$1$ law for the Bessel-$3$ process, and Proposition \ref{P:tail-trivial}.
\end{proof}

Next we turn to the main object of the paper: non-uniqueness times for the KPZ fixed point. Recalling the notation $\sT_{\ge k}(h_0)$ from the introduction, we have the following result.

\begin{prop}
	\label{P:alk}
For every $k \in \N$, there exists a constant $\al_k \in [0, 1]$ and values $\be_k \in \{0, \infty\}$ such that for any always decaying initial condition $h_0$, a.s.\ for every interval $I = [a, b] \sset [0, \infty], a < b$ we have
\begin{align}
\label{E:dim-sTk}
\dim(\sT_{\ge k}(h_0) \cap I) &= \al_k, \qquad \mathand \qquad 
 \#(\sT_{\ge k}(h_0) \cap I) = \be_k.
\end{align}
Here $\# A$ is the cardinality of $A$ if $A$ is finite, and equal to $\infty$ if $A$ is infinite.
\end{prop}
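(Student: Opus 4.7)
The plan is to first establish the proposition for the distinguished initial condition $h_0 = -R$, exploiting the scale and forward shift invariance of $\sY(-R)$ recorded in Proposition \ref{P:special-IC} together with the $0$-$1$ law from Lemma \ref{L:0-1law}. Once the conclusion is in hand for $-R$, the absolute continuity of Lemma \ref{L:Bessel-lemma} combined with the Markov property of the KPZ fixed point transfers it to an arbitrary always decaying $h_0$.

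Since $\sT_{\ge k}$ depends only on cardinalities of argmaxes and is unaffected by spatial shifts, Proposition \ref{P:special-IC} implies that $\sT_{\ge k}(-R)$ is scale invariant as a random subset of $(0, \infty)$ and satisfies the forward shift invariance $(\sT_{\ge k}(-R) \cap (s, \infty)) - s \eqd \sT_{\ge k}(-R)$ for every $s > 0$. Applying Lemma \ref{L:0-1law} to the scale-invariant events $\{\sT_{\ge k}(-R) = \emptyset\}$ and $\{\dim \sT_{\ge k}(-R) \le c\}$ for each rational $c$, I find that $\sT_{\ge k}(-R)$ is deterministically either always empty or always nonempty a.s., and that $D := \dim \sT_{\ge k}(-R)$ is a.s.\ equal to a constant $\al_k \in [0, 1]$ (using the convention $\dim \emptyset = 0$). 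In the empty case set $\be_k := 0$. In the nonempty case, $\inf \sT_{\ge k}(-R)$ is scale-invariant and a.s.\ lies in $\{0, \infty\}$; combined with non-emptiness and $\sT_{\ge k}(-R) \sset (0, \infty)$, this forces $\inf \sT_{\ge k}(-R) = 0$ a.s., and since a nonempty finite subset of $(0, \infty)$ has strictly positive infimum, $\#\sT_{\ge k}(-R) = \infty$ a.s.; set $\be_k := \infty$.

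The crux is to show that $\dim(\sT_{\ge k}(-R) \cap [a, b]) = \al_k$ and $\#(\sT_{\ge k}(-R) \cap [a, b]) = \be_k$ hold a.s.\ for each fixed $[a, b] \sset (0, \infty)$. By the forward shift invariance it suffices to treat intervals $(0, \ep]$. Scale invariance yields $D_{0+, \ep} := \dim(\sT_{\ge k}(-R) \cap (0, \ep]) \eqd D_{0+, q\ep}$ for every $q > 0$, so these random variables share a common distribution $\mu$ independent of $\ep$. By countable stability of Hausdorff dimension, $D_{0+, q\ep} \uparrow D = \al_k$ a.s.\ as $q \to \infty$, so $\mu$ must also equal the weak limit $\de_{\al_k}$, forcing $D_{0+, \ep} = \al_k$ a.s.\ for every $\ep > 0$. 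The identical argument with $\#$ in place of $\dim$, using that the total cardinality equals $\be_k \in \{0, \infty\}$, yields the cardinality claim. A countable union over rational intervals, followed by sandwiching an arbitrary $[a, b] \sset (0, \infty)$ between rational sub- and super-intervals, upgrades these statements to hold simultaneously for every such $[a, b]$ a.s.

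For the transfer to an arbitrary always decaying $h_0$: since $\sT_{\ge k}$ is invariant under spatial shifts, Lemma \ref{L:Bessel-lemma} gives that for every $t, s > 0$, the random subset $\sT_{\ge k}(\fh_t(\cdot; h_0)) \cap (0, s]$ (computed using an independent copy of the KPZ fixed point) is absolutely continuous with respect to $\sT_{\ge k}(-R) \cap (0, s]$. Since the almost-sure property from the previous paragraph is $\sY^s$-measurable, it transfers to hold a.s.\ with $\fh_t(\cdot; h_0)$ in place of $-R$. By the Markov property of the KPZ fixed point and the independent increment property of $\sL$, this is equivalent to the statement that a.s., for every $[a, b] \sset (t, t+s]$, the set $\sT_{\ge k}(h_0) \cap [a, b]$ has dimension $\al_k$ and cardinality $\be_k$. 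Taking a countable union over rational $0 < t < t + s$ extends this to every $[a, b] \sset (0, \infty)$, and the extension to arbitrary $[a, b] \sset [0, \infty]$ with $a < b$ is immediate since $\sT_{\ge k}(h_0) \sset (0, \infty)$. The main obstacle is the argument in the preceding paragraph, where scale invariance combined with monotone convergence to a deterministic limit forces the common distribution to collapse to a point mass at $\al_k$; the rest is bookkeeping with countable $0$-$1$ laws and the Brownian-type absolute continuity machinery of Section \ref{S:ic-abs-cont}.
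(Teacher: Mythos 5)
Your proof is correct and follows the same overall architecture as the paper: establish the result for the Bessel initial condition $-R$ via the scale/shift invariance and ergodicity from Proposition~\ref{P:special-IC} and Lemma~\ref{L:0-1law}, then transfer to arbitrary always decaying $h_0$ through the absolute continuity of Lemma~\ref{L:Bessel-lemma} combined with the Markov property of $\fh$ and the independent-increment structure of $\sL$.

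The two places where your packaging differs from the paper's, both valid: (a) to show $\dim(\sT_{\ge k}(-R)\cap[0,b])$ is a.s.\ constant, the paper applies the ergodic theorem directly to the sequence $\dim(\sT_{\ge k}(-R)\cap[0,2^{-3q}b])$ and identifies the a.s.\ value of the supremum (which equals the $q=0$ term by monotonicity) with the essential supremum; you instead first extract global constancy of $\dim\sT_{\ge k}(-R)$ from the $0$-$1$ law and then localize via the observation that a random variable whose law is invariant under $q\mapsto q\ep$-scaling and which converges a.s.\ to a constant must already equal that constant. (b) To rule out $\be_k\in(0,\infty)$, the paper uses superadditivity of cardinality over disjoint intervals (giving $2\be_k\le\be_k$), whereas you note that $\inf\sT_{\ge k}(-R)$ has a scale-invariant law on $[0,\infty]$, hence is a.s.\ in $\{0,\infty\}$, and nonemptiness plus $\sT_{\ge k}(-R)\sset(0,\infty)$ forces $\inf=0$, hence infinitely many points. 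Both variants deliver the same conclusion with comparable effort; the paper's version has the minor advantage of not needing the separate identification of $\inf$'s distribution, while yours cleanly separates the global and local steps.
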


\begin{proof}
We begin by proving the proposition for the random initial condition $h_0 = -R$ and restricting to the case when $b \ne \infty$. We start with the first equality in \eqref{E:dim-sTk}.
It suffices to prove that the statement holds a.s.\ for every rational interval $[a, b]$, since Hausdorff dimension is monotone with respect to set inclusion. For any $k \in \N$ and $b > 0$, we have
\begin{equation}
\label{E:sTkR}
\dim(\sT_{\ge k}(-R) \cap [0, b]) = \sup_{q = 0,1, 2, \dots} \dim(\sT_{\ge k}(-R) \cap [0, 2^{-3q} b]).
\end{equation}
By Lemma \ref{L:0-1law}, and the fact that Hausdorff dimension is unchanged under linear dilations of a set, the sequence $\dim(\sT_{\ge k}(-R) \cap [0, 2^{-3q} b]), q \in \N$ is ergodic. Therefore
a.s.,
\begin{equation}
\label{E:sup-eqn}
\sup_{q \in \N} \dim(\sT_{\ge k}(-R) \cap [0, 2^{-3q} b]) = \al_k,
\end{equation}
where $\al_k$ is the supremum of the support of the random variable $\dim(\sT_{\ge k}(-R) \cap [0, b])$. Moreover, \eqref{E:sup-eqn} still holds if we replace $0, 1, \dots$ by any sequence $\{n_0, \dots, \}$ for any $n_0 \in \Z$, so $\al_k$ is a.s.\ the Hausdorff dimension of $\dim(\sT_{\ge k}(-R) \cap [0,b'])$ for any $b' > 0$ as well. The result transfers to general intervals $[a, b]$ by time stationarity of $\sT_{\ge k}(-R)$, Proposition \ref{P:special-IC}.

We modify the above proof to establish the second equality in \eqref{E:dim-sTk} when $h_0 = - R$. First, the function $\#$ is monotone with respect to set inclusion, and unchanged under linear dilations of a set, so the proof above goes through verbatim to show that there exists $\be_k \in \{0, 1, \dots, \infty\}$ such that the second equality in \eqref{E:dim-sTk} holds with $h_0 = - R$
for any interval $I$. We can rule out the possibility that $\be_k \notin \{0, \infty\}$ since by the inequality 
$$
\#(\sT_{\ge k}(h_0) \cap [0, 1]) + \#(\sT_{\ge k}(h_0) \cap [2, 3]) \le \# (\sT_{\ge k}(h_0) \cap [0, 3]),
$$
we have $2 \be_k \le \be_k$.

Now consider a general initial condition $h_0$. By the result for $h_0=-R$ and the absolute continuity in Lemma \ref{L:Bessel-lemma}, a.s.\ \eqref{E:dim-sTk} holds for any rational interval $[a, b]$ with $a > 0$. The result for general $a, b$ (allowing for $a = 0$ or $b = \infty$) follows from countable stability and monotonicity of the functions $\dim$ and $\#$.
\end{proof}

Proposition \ref{P:alk} allows us to transfer information about Hausdorff dimension and set size from one initial condition to all others. In particular, Proposition \ref{P:alk} shows that if we can find always decaying initial conditions $h, h'$ such that 
$$
\p(\dim(\sT_{\ge k}(h)) \ge \al) > 0 \qquad \text{ and } \qquad \p(\dim(\sT_{\ge k}(h')) \le \al) > 0,
$$
then $\al_k = \al$.
 This can be thought of as version of the transfer principle \eqref{E:transfer} tailored to studying Hausdorff dimension.

\section{The upper bound}
\label{S:upper}

\begin{theorem}
	\label{T:upper-bd}
	With notation as in Proposition \ref{P:alk}, we have $\al_2 \le 2/3, \al_3 \le 1/3, \al_4 = 0,$ and $\be_5 = 0$.
\end{theorem}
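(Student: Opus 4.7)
The plan is to invoke the transfer principle of Proposition \ref{P:alk} and establish all four bounds for the single initial condition $h_0 = -R$, with $R$ a two-sided Bessel-$3$ process. For this initial condition, Theorem \ref{T:Bessel-IC} supplies the key identity $T_{Y_t}\mathfrak{h}_t \eqd -R$ for every fixed $t > 0$, so at any reference time the profile of $\mathfrak{h}_t$ relative to its maximizer is exactly a two-sided Bessel-$3$ profile. Combined with the H\"older-$1/3^-$ continuity in time for $\mathfrak{h}$ that one extracts from Proposition \ref{P:modulus-bound} via the variational formula, this is essentially all the structure the argument needs.

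To organize the counting, decompose by the scale of pairwise separation between argmaxes: for $\delta > 0$ let $\sT_{\ge k}^\delta$ denote the set of times at which $\mathfrak{h}_t$ admits at least $k$ argmax points with minimum pairwise distance $\ge \delta$. Then $\sT_{\ge k}(-R) = \bigcup_{n \in \N} \sT_{\ge k}^{2^{-n}}$, so by countable stability of Hausdorff dimension (and by the analogous monotonicity for emptiness) it suffices to bound $\dim(\sT_{\ge k}^\delta \cap [a, b])$ for each fixed $\delta > 0$ and compact $[a, b] \subset (0, \infty)$. Restrict to a probability-$(1 - o(1))$ event on which $\argmax \mathfrak{h}_t \subset [-M, M]$ for every $t \in [a, b]$ (Lemma \ref{L:as-decay-wehave}) and on which the $1/3^-$-in-time H\"older bound on $\mathfrak{h}_{\cdot}(y)$ for $y \in [-M, M]$ holds with a given constant $C$. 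Partition $[a, b]$ into intervals $I_i^\epsilon$ of length $\epsilon$ with left endpoints $t_i$, and suppose $I_i^\epsilon \cap \sT_{\ge k}^\delta \ne \emptyset$ is witnessed by a time $t$ and argmax points $y_1, \dots, y_k \in [-M, M]$ with pairwise distances $\ge \delta$. The H\"older bound applied at the fixed spatial points $y_j$ yields $\mathfrak{h}_{t_i}(y_j) \ge \max \mathfrak{h}_{t_i} - \gamma$ for each $j$, where $\gamma := C' \epsilon^{1/3 - \eta}$ (absorbing logarithmic factors into $\eta$), while the pairwise separation $\ge \delta$ is preserved since the $y_j$ are fixed spatial points.

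Applying Theorem \ref{T:Bessel-IC} at time $t_i$ converts this into the following event for a Bessel-$3$ process $R$: there exist $k$ points in $[-2M, 2M]$ with pairwise distance $\ge \delta$ and $R \le \gamma$ at each. Among any such configuration at least $k - 1$ of the points must sit at distance $\ge \delta/2$ from the origin, so iterating the standard Bessel hitting estimate $\mathbb{P}_r(R \text{ returns to } [0, \gamma]) \lesssim \gamma/r$ via the strong Markov property (splitting the two-sided process at $0$) bounds the probability of the event by $C_{M, \delta} (\gamma/\sqrt{\delta})^{k-1}$. Summing over the $\lfloor (b - a)/\epsilon \rfloor$ intervals, the $\alpha$-pre-Hausdorff mass of the resulting cover is $O(\epsilon^{\alpha - 1 + (k - 1)(1/3 - \eta)})$, which tends to $0$ as $\epsilon \to 0$ for every $\alpha > (4 - k)/3$. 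This gives $\dim(\sT_{\ge k}^\delta \cap [a, b]) \le (4 - k)/3$ for $k \in \{2, 3, 4\}$; for $k = 5$ the exponent $(k - 1)(1/3 - \eta) = 4/3 - 4\eta$ strictly exceeds $1$ for $\eta$ small, so the total cover mass itself tends to $0$ and $\sT_{\ge 5}^\delta \cap [a, b]$ is empty almost surely. Countable unions over dyadic $\delta$, rational $a < b$, and truncation parameters $M, C$ yield all four bounds for $h_0 = -R$, and Proposition \ref{P:alk} transfers them to arbitrary always decaying $h_0$. The main technical step is the two-sided Bessel hitting estimate; it becomes elementary once one observes that pairwise separation $\ge \delta$ forces $k - 1$ of the $k$ points to sit at distance $\ge \delta/2$ from the origin, after which the standard return estimate iterates.
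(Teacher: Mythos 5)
Your proposal is correct and takes essentially the same approach as the paper: work with $h_0 = -R$ via Proposition \ref{P:alk}, exploit the stationarity from Theorem \ref{T:Bessel-IC}, combine the H\"older-$1/3^-$ temporal modulus (Lemma \ref{L:Holder-1/3}) with the Bessel near-minima estimate (Lemma \ref{L:Bessel-bd} / Corollary \ref{C:shifted-bessel-bd}), and run a covering/first-moment argument over an $\ep$-grid of $[a,b]$. The only cosmetic difference is the decomposition: you slice $\sT_{\ge k}$ by a minimum pairwise separation $\delta$ between argmaxes, whereas the paper slices by fixing a $k$-tuple of disjoint rational intervals $I_1, \dots, I_k$ containing them; both reduce to the same Bessel bound $\sim (\gamma/\sqrt{\delta})^{k-1}$, and your observation that pairwise separation $\ge \delta$ forces $k-1$ of the points to lie at distance $\ge \delta/2$ from the maximizer is exactly the mechanism that the paper implements by discarding the interval $I_j$ closest to the origin in Lemma \ref{L:Bessel-bd}.
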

To get the upper bound, we work with the stationary initial condition $h_0 = - R$. We will need two results:
\begin{itemize}[nosep]
	\item A H\"older continuity estimate on the KPZ fixed point started from $-R$, and
	\item A bound on the probability that a Bessel process has multiple near-minima.
\end{itemize}
We start with the H\"older continuity estimate on $\fh_t$. This is similar to \cite[Lemma 3.3]{corwin2021exceptional} and is a fairly straightforward consequence of the regularity in Propositions \ref{P:corollary107} and \ref{P:modulus-bound}.
\begin{lemma}
\label{L:Holder-1/3}
Let $h_0$ be an always decaying initial condition and let $b > 0$. Then there exists a random $B \in (0, \infty)$ such that
\begin{equation}
\label{E:hsh0}
\fh_s(x, h_0) = \fh_s(x, h_0|_{[-B, B]}) \qquad \text{ for all } s \in [0, 1], x \in [-b, b].
\end{equation}
Here the initial condition $h_0|_K$ equals $h_0$ on $K$ and $-\infty$ off $K$.
In particular, there exists a random  constant $C \in (0, \infty)$ such that
\begin{align*}
\lf|\mathfrak{h}_s(x; h_0) - \mathfrak{h}_r(x; -h_0) \rg| &\le \max_{|y| \le B} |\sL(y, 0; x, s) - \sL(y, 0; x, r)| \\
&\le C \log^{2/3}(2|r - s|^{-1})(r - s)^{1/3},
\end{align*}
for all $x \in [-b, b]$ and $1/2 \le s < r \le 1$. 
\end{lemma}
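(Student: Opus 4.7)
My plan is to prove the two assertions in sequence: first localize the maximizer in the variational formula to a random bounded interval $[-B, B]$, then deduce the H\"older estimate from Proposition \ref{P:modulus-bound}. The localization step is the substantive one; the rest is routine.

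For the localization \eqref{E:hsh0}, the case $s = 0$ is trivial as long as $B \ge b$, since $\fh_0(\cdot; h_0) = h_0$ and $[-b, b] \sset [-B, B]$. For $s \in (0, 1]$, I use \eqref{E:hopf-lax} and show that any maximizer $y^* = y^*(s, x)$ of $y \mapsto h_0(y) + \sL(y, 0; x, s)$ satisfies $|y^*| \le B$. Pick $y_0 \in \R$ with $h_0(y_0) > -\infty$, which exists since $h_0 \not\equiv -\infty$, and set $M = \sup h_0 < \infty$ (finite by Lemma \ref{L:whats-in-the-set}(ii) and upper semicontinuity). Since $s \mapsto s^{1/3}\log^{4/3}(2/s)$ is bounded on $(0, 1]$, Proposition \ref{P:corollary107} gives a random constant $C' > 0$ such that
$$
\lf|\sL(y, 0; x, s) + \frac{(x-y)^2}{s}\rg| \le C' \log^2(|y| + b + 3)
$$
for all $s \in (0, 1]$, $x \in [-b, b]$, and $y \in \R$. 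Applying this upper bound at $y^*$ and the corresponding lower bound at $y_0$ inside the defining inequality $h_0(y^*) + \sL(y^*, 0; x, s) \ge h_0(y_0) + \sL(y_0, 0; x, s)$, and multiplying by $s \le 1$, I obtain
$$
(y^* - x)^2 \le (y_0 - x)^2 + M - h_0(y_0) + 2C' \log^2(|y^*| + |y_0| + b + 3).
$$
Since $(y^* - x)^2 \ge (|y^*| - b)^2$ grows quadratically in $|y^*|$ while the right side grows only polylogarithmically, there is a random $B$ (independent of $s$ and $x$) such that $|y^*| \le B$, proving \eqref{E:hsh0}.

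For the H\"older bound, the first part restricts the supremum in \eqref{E:hopf-lax} to $y \in [-B, B]$, and the elementary inequality $|\sup_y f(y) - \sup_y g(y)| \le \sup_y |f(y) - g(y)|$ immediately yields the first bound in the display. For the second bound, I would write $\sL(y, 0; x, s) = \sR(y, 0; x, s) - (y-x)^2/s$ and apply Proposition \ref{P:modulus-bound} to $\sR$ on the compact set $K = \{(y, 0; x, t) : y \in [-B, B], x \in [-b, b], t \in [1/2, 1]\}$. Since only the time coordinates differ, the proposition yields a random $C_K$ with $|\sR(y, 0; x, s) - \sR(y, 0; x, r)| \le C_K |s-r|^{1/3} \log^{2/3}(|s-r|^{-1}+1)$. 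The remaining parabolic contribution $(y-x)^2(s-r)/(rs)$ is $O(|s-r|)$ for $r, s \in [1/2, 1]$ and $y, x$ bounded, which for $|s - r| \le 1$ is dominated by the H\"older term.

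The main obstacle is keeping the localization uniform as $s \to 0$: the error in Proposition \ref{P:corollary107} contains a $\log(1/s)$ factor that threatens to blow up, but it is exactly counterbalanced by the $s^{1/3}$ prefactor, so the product stays bounded. This keeps the error polylogarithmic in $|y|$ for all $s \in (0, 1]$, allowing the parabolic penalty $-(y-x)^2/s$ to dominate uniformly and pin $y^*$ in a bounded interval.
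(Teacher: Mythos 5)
Your proof is correct and uses essentially the same ingredients as the paper's: Proposition \ref{P:corollary107} to show the parabolic penalty in $\sL$ dominates the logarithmic error (pinning the Hopf--Lax maximizer in a bounded interval, uniformly over $s \in (0,1]$ and $x \in [-b,b]$ because $s^{1/3}\log^{4/3}(2/s)$ stays bounded as $s\to 0$), and Proposition \ref{P:modulus-bound} restricted to $r,s \in [1/2,1]$ for the time-H\"older estimate. The paper establishes \eqref{E:hsh0} by citing the argument around \eqref{E:hrxF} in the proof of Lemma \ref{L:Bessel-lemma}, whereas you give a self-contained localization by comparing a putative maximizer $y^*$ directly to a fixed anchor $y_0$ with $h_0(y_0)>-\infty$; both hinge on the same quadratic-versus-polylogarithmic tradeoff.
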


\begin{proof}
Equation \eqref{E:hsh0} is exactly what we showed in \eqref{E:hrxF}; we give proof details there. Note that in establishing \eqref{E:hrxF} we only used that the initial condition was bounded above, which for always decaying initial conditions follows from Lemma \ref{L:whats-in-the-set}(ii). The second display then follows from \eqref{E:hsh0} and Proposition \ref{P:modulus-bound}. Here we have restricted to the region where $r, s \in [1/2, 1]$ so that all the points $(y, 0; x, s), (y, 0; x, r)$ lie in a common compact set.
\end{proof}

We now address the probability that a Bessel process has multiple near-minima.
\begin{lemma}
\label{L:Bessel-bd}
Fix $k \in \N$ and a $k$-tuple of disjoint closed intervals $I = (I_1, \dots, I_k)$, where $I_j = [a_j, b_j]$, ordered so that $b_{i-1} < a_i$ for all $i \ge 2$. Set
$
m_I = \min \{a_i - b_{i-1} : i \in \{2, \dots, k\}\}.
$ 
Let $R$ be a two-sided Bessel-$3$ process and define the event
$$
A(R, I, \ep) = \{\forall  i \in \{1, \dots, k\}, \min_{x \in I_i} R(x) \le \ep \}.
$$
Then for all $\ep > 0$ we have
$
\p A(R, I, \ep) \le (\ep /\sqrt{m_I})^{k-1}.
$
\end{lemma}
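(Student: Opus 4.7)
The plan is to prove the bound by induction on $k$. The base case $k=1$ is immediate since $\P A(R, I, \ep) \le 1 = (\ep/\sqrt{m_I})^{0}$. For the inductive step, write $J = (I_1, \dots, I_{k-1})$ and observe that
$$
A(R, I, \ep) \subset A(R, J, \ep) \cap \{\min_{x \in I_k} R(x) \le \ep\},
$$
with $A(R, J, \ep)$ measurable with respect to the natural filtration $\mathcal{F}_{b_{k-1}} = \sigma(R(u) : u \le b_{k-1})$. Applying the Markov property of the Bessel-$3$ process at time $b_{k-1}$ and using $a_k - b_{k-1} \ge m_I$, one gets
$$
\P\!\left(\min_{x \in I_k} R(x) \le \ep \,\Big|\, \mathcal{F}_{b_{k-1}}\right) \le \P\!\left(\inf_{t \ge m_I} X_t \le \ep \,\Big|\, X_0 = R(b_{k-1})\right),
$$
where $X$ is a Bessel-$3$ process. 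The argument reduces to the uniform estimate $\sup_{y \ge 0} \P(\inf_{t \ge s} X_t \le \ep \mid X_0 = y) \le \ep/\sqrt{s}$ with $s := m_I$, which when combined with the inductive hypothesis $\P A(R, J, \ep) \le (\ep/\sqrt{m_J})^{k-2} \le (\ep/\sqrt{m_I})^{k-2}$ (using $m_J \ge m_I$ in the non-trivial regime $\ep \le \sqrt{m_I}$) closes the induction.

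For the key uniform estimate, I would use the Markov property of $X$ at time $s$ together with the classical Bessel-$3$ hitting formula $\P(\inf_t X_t \le \ep \mid X_0 = z) = \min(\ep/z, 1)$, giving
$$
\P(\inf_{t \ge s} X_t \le \ep \mid X_0 = y) = \E[\min(\ep/X_s, 1) \mid X_0 = y] \le \ep \cdot \E[1/X_s \mid X_0 = y].
$$
Realizing Bessel-$3$ as the Euclidean norm of a variance-$2$ three-dimensional Brownian motion, $\E[1/X_s \mid X_0 = y]$ can be identified as the three-dimensional Newtonian potential of an isotropic Gaussian source at distance $y$ from its center, equal to $\operatorname{erf}(y/(2\sqrt{s}))/y$. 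This expression is decreasing in $y$ with supremum $1/\sqrt{\pi s}$ attained as $y \to 0$, yielding the uniform bound $\ep/\sqrt{\pi s} \le \ep/\sqrt{s}$.

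The main obstacle is that $R(b_{k-1})$ may be arbitrarily small --- in fact it is typically of order $\ep$ on the conditioning event $A(R, J, \ep)$ --- so the naive hitting-probability bound $\ep/R(b_{k-1})$ is useless. The point of inserting the fixed waiting time $s = m_I$ is that it allows the Bessel-$3$ process to diffuse to scale $\sqrt{m_I}$, which is precisely what makes $\E[1/X_s]$ bounded uniformly in the starting value via the Gaussian potential computation. A minor technicality is that for the two-sided Bessel-$3$ process the Markov transition at $b_{k-1} < 0$ involves a bridge-to-$0$ piece rather than an unconditioned Bessel-$3$; this is handled by either splitting intervals at $0$ (using independence of the two halves together with the automatic satisfaction $\min_{I} R \le R(0) = 0$ for any interval containing $0$) or by noting that the same potential-theoretic bound applies to the relevant portion of the conditional law.
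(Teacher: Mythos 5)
Your core computation is exactly the paper's: condition on the past, reduce the running minimum over an interval to $U\,R(a)$ with $U$ uniform (via Revuz--Yor), and bound the relevant hitting probability by a Gaussian Newtonian-potential estimate $\E[1/X_s]\le 1/\sqrt{\pi s}$. That part is correct, and your $\operatorname{erf}$ formula and the bound $\sup_{y\ge 0}\P(\inf_{t\ge s}X_t\le \ep\mid X_0=y)\le \ep/\sqrt{\pi s}$ are right.

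However, your handling of the two-sided process is not a ``minor technicality,'' and as sketched it has a real gap. Peeling off the rightmost interval and conditioning on $\mathcal F_{b_{k-1}}$ breaks down precisely when $b_{k-1}<0\le a_k$ with $a_k$ small: there $R|_{[b_{k-1},\infty)}$ given $\mathcal F_{b_{k-1}}$ is not a Bessel-$3$ started from $R(b_{k-1})$ (it is forced through $0$ at time $0$), and because $\min_{I_k}R$ depends only on $R|_{[0,\infty)}$, the conditional probability equals the unconditional $\P(\min_{I_k}R\le\ep)\le \ep/\sqrt{\pi a_k}$. If $a_k\ll m_I$ this is not $\lesssim\ep/\sqrt{m_I}$ --- it can be close to $1$ --- so the peeling step gives no usable factor. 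Your first fix (``split at $0$, automatic satisfaction for an interval containing $0$'') then falls short: after splitting, your inductive bound only yields $(\ep/\sqrt{m_I})^{|I^-|-1}(\ep/\sqrt{m_I})^{|I^+|-1}$, which is a full power short of the claimed $(\ep/\sqrt{m_I})^{k-1}$ whenever both sides are nonempty. Your second fix (``the same potential-theoretic bound applies to the conditional law'') is simply false for small $a_k$, since the bridge to $0$ makes $\E[1/R(a_k)\mid\mathcal F_{b_{k-1}}]=1/\sqrt{\pi a_k}$, which blows up.

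The missing ingredient, which the paper supplies, is to \emph{choose which interval to sacrifice}: let $j=\argmin_i d(I_i,0)$, drop $I_j$, and observe that every remaining interval then lies at distance $\ge m_I/2$ from $0$. By independence of the two halves of a two-sided Bessel-$3$, one then proves the \emph{one-sided} statement that for intervals at distance $\ge m_I/2$ from the origin with mutual gaps $\ge m_I$, one gets a factor $\ep/\sqrt{m_I}$ \emph{per interval} (exponent $|I^\pm|$, not $|I^\pm|-1$). Summing the exponents $(j-1)+(k-j)=k-1$ closes the argument without case analysis. Your induction on $k$ with exponent $k-1$ is the wrong intermediate statement; you need the sharper one-sided exponent and the ``drop the interval nearest $0$'' step to make the two-sided bookkeeping work out.
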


\begin{proof}
Let $j \in \{1, \dots, k\}$ be chosen so that $d(I_j, 0)$ is minimized, and let $I^- = (I_1, \dots, I_{j-1})$ and $I^+ = (I_{j+1}, \dots, I_k)$. By construction, all intervals in $I^-$ are contained in $(-\infty, -m_I/2]$ and all intervals in $I^+$ are contained in $[m_I/2,\infty)$.
Therefore since $A(R, I, \ep) \sset A(R, I^-, \ep) \cap A(R, I^+, \ep)$ and $R(\cdot)|_{[0, \infty)}$ and $R(- \; \cdot)|_{[0, \infty)}$ are independent Bessel processes, we have that 
$$
\P A(R, I, \ep) \le \P A(R, I^-, \ep) \P A(R, I^+, \ep),
$$
and the lemma will follow if we can show that $\P A(R, I^-, \ep) \le (\ep /\sqrt{m_I})^{j-1}$ and $\P A(R, I^+, \ep) \le (\ep /\sqrt{m_I})^{k-j}$. The proofs of these two claims are identical, so we just focus on the latter. 

Letting $L_i = \min_{x \in I_i} R(x)$, it is enough to show that for every $i = j+1, \dots, k$ we have
\begin{equation}
\label{E:Micep}
\P( L_i < \ep \; | \; R|_{[0, b_{i-1} \vee 0]}) \le \ep /\sqrt{m_I}.
\end{equation}
First, by the Markov property for $R$, $L_i$ only depends on $R|_{[0, b_{i-1} \vee 0]}$ through $R(a_i)$. Moreover, $L_i \ge L_i' := \min_{x \in [a_i, \infty)} R(x)$ and the distribution of $L_i'$ given $R(a_i)$ is $U R(a_i)$, where $U$ is an independent uniform random variable on $[0, 1]$, see \cite[Chapter VI, Corollary 3.4]{revuz2013continuous}. Next, the conditional distribution of $R(a_i)$ given $R|_{[0, b_{i-1} \vee 0]}$ stochastically dominates the (unconditional) distribution of $R(a_i - b_{i-1} \vee 0)$. In particular, using the representation of $R$ as the magnitude of $3$-dimensional Brownian motion of variance $2$, we have
$$
R(a_i) \succeq \sqrt{2a_i - 2b_{i-1}  \vee 0}\sqrt{N_1^2 + N_2^2 + N_3^2} \ge \sqrt{m_I} \sqrt{N_1^2 + N_2^2 + N_3^2}. 
$$
where the notation $\succeq$ denotes stochastic domination given $R|_{[0, b_{i-1} \vee 0]}$, and the $N_i$ are independent standard normals. Therefore 
$$
\P( L_i < \ep \; | \; R|_{[0, b_{i-1} \vee 0]}) \le \P(U\sqrt{m_I} \sqrt{N_1^2 + N_2^2 + N_3^2}  \le \ep),
$$
and a computation then yields \eqref{E:Micep}.
\end{proof}

\begin{corollary}
	\label{C:shifted-bessel-bd}
	Let all notation be as in Lemma \ref{L:Bessel-bd}, and for a $k$-tuple $I = (I_1, \dots, I_k)$, write $I + b := (I_1 + b, \dots, I_k + b)$. 
	Then for all $\ep, b > 0$ we have
	$$
	\p \lf(\bigcup_{x \in [-b, b]} A(R, I + x, \ep) \rg) \le (8b/m_I + 2)(\ep\sqrt{2/m_I})^{k-1}.
	$$
\end{corollary}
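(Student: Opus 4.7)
The plan is to reduce the corollary to Lemma \ref{L:Bessel-bd} by discretizing the continuous family of shifts $x \in [-b, b]$ in a way that only slightly enlarges the intervals, so that the separation condition $m_{I'} \gtrsim m_I$ survives.

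First, I would set $\de = m_I/4$ and choose a finite grid $x_1 < x_2 < \ddd < x_N$ in $[-b, b]$ with consecutive spacing at most $\de$, taking $N = \cl{2b/\de} + 1 \le 8b/m_I + 2$. For each $j$, define the enlarged tuple $I' = (I_1', \ddd, I_k')$ with $I_i' := [a_i, b_i + m_I/4]$. The key geometric observation is that if $x \in [x_j, x_j + \de]$ and $y \in I_i + x$, then $a_i + x_j \le y \le b_i + x_j + \de$, so $I_i + x \sset I_i' + x_j$. Consequently, whenever $A(R, I + x, \ep)$ holds for some $x \in [x_j, x_j + \de]$, the event $A(R, I' + x_j, \ep)$ also holds.

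Next I would check that the enlarged tuple $I'$ still satisfies the gap condition needed to apply Lemma \ref{L:Bessel-bd}. The gap between $I_{i-1}'$ and $I_i'$ equals $a_i - b_{i-1} - m_I/4 \ge m_I - m_I/4 = 3m_I/4$, so $m_{I'} \ge 3m_I/4 > m_I/2$. Lemma \ref{L:Bessel-bd} therefore gives, for every $j$,
$$
\P A(R, I' + x_j, \ep) \le (\ep/\sqrt{m_{I'}})^{k-1} \le (\ep \sqrt{2/m_I})^{k-1}.
$$
A union bound over the $N \le 8b/m_I + 2$ grid points yields
$$
\P \lf(\bigcup_{x \in [-b, b]} A(R, I + x, \ep) \rg) \le \sum_{j=1}^N \P A(R, I' + x_j, \ep) \le (8b/m_I + 2)(\ep \sqrt{2/m_I})^{k-1},
$$
which is the claimed estimate.

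There is really no serious obstacle here: the only subtle point is choosing the mesh size $\de$ so that the resulting enlarged intervals have gap at least $m_I/2$ (ensuring the $\sqrt{2/m_I}$ factor comes out cleanly) while simultaneously keeping the grid small enough that the union bound produces the factor $8b/m_I + 2$. Taking $\de = m_I/4$ balances these two requirements exactly.
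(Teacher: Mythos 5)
Your proof is correct and follows essentially the same strategy as the paper: discretize the shift $x$, enlarge the intervals slightly to absorb the discretization error, then apply Lemma \ref{L:Bessel-bd} with a union bound. The only cosmetic difference is that the paper enlarges each $I_j$ symmetrically by $m_I/4$ on both sides (giving $m_{\tilde I} = m_I/2$ exactly and allowing any nearest grid point to be used), whereas you enlarge only on the right and choose the grid point from below; both yield the required $\sqrt{2/m_I}$ factor and the count $8b/m_I + 2$.
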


\begin{proof}
First, define $\tilde I$ so that $\tilde I_j = [a_j - m_I/4, b_j + m_I/4]$. Then 
$$
\bigcup_{x \in [-b, b]} A(R, I + x, \ep) \sset \bigcup_{x \in [-b, b] \cap m_I \Z/4 } A(R, \tilde I + x, \ep).
$$
The probability of the latter union can be bounded with a union bound and Lemma \ref{L:Bessel-bd}, using that $m_{\tilde I} = m_I/2$.
\end{proof}

\begin{proof}[Proof of Theorem \ref{T:upper-bd}]
We first prove the bounds on $\al_k$ for $k = 2, 3, 4$. By Proposition \ref{P:alk} it is enough to show that a.s.\, for each of these values of $k$ we have
\begin{equation}
\label{E:dimdim}
\dim(\sT_{\ge k}(-R) \cap [2/3, 1]) \le (4-k)/3.
\end{equation}
To prove this, it is enough to show that for every $k$-tuple of ordered disjoint closed rational intervals $I = (I_1, \dots, I_k)$, a.s.\ we have
\begin{align}
\label{E:dimsTk}
\dim(\sT_k(-R, I)& \cap [2/3, 1]) \le (4-k)/3, \quad \text{ where } \quad \\
\nonumber
\sT_k(-R, I) &:= \{t \in (0, \infty) : \forall i = 1, \dots, k \text{ we have } \sY(-R) \cap (\{t\} \X I_i) \ne \emptyset \}. 
\end{align}
Indeed, \eqref{E:dimdim} follows from \eqref{E:dimsTk} by countable stability of Hausdorff dimension and the fact that $\sT_{\ge k}(-R) = \bigcup_I \sT_k(-R, I)$, where the union is over all ordered $k$-tuples of disjoint closed rational intervals. 

Fix $b > 0, \de \in (0, 1/3)$, and $I = (I_1, \dots, I_k)$ such that $I_j \sset [-b, b]$ for all $j$. Let $\fh = \fh(\cdot; -R)$. By Lemma \ref{L:Holder-1/3}, almost surely there exists a constant $C > 0$ such that
$$
|\fh_s(x) - \fh_r(x)| \le C(r-s)^{1/3-\de}
$$
for all $x \in [-b, b], s < r \in [1/2, 1]$. In particular, for all small enough $\ep > 0$ we have that
\begin{align}
\label{E:sTkRI}
\sT_k(-R, I) \cap [2/3, 1] &\sset \bigcup_{i \in \ep^{3 + \de} \Z \cap [1/2, 1]}  [i, i + \ep^{3 + \de}]\indic(G(\fh_i, I, \ep)), \qquad \text{ where} \\
\nonumber
 G(\fh_i, I, \ep) &= \Big\{ \max_{x \in I_j} \fh_i(x) \ge \max_{x \in \R} \fh_i(x) - \ep \text{ for all } i =1, \dots, k \Big\}.
\end{align}
Now, let
$$
B = \sup \{x \ge 0 : \fh_s(z) = \max_{y \in \R} \fh_s(y) \text{ for some } s \in [1/2, 1], z \in \{-x, x\} \}.
$$
By Lemma \ref{L:as-decay-wehave}, $B \in (0, \infty)$ almost surely. On the event where $B \le a$, we have
\begin{equation}
\label{E:Gh-set}
G(\fh_i, I, \ep) = A(T_{Y_i} \fh_i, I - Y_i, \ep) \sset \bigcup_{x \in [-a, a]} A(T_{Y_i} \fh_i, I - x, \ep). 
\end{equation}
where the notation is as in Theorem \ref{T:Bessel-IC} and Lemma \ref{L:Bessel-bd}. Now, by Theorem \ref{T:Bessel-IC} and Corollary \ref{C:shifted-bessel-bd}, the probability of the event on the right side of \eqref{E:Gh-set} is bounded above by $c\ep^{k-1}$ for some $c > 0$ depending on $I, a, k$. Therefore for any $\ga > 0$ by a union bound we have that
\begin{align}
\label{E:bound}
\E \Big( \sum_{i \in \ep^{3 + \de} \Z \cap [1/2, 1]}  (\ep^{3 + 10 \de})^\ga \indic(G(\fh_i, I, \ep)) \Big \mid B \le a \Big) \le c \ep^{3\ga + \de \ga + k - 1 - 3 - \de}.
\end{align} 
For $k= 2, 3, 4$, the right side above tends to $0$ with $\ep$ as long as $\ga > (4-k)/3 + \de/3$. Since $B < \infty$ almost surely, by Markov's inequality this gives that the Hausdorff dimension of $\sT_k(-R, I) \cap [2/3, 1]$ is almost surely bounded above by $(4-k)/3 + \de/3$. Taking $\de \to 0$ completes the proof.

To show that $\be_k = 0$, by Proposition \ref{P:alk} it is enough to show that almost surely, $\sT_5(-R) \cap [2/3, 1] = \emptyset$. Again, it is enough to show that $\sT_5(-R, I) \cap [2/3, 1] = \emptyset$ a.s.\ for every collection $I = (I_1, \dots, I_5)$ of ordered disjoint rational intervals. The identities \eqref{E:sTkRI} and \eqref{E:bound} still hold here. Set $\ga = 0$ in \eqref{E:bound}. Then the left hand side of \eqref{E:bound} is bounded below by 
$$
\p(\sT_5(-R, I) \cap [2/3, 1] \ne \emptyset \; | \; B \le a)
$$
and the right hand side of \eqref{E:bound} tends to $0$ with $\ep$ for $\de < 1/3$.  Therefore by Markov's inequality, $\sT_5(-R, I) \cap [1/2, 1]= \emptyset$ a.s.
\end{proof}

\section{The lower bound}
\label{S:lower}

\begin{theorem}
	\label{T:lower-bd}
	We have $\al_2 \ge 2/3$ and $\al_3 \ge 1/3$.
\end{theorem}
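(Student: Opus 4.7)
The plan is to combine the transfer principle of Proposition \ref{P:alk} with a Frostman energy argument applied to a tailored initial condition. By Proposition \ref{P:alk}, it suffices to exhibit, for each $k \in \{2, 3\}$, one always decaying $h_0$ and one interval $I \sset (0, \infty)$ with $\P(\dim(\sT_{\ge k}(h_0) \cap I) \ge (4-k)/3) > 0$. I would take $h_0 = \sum_{i=1}^k \delta_{x_i}$, a sum of narrow wedges at widely separated points $x_1, \dots, x_k$. After an appropriate KPZ rescaling together with the coupling of Proposition \ref{P:landscape-mixing}, the single-wedge maximum processes $M_t^i := \max_y \sL(x_i, 0; y, t)$ may be treated as $k$ independent copies of $M_t := \max_y \sL(0, 0; y, t)$ on a fixed compact interval $I$, up to a coupling error that vanishes as the $x_i$ spread out. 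The variational formula \eqref{E:hopf-lax} gives $\fh_t(y; h_0) = \max_i \sL(x_i, 0; y, t)$, and since the argmaxes from distinct wedges remain pairwise separated almost surely on $I$,
$$
\sT_{\ge k}(h_0) \cap I \supseteq \sE := \{t \in I : M_t^1 = \dots = M_t^k\}.
$$

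I would then lower-bound $\dim \sE$ via the Frostman energy method applied to the random measures
$$
\mu_\epsilon(dt) = \epsilon^{-(k-1)}\, \indic\{|M_t^i - M_t^1| \le \epsilon \text{ for } i = 2, \dots, k\}\, dt
$$
on $I$. A first moment bound $\E\mu_\epsilon(I) \gtrsim 1$ follows from Theorem \ref{T:BrownianAC}, which via the Brownian local regularity of $\fh_t$ forces each difference $M_t^i - M_t^1$ to have bounded density at $0$. The content is in the second moment: using independence across $i$ together with the identity $p_{(M^i - M^1)_s,\, (M^i - M^1)_t}(0, 0) = \int p_{M_s, M_t}(a,b)^2\, da\, db$, the energy integral reduces to a bound of the form
$$
\E \iint_{I \X I} |t - s|^{-\beta}\, \mu_\epsilon(dt)\, \mu_\epsilon(ds) \lesssim \iint_{I \X I} |t - s|^{-\beta} \Bigl(\int p_{M_s, M_t}(a, b)^2\, da\, db\Bigr)^{k-1} dt\, ds.
$$
If one can establish $\int p_{M_s, M_t}(a, b)^2\, da\, db \lesssim (t - s)^{-1/3}$, then the right-hand side is finite precisely when $\beta + (k-1)/3 < 1$, that is, when $\beta < (4-k)/3$. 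Paley--Zygmund together with Frostman's lemma then give $\dim \sE \ge (4-k)/3$ with positive probability, and combining with Proposition \ref{P:alk} yields the theorem.

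The hard step is the two-time density estimate $\int p_{M_s, M_t}^2 \lesssim (t - s)^{-1/3}$, equivalent to a pointwise bound $p_{M_t \mid M_s = a}(b) \lesssim (t - s)^{-1/3}$ (up to a modest factor in $a$) on the transition kernel of the narrow-wedge maximum. The only available tool is the Fredholm determinant formula for the KPZ fixed point from \cite{matetski2016kpz}, which is the ingredient the excerpt defers to Section \ref{A:fredholm-properties}. As flagged in Subsection \ref{S:proof-technique}, the naive Fredholm bound grows double-exponentially in $a$ and would ruin the energy estimate. I would counter this by thinning the support of $\mu_\epsilon$ to the set of $t \in I$ on which $M_t$ lies inside a slowly growing $\log^{2/3}$-window prescribed by a law of the iterated logarithm for the single-wedge maximum process; the $\log^{2/3}$ rate of this LIL is precisely what is needed to absorb the double-exponential factor in the Fredholm bound, while the thinning still preserves a positive fraction of the first-moment events. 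After thinning, the Frostman argument closes as above, and the remaining technical task is the Fredholm computation isolated in Section \ref{A:fredholm-properties}.
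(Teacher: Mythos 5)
Your proposal follows the same architecture as the paper: superimposed narrow wedges, the coupling of Proposition \ref{P:landscape-mixing} to decouple them into independent single-wedge maximum processes, an ergodicity-based reduction to one interval, a Frostman energy argument with first/second moment bounds, a Fredholm-determinant density estimate, and a thinning step to tame the double exponential. All of that is correct and matches Lemma \ref{L:h2-behaviour}, Corollary \ref{C:hausdorff-transfer}, Lemmas \ref{L:Akep-lemma}--\ref{L:second-moment}, Proposition \ref{P:HT-prop}, and Proposition \ref{P:kpz-fp-prop}.

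However, the thinning step as you have described it would not close, for two concrete reasons. First, you state that the LIL rate for the single-wedge maximum is of order $t^{1/3}\log^{2/3}(t^{-1})$ and that ``this $\log^{2/3}$ rate is precisely what is needed to absorb the double-exponential factor.'' That exponent is the H\"older modulus from Lemma \ref{L:Holder-1/3}, not the LIL rate; the LIL rate, proved in Lemma \ref{L:iterated-log}, is $t^{1/3}(\log\log(2+t^{-1}))^{1/3}$. The distinction is fatal to the numerology: after KPZ rescaling by $s^{1/3}$, the restriction $M \ge -\alpha(s\log\log(2+s^{-1}))^{1/3}$ feeds $a^-\le\alpha(\log\log(2+s^{-1}))^{1/3}$ into Proposition \ref{P:kpz-fp-prop}, so $(a^-)^{3/2}\le\alpha^{3/2}(\log\log(2+s^{-1}))^{1/2}$ and the double exponential collapses to a factor $\exp(c\log^{1/2}(2+s^{-1}))$, which is integrable against $s^{-\gamma-(k-1)/3}$. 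If instead the window were $\log^{2/3}(s^{-1})$, one gets $(a^-)^{3/2}\le\alpha^{3/2}\log(s^{-1})$, and the bound becomes $\exp(s^{-c\alpha^{3/2}})$, which destroys the energy integral entirely. So the quantitative claim as written is false, and the argument does not go through with that rate.

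Second, you omit the spatial localization thinning. Proposition \ref{P:kpz-fp-prop} bounds the density only of $\max_{x\in[-1,1]}\fh_1(x;h_0)$, not of the global maximum. To use it in the second-moment estimate, the paper must first union-bound over spatial grid cells of size $s^{2/3}$ and therefore needs to know that the argmax of $\fh^i_{t+s}$ lies within a window of polylogarithmic size around the argmax at time $t$. This is the content of Lemma \ref{L:noniterated-log} and the thinning condition $L_2(\fh,t)<\alpha$ in the definition of $B_\alpha$. Your proposal flags the $[-1,1]$ restriction as an obstacle but does not build in a mechanism to overcome it, and the resulting conditional density estimate $p_{M_t\mid M_s=a}(b)\lesssim(t-s)^{-1/3}$ for the \emph{global} narrow-wedge maximum is not something Proposition \ref{P:kpz-fp-prop} yields without that localization. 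Both thinning conditions are needed; your proposal has neither in its correct form.
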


Unsurprisingly, the lower bound is more difficult. To achieve it, we work with superimposed collections of narrow wedges. 
Let $\de_x$ be the initial condition with $\de_x(x) = 0$ and $\de_x(y) = -\infty$ for all $y \ne x$, and define
\begin{equation}
h_2 = \de_1 \vee \de_2, \qquad h_3 = \de_1 \vee \de_2 \vee \de_3.
\end{equation}
In the remainder of this section we write $k$ for either $2$ or $3$. For small values of $t$, the process $\mathfrak{h}_t(\cdot; h_k)$ should have $k$ competing maxima, given by the maxima of $k$ independent KPZ fixed points started from $k$ independent narrow wedges. The next lemma makes this intuition precise.

\begin{lemma}
\label{L:h2-behaviour}
We can couple KPZ fixed points $\mathfrak{h}_t(\cdot; h_k)$ and $\fh^1_t(\cdot; \de_1), \dots, \fh^k_t(\cdot; \de_k)$ so that $\fh^1_t(\cdot; \de_1), \dots, \fh^k_t(\cdot; \de_k)$ are all independent and there exists a random $T > 0$ such that for $t < T$ we have
\begin{equation}
\label{E:k-competing}
\max_{x \in \R} \mathfrak{h}_t(x; h_k) = \max_{i=1, \dots, k} \max_{x \in \R} \fh_t^i(x, \de_i).
\end{equation}
\end{lemma}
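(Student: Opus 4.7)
The plan is to build the coupling using Proposition~\ref{P:landscape-mixing} and then use Proposition~\ref{P:corollary107} to localize the maximizers of $\sL(i,0;\cdot,t)$ near $i$ for small $t$.

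First, I would apply Proposition~\ref{P:landscape-mixing} with the value $k\in\{2,3\}$ to produce directed landscapes $\sL_0,\sL_1,\dots,\sL_k$ on a common probability space such that $\sL_1,\dots,\sL_k$ are independent and, almost surely, there is a random $\ep>0$ with $\sL_0|_{K_{i,\ep}}=\sL_i|_{K_{i,\ep}}$ for each $i$. Let $\fh_t(\cdot;h_k)$ be driven by $\sL_0$ via \eqref{E:hopf-lax} and let $\fh^i_t(\cdot;\de_i)$ be driven by $\sL_i$; then $\fh^1,\dots,\fh^k$ are automatically independent. Since $h_k$ equals $0$ at $i\in\{1,\dots,k\}$ and $-\infty$ elsewhere, \eqref{E:hopf-lax} gives
$$
\fh_t(y;h_k)=\max_{i=1,\dots,k}\sL_0(i,0;y,t),\qquad \fh^i_t(y;\de_i)=\sL_i(i,0;y,t),
$$
so by swapping maxima, $\max_{y\in\R}\fh_t(y;h_k)=\max_i\max_y\sL_0(i,0;y,t)$.

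The key step is to produce a random $T_0>0$ such that for every $t\in(0,T_0)$, every $i\in\{1,\dots,k\}$, and every $j\in\{0,i\}$, the function $\sL_j(i,0;\cdot,t)$ attains its global maximum over $\R$ only at points inside $(i-1/4,i+1/4)$. This is where Proposition~\ref{P:corollary107} enters: for each of the finitely many landscapes $\sL_j$ it yields a random constant $C_j>0$ with
$$
\sL_j(i,0;y,t)+(y-i)^2/t\le C_j\,t^{1/3}\log^{4/3}\!\bigl(2(\|u\|_2+2)^{3/2}/t\bigr)\log^{2/3}(\|u\|_2+2)
$$
for $u=(i,0;y,t)$. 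Evaluating at $y=i$ gives $\sL_j(i,0;i,t)\ge -C_j\,t^{1/3}\operatorname{polylog}(1/t)$, while for $|y-i|\ge 1/4$ with $|y|\le M$ the right-hand side is dominated by the $-(y-i)^2/t\le -1/(16t)$ term, and for $|y|\ge M$ the parabolic penalty $-y^2/(2t)$ easily beats the polylogarithmic error. Choosing $M$ (random) large and then $t$ small enough, depending only on $C_0,\dots,C_k$, yields the desired localization.

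Finally, set $T:=\min(T_0,\ep)$. For $t<T$, both $\sL_0(i,0;\cdot,t)$ and $\sL_i(i,0;\cdot,t)$ attain their maxima in $[i-1/4,i+1/4]$, on which the two functions coincide by the coupling from Proposition~\ref{P:landscape-mixing}; hence
$$
\max_{y\in\R}\sL_0(i,0;y,t)=\max_{y\in[i-1/4,i+1/4]}\sL_i(i,0;y,t)=\max_{y\in\R}\sL_i(i,0;y,t)
$$
for each $i$, and taking the max over $i$ gives \eqref{E:k-competing}. The only nontrivial step is the localization of the maximizers, but it reduces to a routine parabolic-versus-polylogarithmic comparison using Proposition~\ref{P:corollary107} applied to each of the finitely many landscapes $\sL_0,\sL_1,\dots,\sL_k$.
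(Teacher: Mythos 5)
Your proposal is correct and follows essentially the same route as the paper's proof: couple via Proposition~\ref{P:landscape-mixing} so that the landscapes (equivalently, the narrow-wedge evolutions) agree on small spacetime boxes around each wedge, then use the parabolic decay in Proposition~\ref{P:corollary107} to show that for small $t$ all maximizers are trapped in $[i-1/4,i+1/4]$. The only cosmetic difference is that you phrase the argument directly in terms of $\sL_j(i,0;\cdot,t)$ rather than the notation $\fh_t(\cdot;\de_i)$, but the content is identical.
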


\begin{proof}
First, we can couple $\fh_t, \fh^1_t, \dots, \fh^k_t$ so that $\fh^1_t, \dots, \fh^k_t$ are independent and there exists $T' > 0$ such that for $t < T', i = 1, \dots, k$, and $x \in [i-1/4, i + 1/4]$ we have $\fh_t(x;  \de_i) = \fh^i_t(x;  \de_i)$. This follows from the more general coupling statement in Proposition \ref{P:landscape-mixing}. The equation \eqref{E:k-competing} then follows from the decay bounds in Proposition \ref{P:corollary107}, which guarantee that for all small enough $t$ we have 
\[
\max_{x \in \R} \fh_t(x;  h_k) = 	\max_{x \in \bigcup_{i=1}^k [i - 1/4, i + 1/4]} \fh_t(x;  \de_i), \qquad \max_{x \in \R} \fh^i_t(x;  \de_i) = \max_{x \in [i - 1/4, i + 1/4]} \fh^i_t(x;  \de_i). \qedhere \]
\end{proof}

\begin{corollary}
	\label{C:hausdorff-transfer}
In the setup above, define
$$
A_k = \{t \in (0, \infty) : \max \fh_t^1 = \dots = \max \fh_t^k \},
$$
and let $\ga_k$ be the supremum of the support of the random variable $\dim (A_k \cap [1/2, 1])$. Then
$
\al_k \ge \ga_k.
$
\end{corollary}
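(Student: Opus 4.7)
The plan is to reduce $\al_k \ge \ga_k$ to a positive-probability event about $\sT_{\ge k}(h_k)$, and then invoke Proposition \ref{P:alk}. Note $h_k$ is always decaying since it equals $-\infty$ outside $\{1,\dots,k\}$ (Lemma \ref{L:whats-in-the-set}(i) applies trivially), so Proposition \ref{P:alk} gives $\dim(\sT_{\ge k}(h_k) \cap I) = \al_k$ a.s.\ for every interval $I \sset (0,\infty)$. In particular, for each fixed $\eps > 0$ it suffices to show
\begin{equation*}
\p\big(\dim(\sT_{\ge k}(h_k) \cap I) > \ga_k - \eps\big) > 0
\end{equation*}
for some interval $I$: since $\al_k$ is deterministic, this forces $\al_k > \ga_k - \eps$, and letting $\eps \to 0$ gives $\al_k \ge \ga_k$.

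The key observation is that $A_k$ is scale invariant in distribution. Writing $F^i(t) = \max_x \fh^i_t(x; \de_i) = \max_x \sL^i(i, 0; x, t)$, spatial stationarity of each independent landscape $\sL^i$ (Proposition \ref{P:land-props}.3) gives that $(F^i(t))_{t > 0}$ is distributed as $(\max_x \sL^i(0, 0; x, t))_{t > 0}$ regardless of $i$, so the $F^i$ are i.i.d.\ copies of a single process $F$. KPZ scale invariance of each $\sL^i$ (Proposition \ref{P:land-props}.1) then yields $(F(t))_t \eqd (q F(q^{-3} t))_t$ for any $q > 0$. Applying the same scaling simultaneously to all $F^i$, the set $A_k = \{t : F^1(t) = \dots = F^k(t)\}$ satisfies $A_k \eqd q^3 A_k$ as a random subset of $(0, \infty)$. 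Hence, for every $\de > 0$,
\begin{equation*}
\dim(A_k \cap [\de/2, \de]) \eqd \dim(A_k \cap [1/2, 1]),
\end{equation*}
and both random variables have supremum-of-support equal to $\ga_k$.

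To finish, fix $\eps > 0$ and let $p = \p(\dim(A_k \cap [1/2, 1]) > \ga_k - \eps) > 0$. Since the random time $T$ from Lemma \ref{L:h2-behaviour} is a.s.\ positive, we may choose $\de > 0$ small enough that $\p(T > \de) > 1 - p/2$. By the scale invariance above together with a union bound,
\begin{equation*}
\p\big(\{T > \de\} \cap \{\dim(A_k \cap [\de/2, \de]) > \ga_k - \eps\}\big) \ge p/2 > 0.
\end{equation*}
On this event, Lemma \ref{L:h2-behaviour} gives $A_k \cap [\de/2, \de] \sset A_k \cap (0,T) \sset \sT_{\ge k}(h_k)$: for $t$ in the intersection the max of $\fh_t(\cdot; h_k)$ equals the common value of the $k$ individual maxes and is attained at each of the $k$ argmaxes of $\fh^i_t(\cdot; \de_i)$, which lie in the disjoint intervals $[i-1/4, i+1/4]$. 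Therefore $\dim(\sT_{\ge k}(h_k) \cap [\de/2, \de]) > \ga_k - \eps$ on a positive-probability event, completing the argument via the first paragraph.

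The only step requiring real care is the scale invariance $A_k \eqd q^3 A_k$: the initial conditions $\de_i$ are not scale invariant, but the \emph{max process} from a single narrow wedge is, because spatial stationarity of $\sL$ erases the dependence on the wedge's position and KPZ scaling then supplies Hurst-type self-similarity of index $1/3$ for $F$.
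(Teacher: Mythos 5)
Your proof is correct and follows the same overall structure as the paper's: exploit the scale invariance of $A_k$ coming from the landscape symmetries, use Lemma \ref{L:h2-behaviour} to inject $A_k \cap [0, T)$ into $\sT_{\ge k}(h_k)$, and then conclude via Proposition \ref{P:alk}. The one genuine difference is in how the scale-invariance information is exploited. The paper first upgrades it to an almost-sure statement: it observes that $\dim(A_k \cap [2^q, 2^{q+1}])$, $q \in \Z$, is a stationary ergodic sequence (ergodicity coming from Proposition \ref{P:tail-trivial}), so by the ergodic theorem $\dim(A_k \cap [0, 2^q]) = \ga_k$ a.s.\ for every $q$, and then combines this with $\p(T > 2^q) > 0$. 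You instead observe that Proposition \ref{P:alk} makes $\al_k$ deterministic, so only a positive-probability lower bound on $\dim(\sT_{\ge k}(h_k) \cap I)$ is needed; you get this directly from the definition of $\ga_k$ as a support supremum, the scale-invariance in distribution $A_k \eqd q^3 A_k$, and a union bound with $\{T > \de\}$, paying only an $\eps \to 0$ limit at the end. Your route is marginally more elementary in that it avoids invoking the ergodic theorem and the mixing input from Proposition \ref{P:tail-trivial} inside this corollary; the trade-off is the extra $\eps$-approximation, which the paper sidesteps because ergodicity gives the exact a.s.\ equality. Both are sound, and both lean on exactly the same three ingredients (Lemma \ref{L:h2-behaviour}, scale invariance of $F^i$, Proposition \ref{P:alk}).

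One small point worth making explicit: the scale invariance should be applied to the joint law of $(F^1, \dots, F^k)$, not to each $F^i$ separately, so that the set $A_k = \{t : F^1(t) = \cdots = F^k(t)\}$ genuinely satisfies $A_k \eqd q^3 A_k$; you do gesture at this with ``applying the same scaling simultaneously,'' and it is justified because the rescalings are induced by a single deterministic map on the underlying i.i.d.\ landscapes $\sL^1, \dots, \sL^k$.
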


\begin{proof}
This is similar to the proof of Proposition \ref{P:alk}. By scale and shift invariance of $\sL$ we have that 
$$
\max_{x \in \R} q^{-1}\fh^i_{q^3t}(q^2x; \de_i) \eqd \max_{x \in \R} \fh_t(x; \de_i)
$$
for any $q > 0$ and $i \in \{1, \dots, k\}$, where the equality in distribution is as functions of $t$. Therefore $A_k \eqd q A_k$ for all $q > 0$. Since Hausdorff dimension is unchanged under linear dilations, this implies that the sequence $\dim (A_k \cap [2^q, 2^{q + 1}]), q \in \Z$ is stationary, and ergodic by Proposition \ref{P:tail-trivial}. Therefore for every $q \in \Z$, by the ergodic theorem we have that
$$
\dim (A_k \cap [0, 2^q]) = \sup_{n \in \Z \cap (-\infty, q-1]} \dim (A_k \cap [2^n, 2^{n + 1}]) = \ga_k
$$
almost surely. Now, in the coupling in Lemma \ref{L:h2-behaviour}, we have
$
A_k \cap [0, a] \sset \sT_{\ge k}(h_k) \cap [0, a],
$
for any $a< T$, so $\ga_k \le \al_k$. 
\end{proof}

Our remaining task is to estimate $\ga_k$. For this, we just need to understand the behaviour of the $k$ i.i.d.\ functions $F_i(t) = \max \fh_t^i$. Heuristically, each $F_i$ should be a H\"older-$1/3^-$ process with increments that are only weakly dependent, since we obtain $F_i(t + \ep)$ from $F_i(t)$ by metric composition of $\fh_t^i$ with an independent landscape increment. With this heuristic it is not difficult to estimate that $\ga_2 =  2/3$ and $\ga_3 = 1/3$. To make this heuristic precise the main technical tool we need is an estimate on the density of $F_i(t+ \ep)$ given $\fh_i(t)$. Equivalently, this amounts to estimating the density of $\max \fh_1$ from a fairly general initial condition. 

\begin{prop}
	\label{P:kpz-fp-prop}
	Let $h_0$ be any initial condition bounded above by $0$, and let $M = M(h_0) = \max_{x \in [-1, 1]} \fh_1(x; h_0)$. Then the random variable $M$ has CDF $F_{M}$ satisfying
	$$
	F_{M}(b) - F_M(a) \le c(b-a)\exp(e^{c (a^-)^{3/2}})
	$$
	for all $a < b$. Here $c > 0$ is a universal constant.
\end{prop}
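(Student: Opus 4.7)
The plan is to obtain the density bound by differentiating an explicit Fredholm determinant formula for the CDF of $M$. Set $G(c) := \P(M \le c) = \P(\fh_1(x; h_0) \le c\text{ for all } x \in [-1, 1])$, so that $F_M(b) - F_M(a) = \int_a^b G'(c)\,dc$. On the interval of integration $c \in [a,b]$ we have $c^- \le a^-$, so it suffices to prove a pointwise bound of the form $|G'(c)| \le c_0 \exp(e^{c_0(c^-)^{3/2}})$ uniformly in $c \in \R$ and in all initial conditions $h_0 \le 0$; the stated inequality then follows by integration in $c$.

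The starting point is the Matetski--Quastel--Remenik representation, adapted to the continuous event $\{\fh_1 \le c \text{ on } [-1,1]\}$ via a Brownian hit kernel. This gives $G(c) = \det(I - \sK^{h_0}_c)$ for a trace-class operator whose kernel factors into Airy-type propagators, a flat-level hit operator on $[-1,1]$ encoding $c$, and an exponentiated hypograph factor that isolates the dependence on $h_0$. The assumption $h_0 \le 0$ enters as a pointwise inequality which dominates the kernel (and its $c$-derivative) by an $h_0$-free version, reducing all subsequent estimates to a deterministic analysis of Airy-type integrals.

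From the identity $G'(c) = -G(c) \cdot \mathrm{tr}\bigl((I - \sK^{h_0}_c)^{-1} \partial_c \sK^{h_0}_c\bigr)$, standard Fredholm-minor expansions combined with Hadamard's inequality yield
\[
|G'(c)| \le c_0 \bigl\|\partial_c \sK^{h_0}_c\bigr\|_1 \cdot \exp\!\bigl(c_0 \|\sK^{h_0}_c\|_1\bigr).
\]
Classical steepest-descent bounds on Airy functions and their convolutions with the heat kernel give $\|\partial_c \sK^{h_0}_c\|_1$ polynomial in $c^-$, while $\|\sK^{h_0}_c\|_1$ itself admits a single-exponential bound of the form $c_0 \exp(c_0(c^-)^{3/2})$. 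It is precisely this exponential growth in the trace norm, fed into the outer exponential in the Hadamard bound, that forces the double exponential appearing in the proposition, up to harmless adjustment of constants.

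The main obstacle is the book-keeping: one must set up the MQR formula in exactly the right form for a continuous-time interval hit, move the hypograph factor to the side where $h_0 \le 0$ yields a pointwise dominance of integrands, and justify differentiation under the determinant in $c$ uniformly over $h_0$. The underlying analytic estimates are standard steepest-descent tail bounds on Airy kernels, but the operator-theoretic bookkeeping is technical enough that the author defers it to the dedicated Section \ref{A:fredholm-properties}.
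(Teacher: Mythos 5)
Your proposal is essentially the approach the paper takes: both rely on the MQR Fredholm determinant formula $F_M(a) = \det\bigl(I - \mathbf{K}^{\operatorname{hypo}(h_0)}_{1/2}\mathbf{K}^{\operatorname{epi}(g_a)}_{-1/2}\bigr)$ (which factors the $h_0$-dependence cleanly away from the $a$-dependence, so no pointwise dominance of integrands is needed), the Lipschitz-in-trace-norm bound $|\det(I+A)-\det(I+B)| \le \|A-B\|_1 e^{1+\|A\|_1+\|B\|_1}$ of which your Hadamard-type derivative estimate is the differential version, the uniform bound $\|\mathbf{K}^{\operatorname{hypo}(h_0)}_{1/2}\|_1 \le c$ for all $h_0 \le 0$ from \cite[Appendix A.1]{matetski2016kpz}, and Airy tail estimates to control the $a$-dependent epi factor. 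One small correction: the trace norm of the $a$-derivative of the epi operator grows like $e^{c(a^-)^{3/2}}$ (this is the content of Lemma~\ref{L:crux-lemma}), not polynomially as you claim; but since the double exponential already arises from feeding the single-exponential trace norm of the epi operator into the outer exponential, exactly as you describe, this discrepancy does not affect the conclusion.
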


The proof of Proposition \ref{P:kpz-fp-prop} follows from an estimate on the Fredholm determinant formula for $F_M$. As our proof is both a digression from our main themes and not novel or difficult, we leave it until Section \ref{A:fredholm-properties}.

We would like to use Proposition \ref{P:kpz-fp-prop} along with KPZ scale invariance to estimate conditional probabilities of the form
\begin{equation}
\label{E:hit-try}
\p\Big(|\max \fh^i_{t+s} - \max \fh^j_{t+s}| \le \ep \text{ for all } i \ne j \;\Big| \; |\max \fh^i_{t} - \max \fh^j_{t}| \le \ep \text{ for all } i \ne j \Big)
\end{equation}
over a wide range of $s, \ep$. However, there are two obstacles to doing this. The first is that Proposition \ref{P:kpz-fp-prop} only gives an estimate on the maximum over $[-1, 1]$ rather than over all of $\R$. The second, more serious, obstacle is that the bound on the Lebesgue density $F_M'(a)$ grows doubly exponentially as $a \to -\infty$\footnote{The double exponential bound here seems like a crude overestimate, but I had no luck finding a method to improve upon this. Note that it is possible (but rather technical) to show that $\sup_{h_0 : h_0 \le 0} F_M'(a)$ grows at least as a small power of $\log |a|$ as $a \to -\infty$. Similarly, for fixed $a$, $\sup_{h_0 : h_0 \le 0} F_{M_b}'(a)$ grows at least as a small power of $\log b$ as $b \to \infty$ if $M_b = \max \{\fh_1(x) : x \in [-b, b]\}$.}.

The workaround for dealing with these obstacles is to not 
to estimate \eqref{E:hit-try} directly, but rather first to thin out the set $A_k$ and then estimate a version of \eqref{E:hit-try} that corresponds to this thinning and is more amenable to a direct application of Proposition \ref{P:kpz-fp-prop}. The crucial miracle that allows this thinning to work is a law of the iterated logarithm for the KPZ fixed point that comes in to cancel out the double exponential in Proposition \ref{P:kpz-fp-prop}.
The next few lemmas set up this thinning.

\begin{lemma}
	\label{L:iterated-log}
	Let $\mathfrak{h}_t(x) = \sL(0,0; x, t)$ be the KPZ fixed point started from the narrow wedge initial condition, and let $M_t = \max \fh_t$. Define $f:[0, 1] \to \R$ by $f(t)= M_1 - M_{1 - t}$. Then almost surely,
	\begin{equation}
	\label{E:loglog}
	\inf_{t \in (0, 1/2]} \frac{f(t)}{(t\log \log (2 + t^{-1}))^{1/3}} > - \infty.
	\end{equation}
\end{lemma}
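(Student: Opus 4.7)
The plan is to bound $f(t)$ below at a dense grid of times using a cubic left-tail estimate on a Tracy--Widom random variable, and then pass to all $t$ via the modulus of continuity of $\sL$.

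\textbf{Pointwise lower bound.} Let $y_{1-t}^*$ be the (a.s.\ unique, by Lemma \ref{L:unique-max} applied to the narrow wedge) argmax of $\fh_{1-t}$. Metric composition on $[1-t,1]$ yields $\fh_1(x) \ge M_{1-t} + \sL(y_{1-t}^*, 1-t; x, 1)$ for every $x \in \R$, so
\begin{equation*}
f(t) = M_1 - M_{1-t} \ge \max_{x \in \R} \sL(y_{1-t}^*, 1-t; x, 1).
\end{equation*}
Conditioned on $\sF_{1-t} := \sig\{\sL(x,s;y,r) : r \le 1-t\}$, the right-hand side is distributed as $t^{1/3} \tilde W$ by spatial translation and KPZ scale invariance of $\sL$ (Proposition \ref{P:land-props}), where $\tilde W \eqd \max_x \sL(0,0;x,1)$ is independent of $\sF_{1-t}$. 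Since $\tilde W \ge \sL(0,0;0,1)$ and the latter is GUE Tracy--Widom, the classical cubic left-tail asymptotic gives
\begin{equation*}
\P\big(f(t) < -a\, t^{1/3}\big) \le \P(\tilde W < -a) \le e^{-c a^3}, \qquad a \ge 1,
\end{equation*}
for a universal $c > 0$.

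\textbf{Grid bound and Borel--Cantelli.} For $n \ge 1$, set $t_n = 2^{-n}$, $I_n = [t_{n+1}, t_n]$, and let $S_n \sset I_n$ be a uniformly spaced grid of $N_n = n^{11}$ points. Applying the pointwise bound with $a = (C \log n)^{1/3}$ at each $s \in S_n$ and taking a union bound,
\begin{equation*}
\P\big(\exists s \in S_n : f(s) < -(C t_n \log n)^{1/3}\big) \le n^{11 - cC},
\end{equation*}
which is summable in $n$ once $C > 12/c$. Borel--Cantelli then produces a random $N$ such that $f(s) \ge -C' (s \log\log(2+s^{-1}))^{1/3}$ for all $n \ge N$ and all $s \in S_n$.

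\textbf{Interpolation.} A.s.\ there is a random $A \in (0, \infty)$ with $\argmax \fh_s \sset [-A, A]$ for all $s \in [1/2, 1]$; this follows from the parabolic decay in Propositions \ref{P:corollary107}, \ref{P:corollary107-airysheet}. Applying Proposition \ref{P:modulus-bound} to the stationary version $\sR$ on a compact set $K$ containing $\{0\} \X \{0\} \X [-A, A] \X [1/2, 1]$ (and converting back to $\sL$ using $\sL(0,0;x,\tau) = \sR(0,0;x,\tau) - x^2/\tau$) gives, for $s, t \in I_n$ with $|s-t| \le t_n/N_n$,
\begin{equation*}
|M_{1-s} - M_{1-t}| \le \sup_{|x| \le A} |\sL(0,0;x,1-s) - \sL(0,0;x,1-t)| \le C_K (t_n/N_n)^{1/3} \log^{2/3}(N_n/t_n + 1) + O(A^2 t_n/N_n),
\end{equation*}
which is $O(t_n^{1/3} n^{-3})$, hence $o((t_n \log n)^{1/3})$. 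Together with the grid bound, this yields $f(t) \ge -C''(t \log\log(2+t^{-1}))^{1/3}$ for all $t \in (0, t_N]$; the remaining range $[t_N, 1/2]$ is controlled by continuity of $f$ and compactness.

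The main obstacle is the interpolation step: it requires both an a priori almost sure bound on the location of $\argmax \fh_s$ uniformly in $s \in [1/2, 1]$ and a careful check that the polynomial grid refinement $N_n = n^{11}$ is fine enough to make the modulus error negligible relative to $(t_n \log n)^{1/3}$ at every scale.
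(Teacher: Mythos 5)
Your proof is correct and follows essentially the same three-step structure as the paper: (a) a pointwise left-tail bound of order $e^{-ca^3}$ via metric composition, stationarity and scaling; (b) a dyadic grid plus Borel--Cantelli; (c) interpolation between grid points via the modulus of continuity of $\sL$. The two implementations differ in minor ways. The paper first truncates to $f^n(t)=M^n_1-M^n_{1-t}$ with $M^n$ a maximum over $[-n,n]$, carries that truncation through the whole argument, and folds the modulus-of-continuity control (Lemma~\ref{L:landscape-bdI}) directly into the probability estimate inside the Borel--Cantelli step. You instead work with $f$ itself, localize the argmax only where the interpolation needs it, take a finer grid ($n^{11}$ points per dyadic interval rather than $k^3$), and then invoke the almost-sure modulus bound from Proposition~\ref{P:modulus-bound} with a single random constant $C_K$; this decouples the continuity estimate from the union bound and is arguably a cleaner way to organize step (c), at the cost of requiring the uniform-in-$s$ argmax localization (which you correctly derive from the parabolic decay estimates). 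Both routes lead to the same conclusion.
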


We prepare for the proof of Lemma \ref{L:iterated-log} with a simple bound on $\sL$.

\begin{lemma}
	\label{L:landscape-bdI}
	For an interval $I = [a, b], t \in (0, 1/2),$ and $s \in (1/2, 1)$, define
	$$
	Y_{I, s, t} := \max_{x \in I, r \in [0, t]} |\sL(0,0; x, s + r) - \sL(0,0; x, s)|.
	$$
	Then $\P(Y_{[a,b], s, t} > m t^{1/3} \log^{2/3}(t^{-1})) \le c(b-a + 1) e^{-d m^{3/2}}$ for universal $c, d > 0$.
\end{lemma}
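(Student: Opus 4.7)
The plan is to reduce the increment of $\sL$ to an increment of the stationary version $\sR$ plus a small parabolic correction, then apply Proposition \ref{P:modulus-bound} on compact sets that tile the spatial interval $[a,b]$, and combine spatial stationarity of $\sR$ with a union bound and the Markov/Chernoff consequence of $\E a^{C_K^{3/2}}<\infty$.

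First I would write
$$
\sL(0,0;x,s+r) - \sL(0,0;x,s) = \bigl[\sR(0,0;x,s+r) - \sR(0,0;x,s)\bigr] + \frac{x^2 r}{s(s+r)}.
$$
For $s\in(1/2,1)$, $r\in[0,t]$, and $t\in(0,1/2)$, the parabolic correction is bounded by $4x^2 t$, which (for $x$ in a bounded interval) is of lower order than $t^{1/3}\log^{2/3}(t^{-1})$ and so can be absorbed into the right-hand side at the cost of adjusting the universal constants. So it suffices to bound
$$
\tilde Y_{[a,b],s,t} := \max_{x\in[a,b],\,r\in[0,t]} \bigl|\sR(0,0;x,s+r)-\sR(0,0;x,s)\bigr|.
$$

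Next I would cover $[a,b]$ by at most $N := \lceil b-a\rceil + 1$ unit intervals $I_j = [j,j+1]$, and for each $j$ introduce the compact set $K_j := \{(0,0;x,t') : x\in I_j,\; t'\in[1/2,\,3/2]\} \subset \Rd$. Applying Proposition \ref{P:modulus-bound} to $K_j$ and the points $u=(0,0;x,s+r)$, $v=(0,0;x,s)$ (for which $\xi=0$ and $\tau = r\le t$), I get
$$
\max_{x\in I_j,\,r\in[0,t]} \bigl|\sR(0,0;x,s+r)-\sR(0,0;x,s)\bigr| \le C_{K_j}\, t^{1/3}\log^{2/3}(t^{-1}+1).
$$
Choosing each $C_{K_j}$ as the minimal (hence measurable) constant for which the Proposition \ref{P:modulus-bound} bound holds on $K_j$, the spatial stationarity of $\sL$ (and hence of $\sR$) in Proposition \ref{P:land-props}.3 gives $C_{K_j} \eqd C_{K_0}$ for every integer $j$, since $K_j$ is a spatial translate of $K_0$. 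A Markov inequality applied to $\E a^{C_{K_0}^{3/2}}<\infty$ then yields $\P(C_{K_0}>m) \le c\,e^{-d m^{3/2}}$ for some universal $c,d>0$ (depending only on the fixed compact $K_0$). A union bound over the $N$ tiles, together with the absorbed parabolic correction, yields the claim.

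The main technical nuisance I anticipate is the bookkeeping for the parabolic correction: when $[a,b]$ is large or far from the origin, $4x^2 t$ is not dominated by $m t^{1/3}\log^{2/3}(t^{-1})$ uniformly in $m$, so the ``universal constants'' wording has to be read as uniform on any fixed bounded spatial range — which is all that is needed in the application to Lemma \ref{L:iterated-log}, where the relevant $x$ lies in a region determined by the a.s. boundedness of the argmax of $\fh_t$. Apart from this mild caveat, the argument is a direct stationarity-plus-union-bound on top of Proposition \ref{P:modulus-bound}, with no further probabilistic input required.
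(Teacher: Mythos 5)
Your strategy is structurally the same as the paper's: tile $[a,b]$ with unit intervals, apply Proposition \ref{P:modulus-bound}, and union bound. Spelling out the $\sR$-decomposition and the resulting parabolic correction $x^2 r/(s(s+r))$ is a correct way to make explicit what the paper leaves implicit when it applies Proposition \ref{P:modulus-bound} (which is stated for $\sR$, not $\sL$).

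The gap is the claim $C_{K_j} \eqd C_{K_0}$ ``by spatial stationarity.'' Each $K_j = \{(0,0;x,t'): x\in[j,j+1],\ t'\in[1/2,3/2]\}$ has its starting point pinned at the origin; only the endpoint's spatial coordinate is shifted. Spatial stationarity (Proposition \ref{P:land-props}.3) translates both the start and end coordinates by the same amount, so the spatial translate of $K_0$ by $j$ is $\{(j,0;x,t'): x\in[j,j+1],\ t'\in[1/2,3/2]\}$, not $K_j$. The symmetry that shifts only the endpoint is the skew (Proposition \ref{P:land-props}.4), but the skew moves the endpoint by an amount proportional to the time coordinate, so it does not carry $K_0$ onto $K_j$ either. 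Hence equidistribution of the $C_{K_j}$'s does not follow as you've argued, and the union-bound constant would a priori degrade with $j$. The paper's own proof sidesteps this by asserting directly that the law of $Y_{I,s,t}$ is unchanged under shifts of $I$ (the needed statement), then applies Proposition \ref{P:modulus-bound} once on $K=\{(0,0;x,1/2+r): x,r\in[0,1]\}$. As you yourself observe via the parabolic correction, such shift-invariance is really only clean on bounded spatial ranges — which is all that Lemma \ref{L:iterated-log} needs. The cleanest repair of your argument is therefore the one your own hedge suggests: in the application $[a,b]$ is a fixed bounded interval, so take a single fixed compact $K\supset\bigcup_j K_j$ with a single constant $C_K$; the factor $(b-a+1)$ is then absorbed into the constant and the tile-by-tile equidistribution claim is unnecessary.
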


\begin{proof}
The distribution of $Y_{I, 1, t}$ does not change if we shift $I$ and 
$$
Y_{[a, b], s, t} \le \max \{Y_{[i, i + 1], s, t} : i \in \{\fl{a}, \dots, \fl{b}\}\},
$$
so by a union bound it suffices to prove the lemma when $I = [0, 1]$. We can then apply Proposition \ref{P:modulus-bound} to the compact set $K = \{(0,0; x, 1/2 + r) \in \Rd : x, r \in [0, 1]\}$ to conclude the result.
\end{proof}

\begin{proof}[Proof of Lemma \ref{L:iterated-log}]
For all $n \in \N$, define $f^n(t) = M^n_1 - M^n_{1 - t}$ where $M^n_t = \max_{x \in [-n, n]} \fh_t(x)$
	First, by Lemma \ref{L:as-decay-wehave} all the $M_t, t \in [1/2, 1]$ are contained in a common compact set $[-N, N]$. Therefore it suffices to prove that for all $n \in \N$, \eqref{E:loglog} holds with $f^n$ in place of $f$.

	Let $S_t = \argmax_{x \in [-n, n]} \fh_t(x)$. We have $f^n(t) \ge \sL(S_{1-t}, 1-t; S_{1-t}, 1)$. Moreover, $S_{1-t}$ is independent of $\sL(\cdot, 1-t; \cdot, \cdot)$, and so
	\begin{equation}
	\label{E:f^n-bound}
\P(f^n(t) \le - m t^{1/3}) \le c e^{-d m^3}
	\end{equation}
	for absolute constants $c, d > 0$ since $\sL(S_{1-t}, 1-t; S_{1-t}, 1) \eqd t^{1/3} T$, where $T$ is a Tracy-Widom GUE random variable.
	 Next, with notation as in Lemma \ref{L:landscape-bdI}, for $0 < t < s \le 1/2$ we have
	\begin{align*}
	\min_{r \in [0, t]} f^n(s - r) = f^n(s) - \max_{r \in [0, t]} (M^n_{1 - s + r} - M^n_{1-s}) \ge f(s) - Y_{[-n, n], 1-s, t}.
	\end{align*}
	Therefore by \eqref{E:f^n-bound} and Lemma \ref{L:landscape-bdI}, for an absolute constant $c > 0$ we have
	$$
	\P\Big(\min_{r \in [0, 2^{-k}/k^2]} f(2^{-k}(1 + i/k^3) - r) < - c2^{-k/3}\log^{1/3}(k)\Big) < c k^{-5}
	$$
	for all $k \in \N, i \in \{1,\dots, k^3\}$. Applying the Borel-Cantelli lemma shows that the above event holds for only finitely many pairs $(k, i)$,  yielding the result.
	\end{proof}

\begin{lemma}
	\label{L:noniterated-log}
	Let $\mathfrak{h}_t$ be the KPZ fixed point started from an always decaying initial condition, and let $S_t =  \sup (\argmax \mathfrak{h}_t)$ and $I_t = \inf (\argmax \mathfrak{h}_t)$.
	Define $g:[0, 1] \to \R$ by $g(t) = |S_1 - S_{1 - t}| \vee |S_1 - I_{1-t}|$. Then almost surely, for every $\ep > 0$ we have
	$$
	\sup_{t \in (0, 1/2]} \frac{g(t)}{t^{2/3} \log^{16 + \ep}(t^{-1})} < \infty.
	$$
\end{lemma}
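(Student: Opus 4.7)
The plan is to prove the bound for the stationary (Bessel-$3$) initial condition $h_0 = -R$ and then transfer to general always decaying $h_0$ using Lemma \ref{L:Bessel-lemma} (applied at $t = s = 1/2$): the event in question is measurable with respect to the argmax-graph over the time window $(0,1/2]$ of a KPZ fixed point started from $T_{Y_{1/2}}\fh_{1/2}$, which is absolutely continuous with respect to $\sY^{1/2}(-R)$ by Lemma \ref{L:Bessel-lemma}.

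For $h_0 = -R$, write $\fh_1(y) = \max_w[\fh_{1-t}(w) + \sL(w, 1-t; y, 1)]$ and recenter spatially at $Y_{1-t} = \argmax\fh_{1-t}$. Combining Theorem \ref{T:Bessel-IC} (which gives $T_{Y_{1-t}}\fh_{1-t} \eqd -R$), independence of $Y_{1-t}$ from $\sL(\cdot, 1-t; \cdot, 1)$, spatial stationarity of $\sL$, KPZ scale invariance, and Brownian scale invariance of $R$, one obtains the exact distributional identity
$$
t^{-2/3}(S_1 - Y_{1-t}) \eqd S'' := \argmax\fh''_1(\cdot;-R'),
$$
where $\fh''_1$ is an independent KPZ fixed point at time $1$ from a fresh Bessel-$3$ process $-R'$. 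Since $\argmax\fh_{1-t}$ is a.s.\ unique (Lemma \ref{L:unique-max}), $g(t) = |S_1 - Y_{1-t}|$ a.s., so $g(t)/t^{2/3} \eqd |S''|$. Combining the parabolic upper bound of Proposition \ref{P:corollary107} with the standard Bessel-$3$ growth $R'(u) \ge c\sqrt{|u|}$ for large $|u|$ yields $\fh''_1(y) \le -c\sqrt{|y|} + O(\log^2|y|)$ for large $|y|$, while $\max\fh''_1 \ge \sL(0, 0; 0, 1)$ has a Tracy-Widom lower tail, giving $\P(|S''| > x) \le e^{-cx^\alpha}$ for some $\alpha > 0$.

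Apply this at dyadic times $t_k = 2^{-k}$ with threshold $x_k = t_k^{2/3}\log^{16+\epsilon}(t_k^{-1})$: the super-polynomial tail combined with Borel-Cantelli gives $g(t_k) \le x_k$ for all large $k$ a.s. To extend to $s \in [t_{k+1}, t_k]$, the sharp continuity of Proposition \ref{P:modulus-bound} yields $M_{1-t_k} - \fh_{1-t_k}(y) \le C|s-t_k|^{1/3}\log^{2/3}(|s-t_k|^{-1}+1)$ for any $y \in \argmax\fh_{1-s}$, and the local Bessel quadratic behavior of $\fh_{1-t_k}$ near $Y_{1-t_k}$ (from Theorem \ref{T:Bessel-IC} together with a standard estimate on $\sup\{u: R(u) \le \delta\}$ for a Bessel-$3$ process) yields $|y - Y_{1-t_k}| \le C|s - t_k|^{2/3}\log^{4/3}(|s-t_k|^{-1}+1)$. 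The triangle inequality then gives $g(s) \le Cs^{2/3}\log^{16+\epsilon}(s^{-1})$. The main difficulty is this fill-in, since $\argmax\fh_{1-s}$ is genuinely discontinuous in $s$ at non-uniqueness times (the very objects studied in the paper); using the precise H\"older-$1/3$-with-log-$2/3$ estimate of Proposition \ref{P:modulus-bound} rather than a generic H\"older-$1/3^-$ bound is essential, and the generous power $16$ in the log absorbs the $\log^{4/3}$ continuity cost and any inefficiencies in the tail bound on $|S''|$.
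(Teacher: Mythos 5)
Your overall strategy—reduce to the Bessel initial condition via Lemma \ref{L:Bessel-lemma}, exploit the stationarity from Theorem \ref{T:Bessel-IC} together with KPZ and Brownian scaling to reduce a fixed-$t$ estimate to a scale-one estimate, and then Borel--Cantelli over dyadic scales—is the same route the paper takes. However, there is a genuine and serious gap in the key tail estimate, and a secondary issue in the fill-in step.

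The crucial error is your claim that $\P(|S''| > x) \le e^{-cx^\alpha}$ for some $\alpha > 0$, based on the assertion that "$R'(u) \ge c\sqrt{|u|}$ for large $|u|$.'' This lower envelope does not hold: by the Dvoretzky--Erd\H{o}s test the Bessel-$3$ process satisfies $R(u) < \sqrt{u}/\log u$ infinitely often, so there is no uniform quantitative statement of the form $R(u) \ge c\sqrt{u}$ eventually. The best one can say with a usable tail is along the lines of the paper's Lemmas \ref{L:bessel-1} and \ref{L:X0-explicit-bound}: the random variable $X = \sup_{x} -R(x) + x^{1/4}$ has only a \emph{polynomial} tail, $\P(X \ge m) \le cm^{-1/4}$. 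Feeding this through the parabolic decay of Proposition \ref{P:corollary107}, the resulting tail on the recentered argmax is only polynomial, on the order of $m^{-1/16}$ (the $a^{-1/16}$ coming from $\P(X > a^{1/4}/2)$ dominates the super-polynomial contribution from the landscape constant $C$). Had your exponential tail been correct, you could replace $\log^{16+\ep}$ by any iterated logarithm, and the exponent $16$ would be inexplicable; the fact that the statement is phrased with $\log^{16+\ep}$ is precisely a reflection of the reciprocal of the available polynomial tail exponent $1/16$, and you should have taken this as a warning sign. As it stands, your proof overstates the tail and therefore does not actually establish the claim in the form you present it, even though the polynomial tail (once corrected) is still strong enough for Borel--Cantelli against thresholds $x_k \sim 2^{-2k/3} k^{16+\ep}$.

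The secondary issue is your fill-in from dyadic times $t_k$ to the intermediate times. You propose a two-step chaining argument (Hölder continuity of $M_t$ in $t$, then a local Bessel square-root estimate near the argmax), but both steps introduce random constants that you have not controlled uniformly in $k$, and whose tails are themselves only polynomial. Getting uniformity here would require precisely the kind of quantitative first-moment bookkeeping you are trying to avoid. The paper sidesteps this entirely by observing that, under the stationarity and scale invariance of $(\sL, R)$ (Proposition \ref{P:special-IC}), the \emph{supremum over the entire dyadic block} $\sup_{t \in [2^{-k}, 2^{-k+1}]} g(t)$ is equal in distribution to $2^{-2k/3}$ times a single scale-one random variable $|S_2| + \sup_{t\in[0,1]} |S_t| \vee |I_t|$, so the only estimate needed is a one-block tail bound plus a union bound in $k$, with no chaining at all. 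You should restructure the dyadic step this way and then correct the tail estimate to the polynomial one before the argument closes.
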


Note that by Lemma \ref{L:unique-max}, for any fixed $t$, almost surely $S_t = I_t$, so we could alternately the define $g(t)$ with $I_1$ instead of $S_1$.
\begin{proof}
First, by Lemma \ref{L:Bessel-lemma}, it suffices to prove the result when $\mathfrak{h}_t$ is started from the Bessel initial condition $-R$. Next, for the Bessel initial condition, by Theorem \ref{T:Bessel-IC} we have
$$
((S_{t + s} - S_t, I_{t + s} - S_t) : s \in (0, \infty)) \eqd ((S_s, I_s) : s \in (0, \infty))
$$
 for any $t \in [0, \infty)$. Moreover, we have the scale invariance
\begin{equation}
\label{E:Ss-bd}
((S_s, I_s) : s \in (0, \infty)) \eqd ( q^{-2} (S_{q^3 s}, I_{q^3 s}) : s \in (0, \infty))
\end{equation}
for any $q > 0$. Combining these facts gives that for any $k \in \N$ we have
\begin{align*}
\sup_{t \in [2^{-k}, 2^{-k + 1}]}& g(t) \\
&= \sup_{t \in [2^{-k}, 2^{-k + 1}]} |(S_1 - S_{1 - 2^{-k + 1}}) - (S_{1-t} - S_{1-2^{-k + 1}})| \vee |(S_1 - S_{1-2^{-k + 1}}) - (I_{1-t} - S_{1-2^{-k + 1}})| \\
&\eqd \sup_{t \in [2^{-k}, 2^{-k + 1}]} |S_{2^{-k + 1}} -S_{{2^{-k + 1}}-t}| \vee |S_{2^{-k + 1}}- I_{{2^{-k + 1}}-t}| \\
&\eqd 2^{-2k/3} \sup_{t \in [1, 2]} |S_{2} -S_{2-t}| \vee |S_2 -I_{2-t}| \\
&\le 2^{-2k/3} \Big(|S_2| + \sup_{t \in [0, 1]} |S_t| \vee |I_t|\Big).
\end{align*}
Therefore by a union bound over intervals of the form $[2^{-k}, 2^{-k + 1}]$, the result will follow if we can show that
\begin{equation}
\p(|S_2| + \sup_{t \in [0, 1]} |S_t| \vee |I_t| > m) \le cm^{-1/16}
\end{equation}
for a universal $c > 0$. By the invariance in \eqref{E:Ss-bd}, it suffices to prove a bound of the same form on $\sup_{t \in [0, 1]} |S_t| \vee |I_t|$. By Proposition \ref{P:corollary107} we have that
$$
|\sL(x, 0; y, s) + \frac{(x - y)^2}{s}| \le C \log^2(2 + |x| + |y|) 
$$
for all $s \in [0, 1]$ and a random $C$ satisfying $\p(C > m) \le ce^{-dm^{3/2}}$. Also, by a standard bound on Bessel processes (Lemma \ref{L:X0-explicit-bound}), the random variable
$$
X = \sup_{x \in \R} -R(x) + x^{1/4}
$$
satisfies $\p(X \ge m) \le cm^{-1/4}$. A computation then shows that for large enough $a$,
$$
\sup_{t \in [0, 1]} |S_t| \vee |I_t| \ge a \quad \implies \quad C > a^{1/8} \text{ or } X > a^{1/4}/2,
$$
from which the result follows.
\end{proof}

We use Lemmas \ref{L:iterated-log} and \ref{L:noniterated-log} to thin out the set $A_k$. Consider the KPZ fixed point $\fh$ started from a narrow wedge initial condition $\de_x$. Let $M_s, S_s, I_s$ be defined as in Lemmas \ref{L:iterated-log} and \ref{L:noniterated-log}, and for $t \in [1/2, 1]$ let
\begin{equation*}
L_1(\fh, t) := \inf_{s \in (0, t/2]} \frac{M_t - M_{t-s}}{(s\log \log (2 + s^{-1}))^{1/3}}, \qquad L_2(\fh,  t) = \sup_{s \in (0, t/2]} \frac{|S_t - S_{t-s}| \vee |S_t - I_{t-s}|}{s^{2/3} \log^{17}(s^{-1})}.
\end{equation*}
 For $\al > 0$, define
$$
B_\al(\fh) := \{t \in [1/2, 1] : L_1(\fh, t) > - \al, L_2(\fh, t) < \al\}.
$$
Rather than analyzing the set $A_k$ directly, we will analyze the sets $A_k \cap B_{\al, k}$ where 
$$
B_{\al, k} :=B_\al(\fh^1) \cap \dots \cap B_\al(\fh^k).
$$
Theorem \ref{T:lower-bd} will then follow immediately from combining Corollary \ref{C:hausdorff-transfer} with the following proposition.

\begin{prop}
	\label{P:HT-prop}
	For $k = 2, 3$, all large enough $\al > 0$ and any $\ga < (4-k)/3$, we have
$$
\p\lf(\dim(A_k \cap \close{B_{\al, k}}) \ge \ga \rg) > 0.
$$
Here $\close{B_{\al, k}}$ denotes the closure of $B_{\al, k}$.
\end{prop}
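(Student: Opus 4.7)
The strategy is the Frostman energy method applied to thinned approximating measures. Let $E_\ep(t) = \{|M^i_t - M^{i+1}_t| \le \ep : i = 1, \dots, k-1\}$ where $M^i_t = \max \fh^i_t$, and define random measures on $[1/2, 1]$ by
\[
	\mu_\ep(dt) := \ep^{-(k-1)} \indic(E_\ep(t)) \indic(t \in B_{\al, k}) \, dt.
\]
Continuity of each $M^i_\cdot$ forces any weak limit of $\mu_\ep$ to be supported on $A_k \cap \close{B_{\al, k}}$. I aim to produce such a weak limit with finite $\ga$-energy and positive mass with positive probability, at which point Frostman's lemma gives $\dim(A_k \cap \close{B_{\al, k}}) \ge \ga$ on the positive-probability event $\{\mu \ne 0\}$.

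The main step is the second-moment (energy) bound, and is where the thinning by $B_{\al, k}$ is indispensable. For $s < t \in [1/2, 1]$ with $r := t-s$, I condition on $(\fh^i_s)_{i \le k}$ and use that, by KPZ scale invariance, $\tilde M^i := r^{-1/3}(M^i_t - M^i_s)$ is the maximum at time $1$ of a KPZ fixed point from the recentered rescaled initial condition $\tilde h_0^i := r^{-1/3}(\fh^i_s(r^{2/3} \cdot + S^i_s) - M^i_s) \le 0$. On $B_{\al, k}(t)$, specializing $L_1$ to scale $r$ forces $\tilde M^i \ge -\al(\log\log(2 + r^{-1}))^{1/3}$, while $L_2$ confines the rescaled argmax of the $\tilde h_0^i$-flow to $[-\al\log^{17}(r^{-1}), \al\log^{17}(r^{-1})]$. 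Covering this window by $O(\log^{17}(r^{-1}))$ unit intervals and invoking Proposition \ref{P:kpz-fp-prop} via spatial stationarity of $\sL$, the conditional density of $\tilde M^i$ on this event is bounded by $C\log^{17}(r^{-1}) \exp(\exp(c\al^{3/2}(\log\log(2 + r^{-1}))^{1/2}))$, which equals $r^{-o_\al(1)}$ since $\exp(\exp((\log\log r^{-1})^{1/2}))$ grows slower than any power of $r^{-1}$. Using independence of the $\fh^i$ across $i$ and chaining the $k-1$ constraints in $E_\ep(t)$ through the conditional densities yields
\[
	\P\bigl(E_\ep(t) \cap B_{\al, k}(t) \,\big|\, (\fh^i_s)_{i \le k}\bigr) \le C\ep^{k-1} r^{-(k-1)/3 - o_\al(1)}.
\]
Combined with $\P(E_\ep(s)) \le C\ep^{k-1}$ (bounded joint density of $(M^i_s)_i$ via Theorem \ref{T:BrownianAC}), this gives $\E[\mu_\ep(ds)\mu_\ep(dt)] \le C|s-t|^{-(k-1)/3 - o_\al(1)} \, ds \, dt$. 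The $\ga$-energy $\E \iint |s-t|^{-\ga}\,\mu_\ep(ds)\mu_\ep(dt)$ is therefore uniformly bounded for any $\ga < (4-k)/3$, provided $\al$ is chosen large enough that $\ga + (k-1)/3 + o_\al(1) < 1$ over the relevant range of $r$.

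For the first moment, Lemmas \ref{L:iterated-log} and \ref{L:noniterated-log} applied via the KPZ scale invariance of the narrow-wedge process give $\P(t \in B_{\al, k}) \to 1$ uniformly over $t \in [1/2, 1]$ as $\al \to \infty$, while Theorem \ref{T:BrownianAC} and independence of the $\fh^i$ supply a continuous strictly positive joint density for $(M^1_t - M^2_t, \dots, M^{k-1}_t - M^k_t)$ at the origin. Together these yield $\ep^{-(k-1)}\P(E_\ep(t) \cap B_{\al, k}(t)) \to \rho_\al(t) > 0$ as $\ep \to 0$, so $\E\mu_\ep([1/2, 1]) \ge c > 0$ for all sufficiently small $\ep$. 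Paley--Zygmund then yields $\P(\mu_\ep([1/2, 1]) \ge c/2) \ge c^2/(4C') > 0$ uniformly in small $\ep$; passing to a deterministic-subsequence weak limit $\mu$ produces a random measure on $A_k \cap \close{B_{\al, k}}$ with finite $\ga$-energy and $\P(\mu \ne 0) > 0$, and Frostman's lemma completes the proof. The main technical obstacle is precisely the cancellation of the double exponential in Proposition \ref{P:kpz-fp-prop} by the LIL restriction; this is exactly why $B_{\al, k}$ was designed with the specific $(s\log\log(2+s^{-1}))^{1/3}$ denominator in $L_1$, so that $(\tilde M^-)^{3/2}$ grows only like $(\log\log r^{-1})^{1/2}$ on the restricted event.
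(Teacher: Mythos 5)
Your approach is essentially the paper's: you build the same measures $\mu_\ep = \ep^{-(k-1)}\indic_{D_\ep}\,dt$ supported on the thinned approximating sets, estimate first and second moments, apply Paley--Zygmund, and conclude by Frostman. The second-moment step -- conditioning on the time-$s$ profiles, restricting the recentered maximum via the $L_1$ constraint (the LIL denominator), confining the argmax via $L_2$, covering by $O(\log^{17}(r^{-1}))$ windows, and invoking Proposition~\ref{P:kpz-fp-prop} so the double exponential becomes a sub-polynomial factor -- is exactly the paper's Lemma~\ref{L:second-moment}. You correctly identify this LIL-cancellation as the key mechanism.

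There is, however, a genuine gap in your first-moment lower bound. You assert that a continuous positive density for $(M^1_t - M^2_t, \dots, M^{k-1}_t - M^k_t)$ at the origin, together with $\P(t \in B_{\al, k}) \to 1$, yields $\ep^{-(k-1)}\P(E_\ep(t) \cap B_{\al,k}(t)) \to \rho_\al(t) > 0$. This does not follow as stated: the naive estimate $\P(E_\ep \cap B_{\al,k}) \ge \P(E_\ep) - \P(B_{\al,k}^c)$ is useless because $\P(B_{\al,k}^c)$ is a fixed positive constant while $\P(E_\ep) \to 0$. What is actually needed is a lower bound of order $\ep^{k-1}$ on the probability that $k$ independent draws from the \emph{conditional} laws $\mu_{t,\al}^i$ of $M^i_t$ given $\{t \in B_\al(\fh^i)\}$ fall in a common $\ep$-window. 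A positive unconditional density at the diagonal does not automatically survive the conditioning. The paper handles this via Lemma~\ref{L:tight-diagonal}: a H\"older-inequality argument showing that \emph{any} product measure whose factors put mass $\ge 1/2$ on a common compact interval assigns probability $\gtrsim \ep^{k-1}$ to the $\ep$-diagonal, and then observes (via tightness of $L_1, L_2$ and KPZ scale invariance) that $\mu_{t,\al}^i[-b,b] \ge 1/2$ for large enough $\al$. You need a step of this sort; once inserted, your argument closes.

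Two smaller points. First, your parenthetical ``provided $\al$ is chosen large enough that $\ga + (k-1)/3 + o_\al(1) < 1$'' has the dependence backwards: increasing $\al$ does not shrink the extra exponent (it enlarges the constant $c(\al)$ in $\exp(c\log^{1/2}(r^{-1}))$). The energy integral converges for any fixed $\al$ because $\exp(c\log^{1/2}(r^{-1})) = r^{-o(1)}$ as $r \to 0$ regardless of $c$; largeness of $\al$ is needed only for the first-moment bound. Second, your invocation of Theorem~\ref{T:BrownianAC} for the density of $M^i_t$ is a slight detour -- the paper uses $\max \fh^i_t \eqd t^{1/3}T$ with $T$ Tracy--Widom directly -- but either route gives the bounded-density upper bound $\P(E_\ep) \le C\ep^{k-1}$.
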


We build up to Proposition \ref{P:HT-prop} with a few lemmas.
First define sets 
$$
A_{k, \ep} = \{ t \in (0, \infty) : |\max \fh^i_t - \max \fh^j_t| \le \ep \text{ for all } i \ne j \in \{1, \dots, k\} \}.
$$
\begin{lemma}
\label{L:Akep-lemma}
Fix $k \in \N$. There exist $k$-dependent constants $c_1, c_2 > 0$ such that for all large enough $\al > 0$, and all $\ep \in (0, 1)$ and $t \in [1/2, 1]$ we have
$$
c_2 \ep^{k-1} \le \p(t \in A_{k, \ep} \cap B_{\al, k}) \le c_1 \ep^{k-1}.
$$
\end{lemma}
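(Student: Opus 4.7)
The two bounds separate. Both rest on the fact that the random variables $M_i := \max \fh_t^i$ are i.i.d., and that by KPZ scale invariance $M_i \eqd t^{1/3} M^{(0)}$ where $M^{(0)} = \max_y (\sA(y) - y^2)$ has a bounded continuous density, positive on all of $\R$ (a standard Fredholm determinant fact; compare Proposition \ref{P:kpz-fp-prop}). Hence for $t \in [1/2, 1]$ the density $p_M$ of each $M_i$ is uniformly bounded above by some constant $C$, and uniformly bounded below by some $c_0 > 0$ on a fixed compact interval $[a, b]$.

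For the upper bound, drop $B_{\al, k}$ and use $\p(A_{k,\ep} \cap B_{\al,k}) \le \p(A_{k,\ep})$. Writing $A_{k, \ep} = \{\max_i M_i - \min_i M_i \le \ep\}$ and integrating against the joint density of the order statistics,
$$
\p(A_{k, \ep}) = k! \int p_M(s) \int_{0 < u_1 < \cdots < u_{k-1} \le \ep} \prod_{j=1}^{k-1} p_M(s + u_j) \, du_1 \cdots du_{k-1} \, ds \le k C^{k-1} \ep^{k-1}.
$$
The lower bound has two parts. First, restricting the same integral to $s \in [a, b - \ep]$ and using $p_M \ge c_0$ on $[a, b]$ yields $\p(A_{k, \ep}) \ge c_3 \ep^{k-1}$ for a $k$-dependent $c_3 > 0$. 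Second, by a union bound $\p(A_{k, \ep} \cap B_{\al, k}^c) \le \sum_{i=1}^k \p(A_{k, \ep} \cap B_\al(\fh^i)^c)$; conditioning on $\fh^i$ (which determines $M_i$) and using that the other $M_j$ are independent with density $\le C$, the event $A_{k, \ep}$ forces $|M_j - M_i| \le \ep$ for $j \ne i$ and hence has conditional probability at most $(2C\ep)^{k-1}$, so $\p(A_{k, \ep} \cap B_\al(\fh^i)^c) \le (2C\ep)^{k-1} \p(B_\al(\fh^i)^c)$. The required lower bound follows provided $k(2C)^{k-1} \p(B_\al(\fh^1)^c) \le c_3/2$.

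The main obstacle is therefore to verify that $\p(B_\al(\fh^i)^c)$ can be made arbitrarily small uniformly in $t \in [1/2, 1]$ by choosing $\al$ large. For this I would use KPZ scale invariance to write $(M_s, S_s, I_s)_{s > 0} \eqd (t^{1/3} M_{s/t}, t^{2/3} S_{s/t}, t^{2/3} I_{s/t})_{s > 0}$ for the narrow-wedge process, and substitute $u = s/t$ in the suprema and infima defining $L_1(\fh, t)$ and $L_2(\fh, t)$. Since $\log\log(2 + (tu)^{-1}) \ge \log\log(2 + u^{-1})$ for $t \in [1/2, 1]$ and $u \in (0, 1/2]$, this yields $L_1(\fh, t) \ge L_1(\fh, 1)$ in distribution, and similarly $L_2(\fh, t) \le L_2(\fh, 1)$. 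Hence $\p(L_1(\fh, t) \le -\al) \le \p(L_1(\fh, 1) \le -\al)$ and $\p(L_2(\fh, t) \ge \al) \le \p(L_2(\fh, 1) \ge \al)$, both of which tend to zero as $\al \to \infty$ by Lemmas \ref{L:iterated-log} and \ref{L:noniterated-log} applied to the narrow wedge (an always decaying initial condition).
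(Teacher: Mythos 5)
Your proof is correct, and the upper bound is essentially the paper's. The lower bound takes a genuinely different route. The paper conditions on the event $\{t \in B_\al(\fh^i)\}$, shows the conditional law $\mu_{t,\al}$ of $\max \fh^i_t$ still puts mass $\ge 1/2$ on some fixed compact interval, and applies the H\"older-inequality estimate of Lemma \ref{L:tight-diagonal} directly to $\mu_{t,\al}^{\otimes k}$. You instead prove the unconditional lower bound $\p(A_{k,\ep}) \ge c_3\ep^{k-1}$ by an order-statistics computation and then subtract the cost of leaving $B_{\al,k}$, using the conditional independence of $M_j, j\ne i$, from $\fh^i$ to show $\p(A_{k,\ep}\cap B_\al(\fh^i)^c) \le (2C\ep)^{k-1}\p(B_\al(\fh^i)^c)$. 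Both arguments are valid; yours is more elementary and avoids the paper's dedicated Lemma \ref{L:tight-diagonal}, but requires $\p(B_\al(\fh^1)^c)$ to be made \emph{small} (not merely $\le 1/2$) uniformly over $t \in [1/2,1]$, which is a slightly stronger input. You supply this with a clean KPZ-rescaling argument comparing $L_1(\fh,t), L_2(\fh,t)$ to their $t=1$ counterparts; this is more explicit than the paper's brief appeal to "tightness by scale invariance." One small wording issue: the claim "$L_1(\fh,t)\ge L_1(\fh,1)$ in distribution" is not literally correct as stated (increasing the denominator does not monotonically increase a ratio whose numerator can change sign), but the consequence you actually use --- that $\{L_1(\fh,t)\le-\al\}$ is, after the distributional substitution, pathwise contained in $\{L_1(\fh,1)\le-\al\}$ for $\al>0$ --- does hold, because the event forces the numerator to be negative at the infimizing point.
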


We will use the following simple lemma to prove Lemma \ref{L:Akep-lemma}.
\begin{lemma}
	\label{L:tight-diagonal}
	Let $\mu$ be a probability measure on $\R$ and let $b \ge 1$ be such that $\mu[-b, b] \ge 1/2$. Let $k \in \N, k \ge 2$, and let $\mu^k = \mu \otimes \cdots \otimes \mu$ denote the $k$-fold product measure. Then for all $\ep \in (0, 1)$ we have
	$$
	\mu^k(x \in \R^k : |x_i - x_j| \le \ep \text{ for all } i \ne j) \ge \frac{\ep^{k-1}}{2^{3k - 2} b^{k-1}}.
	$$
\end{lemma}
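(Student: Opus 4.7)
The plan is to reduce the problem to a pigeonhole-type counting via a well-chosen partition. Specifically, I would partition $[-b, b]$ into $N = \lceil 2b/\ep \rceil$ consecutive intervals $I_1, \dots, I_N$ each of length at most $\ep$, and set $p_j = \mu(I_j)$. The key observation is that if all $k$ samples happen to land in the same interval $I_j$, then automatically $|x_i - x_l| \le \ep$ for every pair $i \ne l$, so
$$
\mu^k\bigl(\{x \in \R^k : |x_i - x_l| \le \ep \mathforall i \ne l\}\bigr) \ge \sum_{j=1}^N p_j^k.
$$

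The next step is to lower bound $\sum_j p_j^k$ using only the constraints $\sum_j p_j \ge 1/2$ (from $\mu([-b,b]) \ge 1/2$) and that there are $N$ terms. By Jensen's inequality applied to the convex function $t \mapsto t^k$,
$$
\sum_{j=1}^N p_j^k \ge N \cdot \Bigl(\frac{1}{N}\sum_{j=1}^N p_j\Bigr)^k \ge \frac{1}{2^k N^{k-1}}.
$$
Finally, the hypotheses $b \ge 1$ and $\ep \in (0,1)$ give $b/\ep \ge 1$, so $N \le 2b/\ep + 1 \le 3b/\ep$, and hence
$$
\sum_{j=1}^N p_j^k \ge \frac{\ep^{k-1}}{2^k \cdot 3^{k-1} \cdot b^{k-1}}.
$$
Since $3^{k-1} \le 4^{k-1} = 2^{2k-2}$, the prefactor $2^k \cdot 3^{k-1} \le 2^{3k-2}$, which matches the bound in the statement.

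There is no real obstacle here; the only thing one might worry about is whether the weaker bound from pigeonhole (extracting a single heavy interval of mass $\gtrsim \ep/b$ and then requiring all $k$ samples to hit it, which would give only $\ep^k$ in the numerator) is enough. It is not, and the paragraph above explains the fix: keeping the full sum $\sum p_j^k$ and invoking the convexity inequality $\sum p_j^k \ge c^k/N^{k-1}$ (which is saturated by the uniform distribution on the $N$ intervals) converts the implicit $1/N$ factor from pigeonhole into an extra $b/\ep$ gain, giving the correct power $\ep^{k-1}$ rather than $\ep^k$.
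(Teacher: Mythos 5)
Your proof is correct and follows essentially the same approach as the paper: partition into $\epsilon$-length intervals, observe that all $k$ samples landing in a common interval forces pairwise distance $\le \epsilon$, and lower-bound $\sum_j p_j^k$ via convexity. The paper phrases this last step as Hölder's inequality rather than Jensen, but it is the same power-mean inequality, and the constant bookkeeping is nearly identical.
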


\begin{proof}
	First, we have
	\begin{align*}
	\mu^k(x \in \R^k : |x_i - x_j| \le \ep \text{ for all } i \ne j) &\ge \sum_{i\in \Z \cap [-\fl{b/\ep} - 1, \fl{b/ \ep}]} \lf(\mu[i\ep, (i+1)\ep) \rg)^k.
	\end{align*}
	We can recognize the right hand side above as the $k$th power of an $\ell^k$-norm on $\R^{2 \fl{b/\ep} + 2}$. H\"older's inequality then gives
	\begin{align*}
	\sum_{i\in \Z \cap [-\fl{b/\ep} - 1, \fl{b/\ep}]} \lf(\mu[i\ep, (i+1)\ep) \rg)^k \ge \frac{\lf(\sum_{i\in \Z \cap [-\fl{b/\ep} - 1, \fl{b/\ep}]} \mu[i\ep, (i+1)\ep) \rg)^k}{(2 \fl{b/\ep} + 2)^{k-1}}
	\end{align*}
	which gives the result after simplification, using that $\mu[-b, b] \ge 1/2$ and $2 \fl{b/\ep} + 2 \le 4b/\ep$.
\end{proof}

\begin{proof}[Proof of Lemma \ref{L:Akep-lemma}]
We start with the simpler upper bound. For this, it is enough to show that $\p (t \in A_{k, \ep}) \le c_1 \ep^{k-1}$ for all $\ep > 0$. Now, for all $t$, by KPZ scale invariance the random variables $\max \fh^1_t, \dots, \max \fh^k_t$ are independent and equal in distribution to $t^{1/3} T$, where $T$ is a Tracy-Widom GUE random variable. Therefore 
\begin{align*}
\p (t \in A_{k, \ep}) &\le \p(|\max \fh^1_t - \max \fh^j_t| \le \ep \text{ for all } j = 2, \dots, k-1) \\
&\le \max_{x \in \R} \p(T \in [x-\ep t^{1/3}, x + \ep t^{1/3}])^{k-1}\\
&\le c_1\ep^{k-1},
\end{align*}
where the final inequality uses that $T$ has a Lebesgue density that is bounded above.

We move on to the lower bound. Fix $i \in \{1, \dots, k\}$. For any fixed $t \in [1/2, 1]$ the processes $L_1(\fh^i, t), L_2(\fh^i, t) \in \R$ are finite almost surely by Lemmas \ref{L:iterated-log} and \ref{L:noniterated-log}. This extends to tightness of the families $L_1(\fh^i, t), L_2(\fh^i, t), t \in [1/2, 1]$ by KPZ scale invariance, so we can find $\al_0 > 0$ such that
\begin{equation}
\label{E:tBal}
\P(t \in B_\al(\fh^i)) \ge 1/2
\end{equation}
for all $t \in [1/2, 1], \al \ge \al_0$. Next, let $\mu_{t, \al}$ be the conditional distribution of $\max \fh^i_t$ on the event $\{t \in B_\al(\fh_i)\}$. Since $\max \fh^i_t \eqd t^{1/3} T$ for all $i$, by \eqref{E:tBal} there exists a $b > 0$ such that for all $t \in [1/2, 1], \al \ge \al_0$, we have \begin{equation}
\label{E:mutal}
\mu_{t, \al}[-b, b] \ge 1/2.
\end{equation}
Finally, we can write
\begin{equation*}
\p(t \in A_{k, \ep} \cap B_{\al, k}) = \mu^k_{t, \al}(x \in \R^n : |x_i - x_j| \le \ep \text{ for all } i \ne j) \prod_{i=1}^k \p B_\al(\fh^i),
\end{equation*}
which is bounded below by $c_2 \ep^{k-1}$ for all $t \in [1/2, 1], \al \ge \al_0$ by \eqref{E:tBal}, \eqref{E:mutal}, and Lemma \ref{L:tight-diagonal}.
\end{proof}

Lemma \ref{L:Akep-lemma} is the first moment bound that we will need to estimate the Hausdorff dimension of $A_k \cap \close{B_{\al, k}}$. We also need a complementary second moment bound. For this bound, it is essential that we work with $A_{k, \ep} \cap B_{\al, k}$ rather than just $A_{k, \ep}$.

\begin{lemma}
\label{L:second-moment}
Fix $k \in \N$ and $\al > 0$. There exists a constant $c = c(\al, k) > 0$ such that for all $\ep \in (0, 1), t, t + s \in [1/2, 1]$ we have
\begin{equation}
\label{E:ptsd}
\p(t + s\in A_{k, \ep} \cap B_{\al, k} \mid t \in A_{k, \ep} \cap B_{\al, k}) \le  \lf(\ep s^{-1/3} \rg)^{k-1} \exp(c \log^{1/2}(2 + s^{-1})).
\end{equation}
\end{lemma}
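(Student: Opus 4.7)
The plan is to condition on the time-$t$ filtration $\sigma_t := \sigma(\fh^i_r : r \le t,\ i = 1, \ldots, k)$ and reduce the estimate to a conditional density bound for each of the maxima $M^i_{t+s} := \max \fh^i_{t+s}$. The event $E_1 := \{t \in A_{k,\ep} \cap B_{\al,k}\}$ is $\sigma_t$-measurable, and since the $\fh^i$ are driven by independent landscapes, the pairs $(M^i_{t+s}, \indic_{E'_i})_{i=1}^k$ are conditionally independent given $\sigma_t$, where $E'_i := \{t + s \in B_\al(\fh^i)\}$. Setting $\mu_i(A) := \p(M^i_{t+s} \in A,\, E'_i \mid \sigma_t)$ and writing $E_2 = \{|M^i_{t+s} - M^j_{t+s}| \le \ep\ \forall i \ne j\} \cap \bigcap_i E'_i$, this gives
$$\p(E_2 \mid \sigma_t) = \int_{\R^k} \indic(|u_i - u_j| \le \ep\ \forall i \ne j)\, d\mu_1(u_1) \cdots d\mu_k(u_k).$$
The goal is then to show that each $\mu_i$ has density bounded uniformly by $s^{-1/3} \exp(c \log^{1/2}(2 + s^{-1}))$ on its support: fixing $u_1$ and integrating each remaining $u_i$ over $[u_1 - \ep, u_1 + \ep]$ gives $(2\ep)^{k-1}$ times the $(k-1)$-th power of the density bound, while $\int d\mu_1 \le 1$. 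Integrating against $\indic_{E_1}$ and dividing by $\p(E_1)$ then yields \eqref{E:ptsd}.

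To obtain the density bound, I would first extract the quantitative consequences of $t + s \in B_\al(\fh^i)$ by evaluating $L_1$ and $L_2$ at the shift $r = s$. Writing $A_s := \al(s \log\log(2+s^{-1}))^{1/3}$ and $C_s := \al s^{2/3}\log^{17}(s^{-1})$, on $E'_i$ we get $M^i_{t+s} \ge M^i_t - A_s$ and $S^i_{t+s}, I^i_{t+s} \in [S^i_t - C_s, S^i_t + C_s]$, so that the unrestricted $M^i_{t+s}$ is actually the max over $|y - S^i_t| \le C_s$, and $\mu_i$ is supported on $[M^i_t - A_s, \infty)$. Rescale via KPZ scale invariance: setting $h'_0(x) := \fh^i_t(x + S^i_t) - M^i_t$ and $\tilde h_0(x) := s^{-1/3} h'_0(s^{2/3} x)$, the function $\tilde h_0$ is $\le 0$, and a time-$1$ KPZ fixed point $\tilde \fh_1$ from $\tilde h_0$ (driven by the independent rescaled landscape increment from $[t, t+s]$) satisfies $M^i_{t+s} - M^i_t \eqd s^{1/3} \max_{|z| \le R_s} \tilde \fh_1(z)$ on $E'_i$, where $R_s := C_s s^{-2/3} = \al \log^{17}(s^{-1})$. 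Cover $[-R_s, R_s]$ by $O(R_s)$ unit intervals $[j-1, j+1]$; by spatial stationarity of $\sL$, each such interval is equivalent to $[-1, 1]$ for a KPZ fixed point with initial condition $\tilde h_0(\cdot + j) \le 0$, so Proposition \ref{P:kpz-fp-prop} applies to each. A union bound and unscaling yield
$$\mu_i([u, u + \delta]) \le c R_s\, \delta\, s^{-1/3} \exp\bigl(\exp(c((u - M^i_t)^-/s^{1/3})^{3/2})\bigr).$$

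The main obstacle is the double exponential factor from Proposition \ref{P:kpz-fp-prop}, and taming it is precisely the purpose of the thinning by $B_\al$. On the support of $\mu_i$ the constraint $u \ge M^i_t - A_s$ forces $(u - M^i_t)^- \le A_s$, so
$$(A_s / s^{1/3})^{3/2} = \al^{3/2}(\log\log(2 + s^{-1}))^{1/2}.$$
With $L := \log(2 + s^{-1})$, the inner exponential is $\exp(c \al^{3/2} (\log L)^{1/2})$, which grows slower than any positive power of $L$; the outer exponential is therefore bounded by $\exp(c' L^{1/2})$ for a constant $c' = c'(\al)$ and all $s \in (0, 1/2]$. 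The polylog factor $R_s$ is absorbed into the same expression, giving the promised density bound and completing the proof. Without the thinning by $B_\al$ the quantity $(u - M^i_t)^-$ could be of order $s^{1/3}\log^{2/3}(s^{-1})$ or worse from only generic regularity (cf.\ Lemma \ref{L:Holder-1/3}), and iterating $\exp(\cdot)$ would blow up and destroy the estimate.
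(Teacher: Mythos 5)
Your proposal is correct and follows essentially the same route as the paper: condition at time $t$ via the Markov property, use the $B_\alpha$ constraints at $t+s$ to localize the argmax to an interval of width $O(s^{2/3}\log^{17}(s^{-1}))$ and to bound the max decrement by $\alpha(s\log\log(2+s^{-1}))^{1/3}$, union-bound over $O(\log^{17}(s^{-1}))$ spatial cells of width $s^{2/3}$, apply Proposition~\ref{P:kpz-fp-prop} after KPZ rescaling and spatial stationarity, and observe that the iterated-log input makes the double exponential collapse to $\exp(c\log^{1/2}(2+s^{-1}))$. The only cosmetic difference is that you phrase the spatial union bound as a uniform density bound on each conditional marginal $\mu_i$ and then multiply, while the paper union-bounds directly over $k$-tuples of grid cells; by conditional independence these are the same computation.
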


\begin{proof} 
Fix $t \in [1/2, 1]$ and for every $i = 1, \dots, k$ define the recentered KPZ fixed points 
$$
\tilde \fh^i_s(x) := \fh^i_{t+s}(S^i_t + x) - M^i_t, \qquad s > 0, x \in \R
$$ 
where $M^i_t, S^i_t$
are as in Lemmas \ref{L:iterated-log} and \ref{L:noniterated-log} for the $\fh^i$. The processes $\tilde \fh^1, \dots, \tilde \fh^k$ form a collection of independent KPZ fixed points run from independent non-positive initial conditions $T_{S^1_t} \fh^1_{t}, \dots, T_{S^k_t} \fh^k_{t}$. Now, conditionally on the event $\{t \in A_{k, \ep} \cap B_{\al, k}\}$ the event $\{t + s\in A_{k, \ep} \cap B_{\al, k}\}$ is contained in the event where:
\begin{enumerate}[label=(\roman*)]
	\item For all $i \in \{2, \dots, k\}$ we have 
	$$
	|\max \tilde \fh^1_s - \max \tilde \fh^j_s| \le 2\ep.
	$$ 
	\item Let $\tilde M^i_s, \tilde S^i_s$ be as in Lemmas \ref{L:iterated-log}, \ref{L:noniterated-log} for the $\tilde \fh^i$. Then for all $i \in \{1, \dots, k\}$, we have
	$$
	\tilde M^i_s > -\al(s \log \log (2 + s^{-1}))^{1/3} \qquad \mathand \qquad |\tilde S^i_s| < \al s^{2/3} \log^{17}(s^{-1}).
	$$
\end{enumerate}

Next, the event $\{t \in A_{k, \ep} \cap B_{\al, k}\}$ is measurable with respect to the $\sig$-algebra generated by $\fh^i_s, 0 \le s \le t$. Therefore by the Markov property for $\fh^i$, the conditional probability in \eqref{E:ptsd} is bounded above by the maximal possible probability of (i) and (ii) both occurring, where the maximum is taken over all independent KPZ fixed points $\tilde \fh^i(\cdot; h_i)$ started from non-positive deterministic initial conditions $h_1, \dots, h_k$.

For such a collection of KPZ fixed points $\tilde \fh^i(\cdot; h_i)$ and $j \in \Z$, let 
$$
\tilde M_s^i(j) = \max_{x \in [s^{2/3}j, s^{2/3}(j + 2)]} \tilde \fh_s^i(x; h_i).
$$ Then by a union bound,
\begin{align*}
\p( \text{(i) and (ii) above}) &\le \sum_{\substack{j_1, \dots, j_k \in \\
		\Z \cap [\fl{-\al \log^{17}(s^{-1})}, \fl{\al \log^{17}(s^{-1})}]}} \p A(j_1, \dots, j_k)
\end{align*} 
where $	A(j_1, \dots, j_k)$ is the set where
\begin{itemize}[nosep]
	\item $|\tilde M_s^i(j_i) - \tilde M_s^1(j_1)| \le 2\ep$ for all $i = 2, \dots, k$, and
	\item $\tilde M_s^i(j_i) \ge - \al (s\log \log (2 + s^{-1}) )^{1/3}$ for all $i =1 , \dots, k$.
\end{itemize}
Using the independence of the $\tilde \fh^i(\cdot; h_i)$, KPZ scale invariance, and spatial stationarity of $\sL$, we can bound $\p A(j_1, \dots, j_k)$ above by
$$
\sup_{h \le 0} \sup_{m \ge -\al (\log \log(2 + s^{-1}))^{1/3}}  \p\lf(\max_{x \in [-1, 1]} \fh_1(x; h) \in [m, m + 2 \ep s^{-1/3}]\rg)^{k-1}.
$$
By Proposition \ref{P:kpz-fp-prop}, this is bounded above by
$$
\lf(2 \ep s^{-1/3} c\exp(\exp (c \al^{3/2} (\log \log (2 + s^{-1}))^{1/2}\rg)^{k-1}.
$$
A bit of simplification then gives the result.
\end{proof}

To put everything together and prove Proposition \ref{P:HT-prop}, we use a consequence of Frostman's lemma for Hausdorff dimension and a compactness argument, \cite[Lemma 6.2]{schramm2011quantitative}.

\begin{lemma}
	\label{L:Dn-down-to-D}
	Let $D_1 \supset D_2 \supset D_3 \dots$ be a decreasing sequence of compact subsets of $[1/2, 1]$ with $D_n \cvgdown D$. Let  $\mu_n$ be a sequence of finite measures with $\supp(\mu_n) = D_n$ and suppose that there exists $c > 0$ such that 
	\begin{equation}
	\label{E:mom-2}
	\mu_n[1/2, 1] \ge 1/c, \qquad I_\ga(\mu_n) := \int_{1/2}^1 \int_{1/2}^1 |t - s|^{-\ga} d \mu_n(t) d \mu_n(s) \le c
	\end{equation}
	for all $n$.
	Then $\dim(D) \ge \ga$.
\end{lemma}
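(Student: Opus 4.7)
My plan is to extract a subsequential weak-$*$ limit $\mu$ of the $\mu_n$, verify that $\mu$ concentrates on $D$ with positive total mass and finite $\gamma$-energy, and then invoke Frostman's energy criterion for Hausdorff dimension to conclude $\dim(D)\ge \gamma$.

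First I would extract the limit. The uniform upper bound on total masses is free: since $|t-s|\le 1/2$ for $t,s\in[1/2,1]$ we have $|t-s|^{-\gamma}\ge 2^{\gamma}$, so $I_\gamma(\mu_n)\ge 2^{\gamma}\mu_n([1/2,1])^2$, and combined with the hypothesis $I_\gamma(\mu_n)\le c$ this bounds $\mu_n([1/2,1])$ uniformly. By Banach--Alaoglu (or Prokhorov) applied to the compact metric space $[1/2,1]$, after passing to a subsequence we may assume $\mu_n\to\mu$ weakly-$*$ for some finite Borel measure $\mu$ on $[1/2,1]$. For each fixed $m$ the set $D_m$ is closed and $\mu_n(D_m)=\mu_n([1/2,1])\ge 1/c$ for all $n\ge m$, so the Portmanteau theorem for closed sets gives $\mu(D_m)\ge \limsup_n \mu_n(D_m)\ge 1/c$. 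Since $D_m\cvgdown D$ as decreasing compacts, continuity of the finite measure $\mu$ along decreasing sequences yields $\mu(D)\ge 1/c>0$.

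To bound the $\gamma$-energy of $\mu$ I would use the truncated kernel $f_k(t,s):=\min(|t-s|^{-\gamma},k)$ on $[1/2,1]^2$. This $f_k$ is bounded, and it is continuous everywhere: away from the diagonal this is clear, while near the diagonal $|t-s|^{-\gamma}\to +\infty$, so $f_k\equiv k$ in a neighborhood of the diagonal (where we define $f_k(t,t):=k$). Since $\mu_n\otimes\mu_n\to\mu\otimes\mu$ weakly and $f_k$ is bounded continuous,
\[
\int\!\!\int f_k\,d\mu\otimes d\mu\;=\;\lim_{n\to\infty}\int\!\!\int f_k\,d\mu_n\otimes d\mu_n\;\le\;\liminf_{n\to\infty} I_\gamma(\mu_n)\;\le\; c
\]
for every $k\in\N$. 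Monotone convergence as $k\to\infty$ upgrades this to $I_\gamma(\mu)\le c$. The restriction $\mu|_D$ is then a finite positive measure on $D$ with finite $\gamma$-energy, and Frostman's energy criterion for Hausdorff dimension gives $\dim(D)\ge\gamma$.

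The only delicate point is the lower semicontinuity of the energy under weak convergence in the presence of the singular kernel $|t-s|^{-\gamma}$, and the truncation argument above is the standard way to sidestep it. Everything else is routine measure-theoretic bookkeeping, so I would expect the truncation step to be the main (minor) obstacle in the write-up.
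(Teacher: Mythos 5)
The paper does not prove this lemma; it simply cites \cite[Lemma 6.2]{schramm2011quantitative}, where the same argument you give appears. Your proof is correct and is the standard one: uniform mass bound from $|t-s|^{-\ga}\ge 2^\ga$, weak-$*$ compactness on the compact space $[1/2,1]$, Portmanteau together with downward continuity of $\mu$ along $D_m\cvgdown D$ to get $\mu(D)\ge 1/c$, truncation $f_k=\min(|t-s|^{-\ga},k)$ plus monotone convergence to handle lower semicontinuity of the energy under weak convergence, and finally Frostman's energy criterion applied to $\mu|_D$.
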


\begin{proof}[Proof of Proposition \ref{P:HT-prop}]
Fix $k \in \{2, 3\}$, and $\al > 0$ large enough so that Lemma \ref{L:Akep-lemma} holds.
By continuity of the KPZ fixed point for $t > 0$ we have that $A_{k, \ep} \cvgdown A_k$ almost surely as $\ep \to 0$. Therefore $\close{D_\ep} \cvgdown D$ almost surely as $\ep \to 0$, where $D_\ep := A_{k, \ep} \cap B_{\al, k}$ and $D := A_k \cap \close{ B_{\al, k}}$.
For $\ep \in (0, 1)$ define the random measure $\mu_\ep$ on $[1/2, 1]$ with support $\close{D_\ep}$ by letting
 \begin{equation*}
 \mu_\ep(A) = \frac{1}{\ep^{k-1}} \int_A \indic(t \in D_\ep) dt.
 \end{equation*} 
 By the lower bound in Lemma \ref{L:Akep-lemma}, there exists $c > 0$ such that $\E \mu_\ep [1/2, 1] \ge c$ for all $\ep > 0$. Next, by Lemma \ref{L:second-moment} and the upper bound in Lemma \ref{L:Akep-lemma} we have
 \begin{equation}
 \label{E:muepga}
 \begin{split}
\E I_{\ga}(\mu_\ep)
&= \frac{2}{\ep^{2k - 2}} \int_{1/2}^1 \int_0^{1- t} s^{-\ga} \p( t + s \in D_\ep \; | \; t \in D_\ep) \p(t \in D_\ep) ds dt \\
&\le  2c_1 \int_{0}^{1/2} s^{-\ga} s^{-(k-1)/3} \exp(c \log^{1/2}(2 + s^{-1})) ds.
\end{split}
 \end{equation} 
The final integral is independent of $\ep$ and finite whenever $\ga < (4-k)/3$. The particular case when $\ga = 0$ shows that $\E (\mu_\ep[1/2, 1])^2$ is uniformly bounded over $\ep \in (0, 1)$. In particular, by the lower bound on $\E \mu_\ep[1/2, 1]$ and the Paley-Zygmund inequality, for all large enough $c > 0$ we have
$$
\p(\mu_\ep[1/2, 1] > 1/c) \ge 1/c
$$
for all $\ep \in (0, 1)$. Therefore by \eqref{E:muepga}, for every $\ga < (4-k)/3$ there exists $c_\ga > 0$ such that
\begin{equation}
\label{E:quahep}
\p(\mu_\ep [1/2, 1] \ge 1/c_\ga, I_{\ga}(\mu_\ep) \le c_\ga) \ge 1/c_\ga. 
\end{equation}
Since $\p(A_n \text{ i.o.}) \ge \limsup_{n \to \infty} \p A_n$ for any sequence of events $A_n$, with probability $1/c_\ga$ we can find a (random) sequence $\ep_n \to 0$ such that the event in \eqref{E:quahep} holds for all $\ep_n, n \in \N$. Applying Lemma \ref{L:Dn-down-to-D} implies that $\dim(D) \ge \ga$ with probability $1/c_\ga > 0$, as desired.
\end{proof}

\begin{proof}[Proof of Theorem \ref{T:main}]
This is an immediate consequence of Proposition \ref{P:alk} and Theorems \ref{T:upper-bd} and \ref{T:lower-bd}.
\end{proof}

\section{The proof of Proposition \ref{P:kpz-fp-prop}}
\label{A:fredholm-properties}

\subsection{A Fredholm determinant formula}

To prove Proposition \ref{P:kpz-fp-prop} we will use a Fredholm determinant formula for the KPZ fixed point from \cite{matetski2016kpz}. We introduce only the minimum background that the reader will need to understand our manipulations. The interested reader should refer to \cite{matetski2016kpz, quastel2017totally} and references therein for more background.

To set up the formula, let $L^2(\R)$ be the space of square integrable functions $f:\R \to \mathbb C$. If $K$ is an operator on $L^2(\R)$ acting through its kernel $(Kf)(x) = \int f(y) K(x, y) dy$, its \textbf{Fredholm determinant} is given by 
$$
\det (I + K) = 1 + \sum_{n=1}^\infty \frac{1}{n!} \int \det[K(x_i, x_j)]_{i, j = 1}^n d x_1 \dots d x_n.
$$
This Fredholm determinant is well-defined and finite if $K$ has finite trace norm $\|K\|_1 = \operatorname{tr}(\sqrt{K^* K})$, where $\sqrt{K^* K}$ is the unique positive square root of the operator $K^* K$. If $\|K\|_1 < \infty$, we say that $K$ is trace class. We note for later use the standard estimates
\begin{align}
\label{E:AB-A-B}
\|ABC \|_1 \le \|A\|_{\operatorname{op}} \|C\|_{\operatorname{op}} \|B\|_1 &\le \|A\|_1 \|B\|_1 \|C\|_1,   \\
\label{E:det-cty}
|\det (I + A) - \det (I + B)| &\le \|A-B\|_1 e^{1 + \|A\|_1 + \|B\|_1},
\end{align}
see the first few sections of \cite[Chapter 1]{simon2005trace} for the first inequality and \cite[Theorem 3.4]{simon2005trace} for the second one.

Now, define $\operatorname{UC}$ to be the set of upper semicontinuous functions $f:\R \to \R \cup \{-\infty\}$ that are not identically equal to $-\infty$ and for all $x$ satisfy $f(x) \le \al  + \ga |x|$ for some constants $\al, \ga \in \R$. Let $\operatorname{LC} = \{f: - f\in \operatorname{UC}\}$. For any $f \in \operatorname{UC}, g \in \operatorname{LC},$ and $t > 0$ there are trace class \textbf{hypo} and \textbf{epi} operators $\mathbf{K}^{\operatorname{hypo}(f)}_t$ and $\mathbf{K}^{\operatorname{epi}(g)}_{-t}$ on $L^2(\R)$
such that
\begin{equation}
\label{E:Fredholm}
\P(\fh_t(y ; f) \le g(y) \text{ for all } y \in \R) = \det \lf(I - \mathbf{K}^{\operatorname{hypo}(f)}_{t/2}\mathbf{K}^{\operatorname{epi}(g)}_{-t/2}\rg).
\end{equation}
Note that the product $\mathbf{K}^{\operatorname{hypo}(f)}_{t/2}\mathbf{K}^{\operatorname{epi}(g)}_{-t/2}$ is trace class by \eqref{E:AB-A-B} so the Fredholm determinant is well-defined. The hypo and epi operators are related through the formula
\begin{equation}
\label{E:bfK-epi}
\mathbf{K}^{\operatorname{epi}(-f)}_{-t} = \rho \mathbf{K}^{\operatorname{hypo}(f)}_t \rho,  
\end{equation}
where $\rho$ is the reflection operator on $L^2(\R)$ given by $\rho f(x) := f(-x)$ (see \cite[Proposition 4.4]{matetski2016kpz}, so to describe \eqref{E:Fredholm} explicitly we just need to describe $\mathbf{K}^{\operatorname{hypo}(f)}_{t}$.
We give a definition of the operator $\mathbf{K}^{\operatorname{hypo}(f)}_t$ that matches that of \cite[Section 2]{corwin2021exceptional}, which differs from the presentation in \cite{matetski2016kpz} by a conjugation that does not affect the determinant. 
For $f \in \operatorname{UC}$ and $t > 0$ the operator $\mathbf{K}^{\operatorname{hypo}(f)}_{t}$ is a trace class limit of a product of operators:
\begin{equation}
\label{E:kpzhypof}
\mathbf{K}^{\operatorname{hypo}(f)}_{t} = \lim_{\ell_1 \to -\infty, \ell_2 \to \infty} \Ga_t (\mathbf{S}_{t, \ell_1})^* \mathbf{P}_{\ell_1, \ell_2}^{\operatorname{Hit} f} \mathbf{S}_{t, -\ell_2} \Ga_t.
\end{equation}
It remains to define the operators on the right side above. The operator $\Ga_t$ is multiplication by the function
$$
\Ga_t(z) = \exp(\ka_t \operatorname{sgn}(z) |z|^{3/2}),
$$
where $\ka_t > 0$ is any sufficiently small constant. Next, for $t > 0$ and $x \in \R$ let $\mathbf{S}_{t, x}$ be the integral operator with kernel $\mathbf{S}_{t, x}(z, y) = \mathbf{S}_{t, x}(z - y)$, where
$$
\mathbf{S}_{t, x}(z) = t^{-1/3} \exp(\tfrac{2x^3}{3t^2} -\tfrac{z x}t) \operatorname{Ai}(-t^{1/3}z + t^{-4/3} x^2),
$$
and $\operatorname{Ai}(z)$ is the classical Airy function. Finally, for $\ell_1 < \ell_2$ and $f \in \operatorname{UC}$ we define 
$
\mathbf{P}_{\ell_1, \ell_2}^{\operatorname{Hit} (f)} = I - \mathbf{P}_{\ell_1, \ell_2}^{\operatorname{No \; hit} (f)}, 
$
where $\mathbf{P}_{\ell_1, \ell_2}^{\operatorname{No \; hit} (f)}$ is an integral operator with kernel
$$
\mathbf{P}_{\ell_1, \ell_2}^{\operatorname{No \; hit} (f)}(u_1, u_2) = \P_{B(\ell_1) = u_1, B(\ell_2) = u_2}(B(y) > f(y) \text{ for } y \in [\ell_1, \ell_2]) \frac{1}{\sqrt{4 \pi(\ell_2 - \ell_1)}} e^{-\tfrac{(u_2 - u_1)^2}{4(\ell_2 - \ell_1)}}.
$$
Here the probability above denotes the probability that a Brownian bridge (of variance $2$) on the interval $[\ell_1, \ell_2]$ starting at $u_1$ and finishing at $u_2$ stays above the function $f$. Noting the Gaussian factor, the whole expression above can be equivalently thought of as a transition density for Brownian motion from $(u_1, \ell_1)$ to $(u_2, \ell_2)$, killed if it goes below $f$. While the limit definition \eqref{E:kpzhypof} is somewhat abstract, if $f$ is $-\infty$ outside of a compact interval $[-m_1, m_2]$, the expression under the limit in \eqref{E:kpzhypof} is the same for any $\ell_1 \le - m_1, m_2 \le \ell_2$, see the discussion after (4.1) in \cite{matetski2016kpz}. 

Now, by \eqref{E:Fredholm} the CDF $F_M(a) = \det(1 - \mathbf{K}^{\operatorname{hypo}(h_0)}_{1/2}\mathbf{K}^{\operatorname{epi}(g_a)}_{-1/2})$, where
$$
g_a(x) = 
\begin{cases}
a, \qquad x \in [-1, 1] \\
\infty, \qquad \text{ else.}
\end{cases}
$$
This expression for $F_M$ and some trace class estimates from \cite{matetski2016kpz} allow us to reduce Proposition \ref{P:kpz-fp-prop} to the following lemma.
\begin{lemma}
	\label{L:crux-lemma}
	There exists an absolute constant $c' > 0$ such that for any $a < b \in \R$ we have
	\begin{equation}
	\label{E:second-CDF-bd}
	\|\mathbf{K}^{\operatorname{hypo}(-g_{b})}_{1/2} -\mathbf{K}^{\operatorname{hypo}(- g_{a})}_{1/2}\|_1 \le c' (b-a) e^{c'(a^-)^{3/2}}.
	\end{equation}
\end{lemma}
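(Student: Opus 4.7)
The plan is to work directly with the explicit formula \eqref{E:kpzhypof} for the hypo operator. Because $-g_a$ equals $-\infty$ off $[-1,1]$, the limit in \eqref{E:kpzhypof} stabilizes once $\ell_1 \le -1$ and $\ell_2 \ge 1$, giving
\[
\mathbf{K}^{\operatorname{hypo}(-g_a)}_{1/2} = \Gamma_{1/2}(\mathbf{S}_{1/2,-1})^*\mathbf{P}_{-1,1}^{\operatorname{Hit}(-g_a)}\mathbf{S}_{1/2,1}\Gamma_{1/2}.
\]
Subtracting, the difference of interest factors as
\[
\mathbf{K}^{\operatorname{hypo}(-g_b)}_{1/2}-\mathbf{K}^{\operatorname{hypo}(-g_a)}_{1/2} = \Gamma_{1/2}(\mathbf{S}_{1/2,-1})^*\Delta_{a,b}\mathbf{S}_{1/2,1}\Gamma_{1/2},
\]
where $\Delta_{a,b} := \mathbf{P}_{-1,1}^{\operatorname{No \; hit}(-g_a)} - \mathbf{P}_{-1,1}^{\operatorname{No \; hit}(-g_b)}$. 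The problem reduces to estimating $\Delta_{a,b}$ sandwiched by the Airy/exponential factors.

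The second step is to bound the kernel of $\Delta_{a,b}$ using the reflection principle for the variance-$2$ Brownian bridge on $[-1,1]$. A short computation gives, for $u_1, u_2 \ge -c$,
\[
\mathbf{P}_{-1,1}^{\operatorname{No \; hit}(-g_c)}(u_1, u_2) = \tfrac{1}{\sqrt{8\pi}}\bigl[e^{-(u_2-u_1)^2/8} - e^{-(u_1+u_2+2c)^2/8}\bigr],
\]
with the no-hit probability vanishing when $\min(u_1, u_2) < -c$. Writing $\Delta_{a,b}$ as $\int_a^b \partial_c(\cdot)\, dc$ on the bulk region $\{u_1, u_2 \ge -a\}$ yields $|\Delta_{a,b}(u_1, u_2)| \le C(b-a)$ there, by boundedness of $y e^{-y^2/8}$. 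In the complementary ``boundary strip'' $\{u_i \in (-b, -a)\text{ for some }i\}$ only the $-g_b$ term survives, is bounded by the free Gaussian density, and the strip itself has one-dimensional Lebesgue measure $b-a$.

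The final step is to convert these pointwise bounds into the trace-norm estimate. For this I would invoke the Hilbert-Schmidt bounds on $\Gamma_{1/2}\mathbf{S}_{1/2,\pm 1}$ developed in \cite{matetski2016kpz}: the super-exponential growth of $\Gamma_{1/2}(z) = \exp(\kappa_{1/2}\operatorname{sgn}(z)|z|^{3/2})$ at $z > 0$ is tamed by the Airy decay of $\mathbf{S}_{1/2,\pm 1}$, and the decay of $\Gamma_{1/2}$ at $z < 0$ compensates for the linear-exponential $e^{-zx/t}$ factor in $\mathbf{S}_{1/2,\pm 1}$ on the negative side. Via the estimate $\|A\Delta B\|_1 \le \|A\|_2\|\Delta\|_{\operatorname{op}}\|B\|_2$ (or a direct integral estimate splitting $\Delta_{a,b}$ into its bulk and boundary-strip pieces), the bulk contribution to the trace norm is $O(b-a)$, while the boundary-strip contribution gives $(b-a)$ from the strip width together with a single exponential $e^{c(a^-)^{3/2}}$ coming from the growth of the weighted $L^2$-norms of $\Gamma_{1/2}\mathbf{S}_{1/2,\pm 1}$ restricted to the strip $u \in (-b,-a)$ as $a \to -\infty$.

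The main obstacle will be the boundary-strip estimate: there the reflection principle produces no $(b-a)$ factor on the kernel, so $(b-a)$ must come from the Lebesgue measure of the strip, while the precise exponent $c(a^-)^{3/2}$ must be extracted by carefully tracking how the stretched-exponential growth of $\Gamma_{1/2}$ at positive argument, the exponential $e^{-zx/t}$ factors in $\mathbf{S}_{1/2,\pm 1}$, and the Airy asymptotics $\operatorname{Ai}(-t^{1/3}z + t^{-4/3}x^2)$ interact at $u \sim -a$ to produce exactly the claimed factor.
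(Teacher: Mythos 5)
Your factorization and reflection-principle computation match the paper, but the trace-norm step has a genuine gap. The operators $\Gamma_{1/2}(\mathbf{S}_{1/2,-1})^*$ and $\mathbf{S}_{1/2,-1}\Gamma_{1/2}$ are \emph{not} Hilbert--Schmidt, so the bound $\|A\Delta B\|_1 \le \|A\|_2\,\|\Delta\|_{\operatorname{op}}\,\|B\|_2$ cannot be applied. To see this, write $R_w(t) = \Gamma_{1/2}(t)\mathbf{S}_{1/2,-1}(w,t)$; then $\|A\|_2^2 = \int \|R_w\|_2^2\,dw$, and the proof of the lemma itself shows $\|R_w\|_2^2 \le c\exp(2w + c(w^+)^{3/2})$, which grows without bound as $w\to+\infty$, so $\|A\|_2 = \infty$. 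What makes $\mathbf{K}^{\operatorname{hypo}}_{1/2}$ trace class is not that the two outer factors are individually Hilbert--Schmidt, but that the Gaussian decay of the inner factor $\mathbf{P}^{\operatorname{No\;hit}}$ in $w+v$ compensates the growth of $\|R_w\|_2\|R_v\|_2$. Your decomposition factors this decay out of the outer operators and throws it away, which is exactly why the argument cannot close. (The same problem also affects the "boundary-strip" piece on its \emph{unbounded} side: $\mathbf{1}_{v\ge -a}\,\mathbf{S}_{1/2,-1}\Gamma_{1/2}$ is still not Hilbert--Schmidt.)

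The paper sidesteps this by keeping the decay of $\mathbf{P}_{a,b}$ glued to the growth of $\|R_w\|_2\|R_v\|_2$: since $\mathbf{P}_{a,b}(w,v)\ge 0$, one reads the kernel of $\mathbf{O}_{a,b}$ as a superposition of rank-one operators $R_w\otimes R_v$ integrated against the positive measure $\mathbf{P}_{a,b}(w,v)\,dw\,dv$, giving
\[
\|\mathbf{O}_{a,b}\|_1 \le \int \|R_w\|_2\,\|R_v\|_2\,\mathbf{P}_{a,b}(w,v)\,dw\,dv,
\]
and the Gaussian in $\mathbf{P}_{a,b}$ then makes the integral converge and produces exactly the $(b-a)e^{c(a^-)^{3/2}}$ you want (the $(b-a)$ coming both from the mean value theorem on the overlap region and from the width of the strip, matching your intuition). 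To repair your argument you would need to carry a weight depending on $w+v$ across the factorization (e.g.\ insert $e^{-\eta(w+v)^2}$ and $e^{+\eta(w+v)^2}$ on either side of $\Delta_{a,b}$, or restrict to bounded shells and sum), but at that point you have essentially recovered the paper's rank-one integral. One small side remark: the second $\mathbf{S}$-factor should be $\mathbf{S}_{1/2,-1}$, not $\mathbf{S}_{1/2,1}$, since \eqref{E:kpzhypof} uses $\mathbf{S}_{t,-\ell_2}$ with $\ell_2 = 1$.
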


\begin{proof}[Proof of Proposition \ref{P:kpz-fp-prop} given Lemma \ref{L:crux-lemma}]
	In \cite[Appendix A.1]{matetski2016kpz}, the authors show that there exists $c > 0$ such that for any $h_0 \le 0$ we have
	\begin{equation}
	\label{E:hypo-bound}
	\|\mathbf{K}^{\operatorname{hypo}(h_0)}_{1/2}\|_1 \le c.
	\end{equation}
	Therefore by \eqref{E:AB-A-B} and \eqref{E:det-cty}, for any $M= M(h_0)$ with $h_0 \le 0$ we have the estimate
	\begin{align*}
	&|F_{M}(b) - F_{M}(a)| 
	\le c \|\mathbf{K}^{\operatorname{epi}(g_b)}_{-1/2} -\mathbf{K}^{\operatorname{epi}(g_a)}_{-1/2}\|_1 \exp \lf(1 + c\|\mathbf{K}^{\operatorname{epi}(g_b)}_{-1/2}\|_1 + c\|\mathbf{K}^{\operatorname{epi}(g_a)}_{-1/2}\|_1 \rg).
	\end{align*}
	Next, by \eqref{E:AB-A-B}, the definition \eqref{E:bfK-epi} and the fact that $\|\rho\|_{\operatorname{op}} =1$, we have the same inequality with $\mathbf{K}^{\operatorname{epi}(g_b)}_{-1/2}, \mathbf{K}^{\operatorname{epi}(g_a)}_{-1/2}$ replaced by $\mathbf{K}^{\operatorname{hypo}(-g_b)}_{1/2}, \mathbf{K}^{\operatorname{hypo}(-g_a)}_{1/2}$. Proposition \ref{P:kpz-fp-prop} then follows from Lemma \ref{L:crux-lemma} and the bound \eqref{E:hypo-bound} which gives that $\|\mathbf{K}^{\operatorname{hypo}(-g_a)}_{1/2}\|_1 \le c$ for any $a \ge 0$.
\end{proof}

\begin{proof}[Proof of Lemma \ref{L:crux-lemma}]
	The method here is the same as in \cite[Appendix A.1]{matetski2016kpz}. There are only superficial differences in the required calculations.
	Let $\mathbf{O}_{a, b} := \mathbf{K}^{\operatorname{hypo}(-g_a)}_{1/2} -\mathbf{K}^{\operatorname{hypo}(-g_b)}_{1/2}$. We have
	$
	\mathbf{O}_{a, b} = \Ga_{1/2} (\mathbf{S}_{1/2, -1})^* \mathbf{P}_{a, b} \mathbf{S}_{1/2, -1} \Ga_{1/2}
	$
	where 
	$$
	\mathbf{P}_{a, b} = \mathbf{P}_{-1, 1}^{\operatorname{No \; hit} (-g_b)} - \mathbf{P}_{-1, 1}^{\operatorname{No \; hit} (-g_a)}.
	$$
	Since $g_a, g_b$ are constant on $[-1, 1]$, the kernel of $\mathbf{P}_{a, b}$ can be computed explicitly by the reflection principle:
	\begin{align*}
	\mathbf{P}_{a, b}(w, v) &= \mathbf{1}(w \wedge v > -a) \frac{1}{\sqrt{8\pi}} \lf(e^{-(w + v + 2a)^2/8} - e^{-(w + v + 2b)^2/8}\rg) \\
	&+\mathbf{1}(w \wedge v \in (-b, -a]) \frac{1}{\sqrt{8\pi}} \lf(e^{-(w - v)^2/8} - e^{-(w + v + 2b)^2/8}\rg).
	\end{align*} 
	Now, the key idea is to view $\mathbf{O}_{a,b}$ as an integral of rank one operators integrated against the positive measure $\mathbf{P}_{a, b}(w, v)dwdv$. Indeed, we can write out the kernel of $\mathbf{O}_{a,b}$ as:
	\begin{equation*}
	\mathbf{O}_{a, b}(t, u) =  \int \Ga_{1/2}(t) \mathbf{S}_{1/2, -1}(w, t) \Ga_{1/2}(u) \mathbf{S}_{1/2, -1}(v, u)  \mathbf{P}_{a, b}(w, v)  dwdv.
	\end{equation*}
	Let $\mathbf{R}_{w, v}$ be the operator with kernel
	$$
	\mathbf{R}_{w, v}(t, u) := R_w(t) R_v(u), \quad \text{ where } \quad R_w(t) = \Ga(t) \mathbf{S}_{1/2, -1}(w, t).
	$$
	We can then upper bound $\|\mathbf{O}_{a, b}\|_1$ by an integral of the trace norms of $\mathbf{R}_{w, v}$:
	\begin{equation}
	\label{E:Oab-bound}
	\|\mathbf{O}_{a, b}\|_1 \le \int \|\mathbf{R}_{w, v}\|_1 \mathbf{P}_{a, b}(w, v) dw dv = \int \|R_w\|_2 \|R_v\|_2 \mathbf{P}_{a, b}(w, v) dw dv.
	\end{equation}
	For the second equality we have used that $\mathbf{R}_{v, w}$ is rank one, and hence its trace norm is simply the product of the $L^2(\R)$-norms of the functions $R_v, R_w$. At this point it just remains to estimate the final integral. In the remainder of the proof the constant $c > 0$ depends only on $\kappa_{1/2}$ and may change from line to line.
	
	To bound $\|R_w\|_2$ we will use the standard Airy function bound
	$\operatorname{Ai}(z) \le c\exp(-\frac{2}{3}(z^+)^{3/2})$, see \cite{abramowitz1972handbook}, formulas 10.4.59–10.4.60. This yields the bound
	\begin{align}
	\nonumber
	\|R_w\|^2_2 &\le  c \int_\R \exp (2 \operatorname{sgn}(t)\ka_{1/2} |t|^{3/2} -4(t - w) - \frac{2\sqrt{2}}{3}((t-w)^+)^{3/2}) dt.
	\end{align}
	As long as $\ka_{1/2} < \sqrt{2}/3$,  this gives the bound
	$$
	\|R_w\|^2_2 \le c \exp(2w + c (w^+)^{3/2}).
	$$
	We can now turn to bounding the right side of \eqref{E:Oab-bound}. We first look at at the portion of the integral where $w \wedge v \in (-b, -a]$. Using the fact that the integrand is symmetric in $w$ and $v$ and making the change of variables $w \mapsto x = w - v$, we can write
	\begin{align*}
	\int_{w \wedge v \in (-b, -a)} \|R_w\|_2 \|R_v\|_2 \mathbf{P}_{a, b}(w, v) dw dv = 2\int_{-b}^{-a} \int_0^\infty \|R_{x + v}\|_2 \|R_v\|_2 \mathbf{P}_{a, b}(x + v, v) dx dv.
	\end{align*}
	Using the bound on $\|R_w\|_2$ and the fact that $\mathbf{P}_{a, b}(x + v, v) \le e^{-x^2/8}$ in the domain of integration, this is bounded above by
	\begin{align*}
	c \int_{-b}^{-a} \int_0^\infty  \exp(2x + 4v + c (v^+)^{3/2} + &c((x + v)^+)^{3/2} - x^2/8) dx dv \\
	&\le c (b-a) \exp(c  (a^-)^{3/2}).
	\end{align*}
	To complete the proof of the lemma, it just remains to prove the same bound on the portion of the integral where $w \wedge v > - a$. In this region we use that
	$$
	\mathbf{P}_{a, b}(w, v) \le c (b- a) e^{-(w + v + 2a)^2/9},
	$$
	which follows from a straightforward bound on the derivative of $e^{-(u_1 + u_2 + 2a)^2/8}$ with respect to $a$. Then using the bound on $\|R_w\|_2$ and the change of variables $x = w + a, y = v +a$ we have
	\begin{align*}
	&\int_{-a}^\infty \int_{-a}^\infty \|R_w\|_2 \|R_v\|_2 \mathbf{P}_{a, b}(w, v) dw dv \\
	&\le c (b - a)\int_0^\infty \int_0^\infty \exp\Big(2(x +y) - 4a + c ((x - a)^+)^{3/2} + c((y - a)^+)^{3/2} -(x + y)^2/9\Big) dx dy \\
	&\le c (b - a) \exp(c (a^-)^{3/2}),
	\end{align*}
	as desired.
\end{proof}

\bibliographystyle{alpha}
\bibliography{bibliography}

\appendix
\section{Proof of Proposition \ref{P:landscape-mixing}}
\label{A:landscape-properties}

For the proof of Proposition \ref{P:landscape-mixing} we will need to approximate the directed landscape with a prelimiting last passage model. We use Poisson last passage percolation to avoid complications from working on a lattice. Let $P_{\la}$ be an intensity-$\la$ Poisson process on $\R^2$. We say that a continuous function $f:[s, t] \to \R$ with $f(s) = x, f(t) = y$ is a path from $p = (x, s)$ to $q =(y, t)$.

For a path $f:[s, t] \to \R$ and parameters $\ell \ge 0, \la, \chi > 0$ define the length
$$
\|f\|_{\la, \ell, \chi} = \frac{\# (P_\la \cap \fg f)}{\chi}  - \frac{\ell (t-s)}{\chi},
$$  
where $\fg f = \{(f(r), r) : r \in [s, t]\}$ is the \textbf{graph} of $f$. Then for $(p, q) \in \Rd$ and $m > 0$ define
$$
d_{m, \la, \ell, \chi}(p, q) = \max \{ \|f\|_{\la, \ell, \chi} : f \text{ is a path from } p \text{ to } q, f \text{ is $m$-Lipschitz}\}.
$$
 We say that an $m$-Lipschitz path $f$ from $p$ to $q$ is a $d_{m, \la, \ell, \chi}$-geodesic if $d_{m, \la, \ell, \chi}(p, q) = \|f\|_{\la, \ell, \chi}$.

The case when $\la = m = \chi = 1$ and $\ell = 0$ gives us the usual definition of Poisson last passage percolation, rotated by $\pi/4$. The introduction of parameters allows us a simple way to introduce rescaling into the setup without shifting the underlying space $\Rd$. We then have the following result.

\begin{theorem}[First part of Corollary 13.12, \cite{dauvergne2021scaling}]
	\label{T:poisson}
For $n \in \N$, let $d^n := d_{n^{1/3}/2, 4 n^{5/3}, 2n, n^{1/3}}$. Then in some coupling we have
$
d^n \to \sL
$
almost surely, uniformly over compact sets in $\Rd$. 
\end{theorem}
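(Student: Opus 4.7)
The plan is to reprove the convergence by verifying that the rescaled Poisson LPP metrics $d^n$ satisfy, in the limit, the three axiomatic properties (I)--(III) that uniquely characterize the directed landscape $\sL$: metric composition, independent time increments, and Airy sheet marginals. The first two properties already hold \emph{exactly} at the prelimit level, so the real content is establishing (III) together with enough tightness to make the axiomatic identification actually pin down the limit.

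First I would check that the chosen scaling is correct. The parameters $(m,\la,\ell,\chi)=(n^{1/3}/2,\;4n^{5/3},\;2n,\;n^{1/3})$ are precisely the KPZ $1{:}2{:}3$ scaling for Poisson LPP rotated by $\pi/4$: lengths of paths between spacetime points separated by time $\Delta t = O(1)$ have mean $\approx \ell \Delta t /\chi$ (subtracted off), transverse fluctuations of order $n^{2/3}$ (enforced by the $m$-Lipschitz constraint, which at these scales is non-binding for typical geodesics), and length fluctuations of order $\chi=n^{1/3}$. Verifying these balances is routine but must be done carefully to see that the subtraction $\ell(t-s)/\chi$ exactly absorbs the leading-order deterministic length.

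The main input is convergence of the one-time marginals to the Airy sheet: for each fixed $s<t$, the random function $(x,y)\mapsto d^n(x,s;y,t)$ should converge in distribution to $\sL(x,s;y,t)$ uniformly on compact sets. I would obtain this by first showing that the single column (point-to-half-line) LPP profile converges to the Airy$_2$ process (this is essentially Johansson's theorem for Poisson LPP), then upgrading to joint convergence of the full sheet via a Brownian Gibbs / line ensemble comparison: the top row of the RSK tableau associated to the Poisson process converges to the parabolic Airy line ensemble, and the Airy sheet can be reconstructed from the Airy line ensemble via a last passage variational formula. The step from line ensemble convergence to sheet convergence is the hardest piece and is where I expect to spend most of the effort.

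Next I would establish tightness of $\{d^n\}$ as random elements of $C(\Rd,\R)$ via a uniform modulus-of-continuity estimate: specifically, a bound of the form
\[
|d^n(u)-d^n(v)| \le C_K\bigl(\tau^{1/3}\log^{2/3}(\tau^{-1}+1)+\xi^{1/2}\log^{1/2}(\xi^{-1}+1)\bigr),
\]
analogous to Proposition \ref{P:modulus-bound}, on compact $K\sset \Rd$, with moment control on $C_K$ uniform in $n$. Such a bound follows from fluctuation estimates for Poisson LPP increments (exponential upper tail for the length, moderate deviation lower tail) combined with a chaining argument, after which the standard Kolmogorov-\v Centsov machinery gives tightness. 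At this point every subsequential distributional limit $\sL^\infty$ of $d^n$ is a continuous random function on $\Rd$ that, by passage through the prelimit, satisfies metric composition (I), time-independent increments (II) (these are exact consequences of the deterministic composition of geodesics in the prelimit and of the independence of the Poisson process on disjoint time strips), and, by the marginal step, the Airy sheet marginals (III). The characterization of $\sL$ then forces $\sL^\infty \eqd \sL$, so $d^n\cvgd \sL$ on $\Rd$. A Skorokhod embedding finally produces the desired coupling on a single probability space in which $d^n\to \sL$ almost surely, uniformly on compact sets, finishing the proof.
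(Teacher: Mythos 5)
You should first note that the paper does not prove this statement at all: it is imported verbatim as the first part of Corollary 13.12 of \cite{dauvergne2021scaling}, so there is no internal argument to compare against, and any proof you give is necessarily a reconstruction of that work. Your outline does follow the same broad strategy as the literature (exact prelimit metric composition and independence of time strips, one-time marginal convergence to the Airy sheet, tightness via a uniform modulus of continuity, identification of subsequential limits through the axiomatic characterization of $\sL$, then Skorokhod representation to upgrade to an almost sure coupling). The scaling check, the tightness step, and the identification step are all sound in principle, modulo one technical point you omit: to pass metric composition to the limit you must localize the supremum over $z\in\R$ to a compact set uniformly in $n$, which requires parabolic decay estimates for $d^n$ that are uniform in $n$, not just the modulus bound on compact sets.

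The genuine gap is the step you yourself flag as hardest, the joint two-parameter convergence $(x,y)\mapsto d^n(x,s;y,t)\cvgd \sS$. You present this as an assembly of known components: Airy$_2$ convergence of the point-to-point profile, convergence of the top lines to the parabolic Airy line ensemble, and then ``reconstruction'' of the Airy sheet via a last passage variational formula over the ensemble. But the Airy sheet is not an explicit functional of the Airy line ensemble in any way that makes this upgrade routine: its construction in \cite{DOV} proceeds through last passage percolation across the ensemble from points sent to $-\infty$, geodesic coalescence, and a uniqueness-of-coupling argument, and transferring this two-parameter convergence to Poisson last passage percolation is precisely the main content of \cite{dauvergne2021scaling} (of which the quoted statement is a corollary). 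As written, your proposal invokes the theorem's core as a single sentence, so while the skeleton is the correct one, it is an outline of the published program rather than a proof; the marginal step would need either the full coupling/coalescence machinery of \cite{DOV, dauvergne2021scaling} or a genuinely new argument, neither of which is supplied.
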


In the context of Theorem \ref{T:poisson}, we can also understand what happens to geodesics in the limit. For a path $f:[s, t] \to \R$, define its $\sL$-length
\begin{equation*}
\|f\|_\sL = \inf_{k \in \N} \inf_{s = r_0 < r_1 < \dots < r_k = t} \sum_{i=1}^k \sL(f(r_{i-1}), r_{i-1}; f(r_i), r_i).
\end{equation*}
By the triangle inequality \eqref{E:triangle-ineq}, for any path $f$ from $p$ to $q$ we have $\|f\|_\sL \le \sL(p, q)$. We call $f$ an $\sL$-geodesic if $\|f\|_\sL = \sL(p, q)$. Note that the triangle inequality \eqref{E:triangle-ineq} becomes an equality for a triple of points on a common geodesic. 

\begin{theorem}[Second part of Corollary 13.12, \cite{dauvergne2021scaling}]
	\label{T:poisson-geods}
In the coupling in Theorem \ref{T:poisson-geods}, almost surely the following holds. If $f_n$ is any sequence of $d^n$-geodesics from $p_n$ to $q_n$ and $(p_n, q_n) \to (p, q)$ in $\Rd$ then the graphs $\fg f_n$ are precompact in the Hausdorff topology and any subsequential limit $\Ga$ of $\fg f_n$ is the graph of an $\sL$-geodesic from $p$ to $q$.
\end{theorem}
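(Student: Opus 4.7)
The plan is to prove the theorem in two stages: first, establish precompactness of $\{\fg f_n\}$ by forcing the graphs to lie in a common compact subset of $\R^2$, and second, identify any Hausdorff subsequential limit $\Ga$ as the graph of a continuous function $f$ that is an $\sL$-geodesic from $p$ to $q$.

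For precompactness, the key input I would use is a uniform parabolic upper bound on $d^n$, i.e.\ a prelimiting version of Proposition \ref{P:corollary107}. Concretely, for every compact $K \sset \Rd$ there should exist a tight family of random constants $C_n$ with
$$
d^n(x, s; y, t) \le -\frac{(x-y)^2}{t-s} + C_n\bigl((t-s)^{1/3} + 1\bigr)\log^{4/3}\bigl(2 + \|(x,s;y,t)\|_2\bigr)
$$
for all $(s, t)$ in the time range of $K$ and all $x, y \in \R$. Passing to a subsequence where $C_n$ is bounded, the metric composition identity for $d^n$-geodesics gives
$$
d^n(p_n, q_n) = d^n(p_n; z_n) + d^n(z_n; q_n), \qquad z_n := (f_n(r_n), r_n),
$$
for any $r_n \in [s_n, t_n]$. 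Since the left side converges to $\sL(p, q)$ by Theorem \ref{T:poisson}, the upper bound forces the quadratic penalty at $z_n$ to stay bounded, hence $f_n(r_n)$ stays bounded uniformly in $n$ and $r_n$. Combined with the convergence of time endpoints, this places all $\fg f_n$ in a common compact rectangle, which is Hausdorff-precompact.

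For the characterization step, let $\Ga = \lim_k \fg f_{n_k}$. The boundary points $p, q \in \Ga$ come from $(p_{n_k}, q_{n_k}) \to (p, q)$. To see that $\Ga$ is the graph of a continuous function $f:[s,t]\to \R$ with $f(s)=x, f(t)=y$, I would rule out vertical segments via the same parabolic upper bound applied on short time windows: a macroscopic spatial jump of $f_{n_k}$ across a window of length $\de$ would force $d^{n_k}(f_{n_k}(r_{i-1}), r_{i-1}; f_{n_k}(r_i), r_i) \to -\infty$ on that window, contradicting $\|f_{n_k}\|_{d^{n_k}} = d^{n_k}(p_{n_k}, q_{n_k}) \to \sL(p,q)$. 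Once $\Ga$ is a function graph, the geodesic property falls out easily: for any partition $s = r_0 < r_1 < \dots < r_k = t$, the $d^{n_k}$-geodesic property and metric composition yield
$$
d^{n_k}(p_{n_k}, q_{n_k}) = \sum_{i=1}^k d^{n_k}(f_{n_k}(r_{i-1}), r_{i-1}; f_{n_k}(r_i), r_i),
$$
and passing to the limit with uniform-on-compact convergence $d^{n_k} \to \sL$ plus $f_{n_k}(r_i) \to f(r_i)$ gives
$$
\sL(p, q) = \sum_{i=1}^k \sL(f(r_{i-1}), r_{i-1}; f(r_i), r_i) \ge \|f\|_\sL,
$$
while the reverse inequality $\|f\|_\sL \le \sL(p, q)$ holds automatically by the triangle inequality \eqref{E:triangle-ineq}.

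The main obstacle is the uniform parabolic upper bound on the $d^n$, which underlies both the precompactness argument and the ruling out of vertical segments. Unlike for the limiting object $\sL$, where Proposition \ref{P:corollary107} is available, in the prelimit one must produce parabolic tail bounds on Poisson last passage times that are uniform in $n$ and decay fast enough in the spatial variables, and doing this cleanly (with tight constants) is the usual technical burden in scaling-limit statements of this kind; the remaining passages to the limit are then relatively soft continuity arguments.
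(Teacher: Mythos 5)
The paper never proves this statement: it is quoted directly as Corollary 13.12 of \cite{dauvergne2021scaling} (note the statement's ``coupling in Theorem \ref{T:poisson-geods}'' is a self-reference that should read Theorem \ref{T:poisson}), so there is no internal argument to measure your write-up against; the relevant comparison is with the proof in that reference, whose overall strategy --- uniform-on-compact convergence of $d^n$, parabolic upper bounds in the prelimit, additivity of passage values along geodesics, and exclusion of time-collapsed spatial jumps --- your outline does reconstruct in broad strokes.

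As a proof, however, your proposal has a genuine gap, and you name it yourself: the uniform-in-$n$ parabolic upper bound on $d^n$, valid for all $x, y \in \R$ and down to short time windows, is assumed rather than established, and it is precisely the load-bearing input for both of your steps (confining the graphs $\fg f_n$ to a common compact rectangle, and killing macroscopic jumps over windows of length $\de$ via the $-\eta^2/\de$ penalty). Such a bound cannot be deduced from Theorem \ref{T:poisson}, since uniform-on-compact convergence gives no control of $d^n$ at large spatial displacement or at small temporal separation --- exactly the regimes you invoke --- so it requires genuine prelimit estimates for Poisson last passage percolation (concentration or moderate-deviation bounds together with a discretization/chaining argument over $\Rd$), which is where essentially all of the technical work in \cite{dauvergne2021scaling} sits. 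Two smaller remarks: the bound you wrote down has an error term with no blow-up as $t - s \to 0$, which is stronger than even the limiting estimate of Proposition \ref{P:corollary107} (whose logarithm carries a factor $s^{-1}$) and stronger than needed --- any version whose error grows slower than $\eta^2/\de$ as $\de \to 0$ suffices, and that is what one should aim to prove; and your closing sentence about the ``reverse inequality'' is redundant, since once the limit gives that every partition sum equals $\sL(p,q)$, the identity $\|f\|_\sL = \sL(p,q)$ follows immediately from the definition of $\|f\|_\sL$ as an infimum over partitions.
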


We can use these two convergence theorems to prove Proposition \ref{P:landscape-mixing}.

\begin{proof}
For every $n$ define $k + 1$ intensity $4n^{5/3}$-Poisson processes, $P^n_0, P^n_1, \dots, P^n_k$, coupled so that $P^n_1, \dots, P^n_k$ are independent, 
\begin{equation}
\label{E:Pn0}
P^n_0|_{[i-1/3, i + 1/3] \X [0, 1]} = P^n_i |_{[i-1/3, i + 1/3] \X [0, 1]}
\end{equation}
for all $i \ge 1$, and $P^n_0$ is independent of $P^n_1, \dots, P^n_k$ outside of $\bigcup_{i=1}^k [i-1/3, i + 1/3] \X [0, 1]$. Let $d^n_i$ be the versions of $d_{n^{1/3}/2, 4 n^{5/3}, 2n, n^{1/3}}$ defined using the different $P^n_i$. Also, for every $i = 1, \dots, k$ and $j \in \N$ for a directed landscape $\sL$ let
\begin{align*}
G_{i, j}(\sL) = \sup \{|g(x) - i|: g \text{ is an}& \text{ $\sL$-geodesic between } (x, s), (y, t) \\ \text{ with } &(x, s; y, t) \in K_{i, 1/j}\}. 
\end{align*}
Here $K_{i, 1/j}$ is as in the statement of Proposition \ref{P:landscape-mixing}.
We similarly define $G_{i, j}(d^n_{i'})$ for $i' = 0, \dots, k$. Now, the decay bound in Proposition \ref{P:corollary107} and the fact that the triangle inequality is an equality along geodesics implies that 
$G_{i, j}(\sL) \to 1/4$ almost surely as $j \to \infty$ for all $i$. Also, Theorem \ref{T:poisson-geods} implies that
\begin{equation}
\label{E:Gsup}
\limsup_{n \to \infty} G_{i, j}(d^n) \le G_{i, j}(\sL), \qquad \text{ and so } \qquad \limsup_{j \to \infty} \limsup_{n \to \infty} G_{i, j}(d^n) \le 1/4.
\end{equation}
almost surely for all $j$ in the coupling from that theorem.

Now, using Theorem \ref{T:poisson} and \eqref{E:Gsup} the random variables $G_{i, j}(d^n_{i'}), d^n_{i'}(u), i \in \{1, \dots, k\}, i' \in \{0, \dots, k\}, u \in \Q^4 \cap \Rd$ are jointly tight. Therefore we can find a subsequence and a coupling where all these random variables converge almost surely along that subsequence. 

By Theorem \ref{T:poisson}, in this coupling there are directed landscapes $\sL_0, \dots, \sL_k$ such that $d^n_{i}(u) \to \sL_i(u)$ for all $u \in \Q^4 \cap \Rd$ and all $i = 0, \dots, k$. The landscapes $\sL_1, \dots, \sL_k$ are independent. Also, by \eqref{E:Gsup} there exists a random $J$ such that for all $i, i'$ we have $G_{i, J}(d^n_{i'}) \le 1/3$ for all large enough $n$. Therefore by \eqref{E:Pn0} we have $d^n_0 = d^n_i$ on $K_{i, 1/J}$ for all $i \in \{1, \dots, k\}$, and so $\sL_0 = \sL_i$ on $K_{i, 1/J}$ for all $i \in \{1, \dots, k\}$, as desired. 
\end{proof}
	
\section{Facts about Bessel processes}
\label{A:bessel-properties}

\begin{lemma}
	\label{L:bessel-1}
	Let $R_a:[0,\infty) \to [0, \infty)$ be a Bessel-$3$ process with $R_a(0) = a$ and define the random variable
	$$
	X_a:=\inf_{x \ge 0} R_a(x) - x^{1/4} - \sqrt{a}
	$$
	Then the family of random variables $X_a \wedge 0, a \ge 0$ is tight.
\end{lemma}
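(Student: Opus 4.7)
The plan is to realize $R_a$ as the norm of a shifted three-dimensional Brownian motion, $R_a(x) = \|B(x) + a e_1\|_2$, where $B$ is a $3$-dimensional Brownian motion with variance $2$ and $e_1$ is the first standard basis vector, and then to treat small $a$ and large $a$ separately.

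For $a$ in any bounded range $[0, A_0]$, the reverse triangle inequality gives the pathwise bound $R_a(x) \ge \|B(x)\|_2 - a = R_0(x) - a$, so $X_a \ge X_0 - A_0 - \sqrt{A_0}$, and tightness on this range reduces to the lower tail of the single random variable $X_0$. I would control that tail by a dyadic decomposition: the event $\{R_0(x) < x^{1/4} - M\}$ is vacuous for $x \le M^4$, and on each block $[2^k, 2^{k+1}]$ with $2^k \ge M^4/2$ the standard Bessel-$3$ hitting estimate
\[
\P\lf(\inf_{x \in [2^k, 2^{k+1}]} R_0(x) < c \mid R_0(2^k)\rg) \le (c / R_0(2^k)) \wedge 1
\]
(which follows from $1/R$ being a local martingale), together with the identity $\E[1/R_0(t)] = 1/\sqrt{\pi t}$, produces a convergent geometric series of total size $O(1/M)$, giving $\P(X_0 < -M) = O(1/M)$ and hence tightness of $X_a \wedge 0$ on $a \in [0, A_0]$.

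For $a \ge A_0$ large, it is enough to show $\P(X_a < 0) \to 0$ as $a \to \infty$, since $\{X_a \wedge 0 < -M\} \subset \{X_a < 0\}$ for any $M > 0$. On $[0, 1]$ the bound $R_a(x) \ge a - \sup_{s \in [0, 1]} \|B(s)\|_2$ combined with Doob's inequality for Brownian motion handles the contribution with a Gaussian tail in $a$. On each dyadic block $[2^k, 2^{k+1}]$ with $k \ge 0$ I would apply the Bessel-$3$ hitting estimate above with $c_k = 2^{(k+1)/4} + \sqrt{a}$, and use
\[
\E[1/R_a(t)] = \frac{2\Phi(a/\sqrt{2t}) - 1}{a} \le \min\lf(\frac{1}{a}, \frac{1}{\sqrt{\pi t}}\rg),
\]
which follows from integrating the shifted three-dimensional Gaussian density in spherical coordinates. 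The main source of care is that both the threshold $c_k$ and the bound on $\E[1/R_a(2^k)]$ change behavior around $2^k \asymp a^2$, so the sum must be split into $2^k \le a^2$ (where $1/a$ is efficient) and $2^k \ge a^2$ (where $1/\sqrt{\pi 2^k}$ is); once the two regimes are separated each becomes a convergent geometric series and the total vanishes as $a \to \infty$.
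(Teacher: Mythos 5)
Your proof is correct and takes a genuinely different route from the paper's. The paper's proof rests on two structural facts: that $\min_{x \ge t} R_a(x)$ given $R_a(t)$ is uniform on $[0, R_a(t)]$, and that $R_a$ is stochastically increasing in $a$. It splits the infimum at $x = a^2$, controls the near region via $\min_{x \ge 0} R_a \eqd aU$, and controls the far region by monotone coupling to the $a = 0$ case, where it only needs the qualitative fact that $\inf_{x \ge 0} R_0(x) - 2x^{1/4}$ is finite a.s.\ (proved by yet another uniform-minimum dyadic argument). Your proof instead works with the $1/R$ local-martingale hitting estimate $\P(\inf_{x \ge t} R_a(x) < c \mid R_a(t)) = (c/R_a(t)) \wedge 1$, the explicit non-central formula $\E[1/R_a(t)] = (2\Phi(a/\sqrt{2t}) - 1)/a$, and the reverse triangle inequality coupling $R_a(x) \ge R_0(x) - a$ for bounded $a$; for $a$ large you reduce to showing $\P(X_a < 0) \to 0$, which neatly sidesteps any need to control both $M$ and $a$ simultaneously. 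Both proofs are sound, but yours is the more quantitative: the dyadic argument with $\E[1/R_0(t)] = 1/\sqrt{\pi t}$ gives $\P(X_0 < -M) = O(1/M)$, which is sharper than the $O(M^{-1/4})$ the paper records in its companion Lemma~\ref{L:X0-explicit-bound} by the uniform-minimum route. Two small points worth tightening in a write-up: in the large-$a$ regime with $2^k \le a^2$, the $\sqrt{a}/a$ contribution from $c_k$ is not itself geometric in $k$ but accumulates to $O(\log a / \sqrt{a})$, which still vanishes; and the threshold computation on block $[2^k, 2^{k+1}]$ should use $\sup_{x \in [2^k, 2^{k+1}]} x^{1/4} = 2^{(k+1)/4}$ rather than the endpoint $2^{k/4}$, which you do implicitly via the $2^k \ge M^4/2$ cutoff.
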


\begin{proof}
	To prove the lemma, we will use that conditional on $R_a(t)$, the minimum of $R_a$ on $[t, \infty)$ is uniform on $[0, R_a(t)]$, see \cite[Chapter VI, Corollary 3.4]{revuz2013continuous}. Combining this with the representation of $R_a$ as the magnitude of a $3$-dimensional Brownian motion (of variance $2$) started at the point $(a, 0, 0)$ gives that for fixed $t \ge 0$,
	\begin{equation}
	\label{E:normal-rep}
\min_{x \ge t} R_a(x) \eqd U \sqrt{(N_1 + a)^2 + N_2^2 + N_3^2}
	\end{equation}
	where $U \sim U(0, 1), N_i \sim N(0, 2t)$ and all random variables are independent.
	 
	We first use this to show $\tilde X_0 :=\inf_{x \ge 0} R_0(x) - 2x^{1/4} \ne -\infty$ almost surely. Consider the sequence of random variables
	$$
	I_n = \min_{x \in [2^n, \infty)} 2^{-n/2} R_0(x), \qquad n = 0, 1, 2, \dots
	$$
	By \eqref{E:normal-rep}, this is a sequence of identically distributed non-negative random variables with Lebesgue density bounded above. Therefore by a union bound,
	$
	I :=\inf_{n \ge 1} n^2 I_n
	$
	is non-zero almost surely, and hence so is the random variable $\inf_{x \ge 1} \log^2(x) x^{-1/2} R_0(x)$. Noting also that $\inf_{x \in [0, 1]} R_0(x) - 2x^{1/4} \ge -2$ gives that $\tilde X_0 \ne -\infty$. 
	
	Now, for $a \ge 0$ define
	$$
	X^1_a := \inf_{x \le a^2} R_a(x) - 2\sqrt{a}, \qquad X_a^2:= \inf_{x \ge a^2} R_a(x) - 2x^{1/4}
	$$
	so that $X_a \ge X^1_a \wedge X^2_a$. By \eqref{E:normal-rep}, we have $\min_{x \ge 0} R_a(x) \eqd a U$, so
	$
	aU - 2\sqrt{a} \preceq X^1_a$
	where $\preceq$ is stochastic ordering. The random variables $(aU - 2\sqrt{a}) \wedge 0, a \ge 0$ are tight, and hence so are the random variables $X^1_a \wedge 0, a \ge 0$. Also, since the Bessel processes $R_a$ are stochastically increasing in $a$, the random variables $X^2_a \wedge 0, a \ge 0$ are tight since $X^2_0 = \tilde X_0$ is not $-\infty$ almost surely.
	\end{proof}

\begin{lemma}
	\label{L:X0-explicit-bound}
	Let $R:[0, \infty) \to [0, \infty)$ be a Bessel-$3$ process started at $0$, and let $X_0 = \inf_{x \ge 0} R(x) - x^{1/4}$. Then for some $c > 0$ we have
	$$
	\P(X_0 < -m) \le cm^{-1/4}.
	$$
\end{lemma}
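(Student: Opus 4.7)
The plan is to reduce to a quantitative version of the one-point estimate $\P(\min_{x \ge t} R(x) < c) \lesssim c/\sqrt{t}$ used implicitly in the proof of Lemma~\ref{L:bessel-1}, combined with a dyadic decomposition of the half-line $\{x \ge m^4\}$. Since $R \ge 0$ and $x^{1/4} \le m$ for $x \in [0, m^4]$, the event $\{X_0 < -m\}$ forces the existence of some $x > m^4$ with $R(x) < x^{1/4} - m$. I would decompose $[m^4, \infty) = \bigcup_{n \ge 0} [2^n m^4, 2^{n+1} m^4]$, and on the $n$-th block use the crude bound $x^{1/4} \le 2^{(n+1)/4} m$ to obtain
\[
\{X_0 < -m\} \sset \bigcup_{n \ge 0} \Big\{\min_{x \ge 2^n m^4} R(x) < 2^{(n+1)/4} m\Big\}.
\]

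For each fixed $t > 0$, I would bound $\P(\min_{x \ge t} R(x) < c)$ by conditioning on $R(t)$ and using the standard fact (invoked in the proof of Lemma~\ref{L:bessel-1}) that this conditional minimum is uniform on $[0, R(t)]$. This gives the bound $c\cdot\E[1/R(t)]$, and a direct chi-$3$ density computation using the variance-$2$ convention $R(t) \eqd \sqrt{2t}\,\chi_3$ yields $\E[1/R(t)] = (\pi t)^{-1/2}$, so $\P(\min_{x \ge t} R(x) < c) \le c/\sqrt{\pi t}$.

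Combining the two steps via a union bound, the $n$-th term becomes a constant multiple of $2^{(n+1)/4} m / \sqrt{2^n m^4} = O(2^{-n/4}/m)$, and summing the resulting geometric series yields the universal bound $\P(X_0 < -m) \le C/m$, which is in fact stronger than the stated $c m^{-1/4}$ and hence suffices for the application in the proof of Lemma~\ref{L:noniterated-log}.

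I do not anticipate any serious obstacle; this is a routine Bessel-$3$ tail computation that quantifies the qualitative finiteness already established in Lemma~\ref{L:bessel-1}. The only points of care are tracking the variance-$2$ normalization in $\E[1/R(t)]$, and verifying that the dyadic sum converges, which comes down to the elementary inequality $1/4 < 1/2$ between the growth exponent $x^{1/4}$ in the definition of $X_0$ and the fluctuation exponent $\sqrt{t}$ of the Bessel minimum.
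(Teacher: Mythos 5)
Your proof is correct, and it is structurally the same dyadic-block argument as the paper's, resting on the same key fact that the future minimum of a Bessel-$3$ process, conditionally on its present value, is uniform on $[0, R(t)]$. The one genuine difference is that you exploit the positivity of $R$ to note that $\{X_0 < -m\}$ forces the offending $x$ to satisfy $x > m^4$, and accordingly you begin the dyadic decomposition at $m^4$. The paper instead starts its dyadic sum at index $\fl{\log_2 m}$ (i.e.\ at $x \approx m$), which still covers the event — since $x > m^4 \ge m$ for $m \ge 1$ — but wastes the factor $m^3$ between where the blocks begin and where the event can actually occur. Starting at $t \approx m^4$ turns the geometric sum into $O(m^{-1})$ rather than the paper's $O(m^{-1/4})$, so your bound is strictly stronger. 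Your explicit computation $\E[1/R(t)] = (\pi t)^{-1/2}$ under the variance-$2$ convention is correct and plays the role of the paper's softer appeal to the bounded Lebesgue density of the normalized minima $I_n$ from Lemma~\ref{L:bessel-1}. Either bound is more than enough for the application in Lemma~\ref{L:noniterated-log}.
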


\begin{proof}
	Using the notation from the previous proof, by a union bound we have
	\begin{align*}
\P(X_0 < -m) \le \sum_{n= \fl{\log_2 m}}^\infty \P(\min_{y \in [2^n, 2^{n+1}]} R_0(x) - 2^{(n+1)/4} < 0) \le \sum_{n= \fl{\log_2 m}}^\infty \P(2^{n/2} I_n < 2^{(n+1)/4}).
	\end{align*}
	This is bounded above by $c m^{-1/4}$ since the $I_n$ are identically distributed with Lebesgue density bounded above by \eqref{E:normal-rep}.
\end{proof}

\end{document}